\newcommand{\bel}[1]{\begin{equation}\label{#1}}
\newcommand{\be}{\begin{equation}}
\newcommand{\ba}{\begin{eqnarray}}
\newcommand{\ea}{\end{eqnarray}}
\newcommand{\qe}{\end{equation}}
\newcommand{\N}{{\mathbb N}}
\newcommand{\C}{{\mathbb C}}
\newcommand{\Q}{{\mathbb Q}}
\DeclareMathOperator{\Alb}{Alb}
\DeclareMathOperator{\Image}{Im}
\DeclareMathOperator{\Pic}{Pic}
\DeclareMathOperator{\Ker}{Ker}
\DeclareMathOperator{\codim}{codim}
\DeclareMathOperator{\Supp}{Supp}
\DeclareMathOperator{\GV}{GV}
\DeclareMathOperator{\IT}{IT_0}
\DeclareMathOperator{\Exc}{Exc}
\DeclareMathOperator{\id}{id}
\newcommand{\Hmm}[1]{\leavevmode{\marginpar{\tiny%
$\hbox to 0mm{\hspace*{-0.5mm}$\leftarrow$\hss}%
\vcenter{\vrule depth 0.1mm height 0.1mm width \the\marginparwidth}%
\hbox to
0mm{\hss$\rightarrow$\hspace*{-0.5mm}}$\\\relax\raggedright #1}}}
\theoremstyle{plain}
\newtheorem{thm}{Theorem}[section]
\newtheorem{prop}[thm]{Proposition}
\newtheorem{coro}[thm]{Corollary}
\newtheorem{lemma}[thm]{Lemma}
\theoremstyle{definition}
\newtheorem{defi}[thm]{Definition}
\newtheorem{rem}[thm]{Remark}
\newtheorem*{ac}{Acknowledgements}
\begin{document}

\title[Pushforwards of klt pairs]{Pushforwards of klt pairs under morphisms to abelian varieties}
	
	\author{Fanjun Meng}
	\address{Department of Mathematics, Northwestern University, 2033 Sheridan Road, Evanston, IL 60208, USA}
	\email{fanjunmeng2022@u.northwestern.edu}

	\thanks{2010 \emph{Mathematics Subject Classification}: 14F17, 14E30.\newline
		\indent \emph{Keywords}: generic vanishing, global generation, klt pairs.}

\begin{abstract}
Let $f$ be a morphism from a klt pair $(X, \Delta)$ to an abelian variety $A$, $m\geq1$ a rational number and $D$ a Cartier divisor on $X$ such that $D\sim_{\Q}m(K_X+\Delta)$. We prove that the sheaf $f_*\mathcal{O}_X(D)$ becomes globally generated after pullback by an isogeny and has the Chen-Jiang decomposition, along with some related results. These are applied to some effective results for $\mathcal{O}_X(D)$ when $X$ is irregular.
\end{abstract}

\maketitle
	\setcounter{tocdepth}{1}
	\tableofcontents

\section{Introduction}

In this paper, we prove several results about pushforwards of klt pairs under morphisms to abelian varieties over $\C$ and give some applications. These results are natural generalizations of \cite{LPS20} in the presence of singularities.

Many facts are known about the positivity properties of pushforwards of pluricanonical bundles under morphisms from smooth projective varieties to abelian varieties. For example, they are GV-sheaves by \cite{GL87, Hac04, PP11a, PS14}. Their cohomological support loci are finite unions of torsion subvarieties by \cite{GL91, Sim93, Lai11, LPS20}. They have the Chen-Jiang decomposition by \cite{CJ18, PPS17, LPS20}. For the definitions of the concepts mentioned above, we refer to Section \ref{2}. It is natural to ask what happens if we allow singularities. Our work treats the case of klt pairs.

The first three theorems stated below are the main results of the paper. They are all equivalent by Proposition \ref{equi}. For the first result, the case of pushforwards of canonical bundles and pluricanonical bundles is obtained in \cite{CJ18, PPS17, LPS20} in increasing generality.

\begin{thm}\label{main1}
Let $f$ be a morphism from a klt pair $(X, \Delta)$ to an abelian variety $A$, $m\geq1$ a rational number and $D$ a Cartier divisor on $X$ such that $D\sim_{\Q}m(K_X+\Delta)$. Then there exists an isogeny $\varphi\colon A'\to A$ such that $\varphi^*f_*\mathcal{O}_X(lD)$ is globally generated for every $l\geq 1$.	
	\begin{center}
	\begin{tikzcd}
			X' \arrow[r, "\varphi'"] \arrow[d, "f'"] & X \arrow[d, "f"] \\
			 A' \arrow[r, "\varphi" ] & A 
	\end{tikzcd}
	\end{center}
\end{thm}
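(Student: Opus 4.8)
The plan is to reduce the statement to a known global generation result in the smooth case via a log resolution and the theory of generic vanishing sheaves. I would first take a log resolution $\mu\colon \widetilde{X}\to X$ of $(X,\Delta)$ so that $\mu^*(K_X+\Delta) = K_{\widetilde X} + \widetilde\Delta - E$, where $E$ is an effective $\mu$-exceptional $\Q$-divisor and $\widetilde\Delta\geq 0$ has simple normal crossings support with coefficients in $[0,1)$ (using that $(X,\Delta)$ is klt). The composition $g = f\circ\mu\colon \widetilde X\to A$ is then a morphism from a smooth projective variety to the abelian variety, and the key point is that $f_*\mathcal O_X(lD)$ should coincide, up to the relevant positivity, with a pushforward of the form $g_*\mathcal O_{\widetilde X}(\text{something})$. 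The technical heart is to express $lD$ on $X$ in terms of a divisor on $\widetilde X$ of the shape $K_{\widetilde X} + (\text{SNC fractional part}) + (\text{pullback of a }\Q\text{-divisor from }A)$, possibly after passing to a higher birational model and absorbing round-down/round-up discrepancies; this is precisely the kind of bookkeeping that makes $f_*\mathcal O_X(lD)$ a direct summand (or at least sits inside, with matching base locus) of a sheaf to which the smooth theory of \cite{LPS20} applies.

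Concretely, the second step is to invoke the structural results recalled in Section \ref{2}: pushforwards of pluri-log-canonical sheaves of the form just described are GV-sheaves, their cohomological support loci are finite unions of torsion translates of abelian subvarieties, and — crucially — they admit the Chen-Jiang decomposition. Granting Proposition \ref{equi}, which asserts the equivalence of the three main theorems, it suffices to establish any one of them; I would aim for the Chen-Jiang decomposition statement, since from a decomposition $\mathcal F = \bigoplus_i \big(p_i^*\mathcal F_i \otimes P_i\big)$ into pullbacks of $M$-regular (indeed $\IT$) sheaves twisted by torsion line bundles, global generation after an isogeny is essentially formal: an $M$-regular sheaf on an abelian variety becomes globally generated after pulling back by a suitable multiplication map, the torsion line bundles $P_i$ are trivialized by an isogeny, and one takes a common isogeny $\varphi\colon A'\to A$ dominating all of these. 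The passage to \emph{every} $l\geq 1$ simultaneously requires a uniformity input: one needs a single isogeny that works for all $l$, which should come from the fact that the relevant torsion line bundles and abelian subvarieties appearing across all $l$ are controlled by a fixed piece of data (the Stein factorization / the Albanese-type geometry of $g$), so only finitely many torsion orders actually occur.

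The main obstacle I anticipate is the first step: correctly setting up the birational model and the divisor so that the klt pushforward $f_*\mathcal O_X(lD)$ is genuinely a direct summand of a sheaf covered by the smooth theory, uniformly in $l$. Two subtleties must be handled with care. First, $D$ is only $\Q$-linearly equivalent to $m(K_X+\Delta)$ with $m$ rational, so $lD$ need not be a log-canonical-type divisor on the nose; one must twist by the $\Q$-divisor class realizing the $\sim_\Q$, track it through $\mu$, and arrange the fractional parts to land in an SNC boundary — this may force taking $l$ (or $m$) times a resolution adapted to denominators, and one must check the construction is compatible as $l$ varies. Second, extracting the summand structure: the cleanest route is probably to use that $R^0\mu_*$ of the relevant sheaf on $\widetilde X$ equals $\mathcal O_X(lD)$ (a Kawamata–Viehweg-type vanishing / local vanishing statement for klt pairs) and that $f_*$ then commutes appropriately, so $f_*\mathcal O_X(lD) = g_*(\cdots)$ exactly, not merely as a summand. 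Once that identification is in place, the rest is an application of the cited smooth results together with the standard isogeny-trivialization and $M$-regularity $\Rightarrow$ eventual global generation toolkit, and the proof concludes by choosing $\varphi$ to simultaneously trivialize all occurring torsion bundles and make all occurring $M$-regular summands globally generated.
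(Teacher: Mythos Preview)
Your proposal has a genuine gap at the very first step. Passing to a log resolution $\mu\colon\widetilde X\to X$ does \emph{not} reduce the problem to the smooth theory of \cite{LPS20}: on $\widetilde X$ one has $\mu^*D\sim_{\Q}m(K_{\widetilde X}+\widetilde\Delta')$ for a log smooth klt pair $(\widetilde X,\widetilde\Delta')$, which is still of the form $m(K+\text{boundary})$ with $m>1$ and a nontrivial boundary, not a pluricanonical bundle $mK_{\widetilde X}$. The results of \cite{LPS20} concern $f_*\omega_X^{\otimes m}$ for smooth $X$; they do not directly cover $f_*\mathcal O_X(m(K_X+\Delta))$ even when $(X,\Delta)$ is log smooth. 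Your hoped-for expression ``$K_{\widetilde X}+(\text{SNC fractional part})+(\text{pullback from }A)$'' would require $(m-1)(K_X+\Delta)$ to already be, up to a klt perturbation, a pullback from $A$, which is essentially what one is trying to prove. As the introduction notes, Viehweg's cyclic covering trick, which bridges the smooth pluricanonical case to the $m=1$ case, \emph{fails} here for $m>1$; this is precisely why the paper needs the substantial reductions in Section~\ref{3}.

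The paper's actual route is: first, Lemma~\ref{m=1} handles $m=1$ by a covering trick; then Proposition~\ref{N} (using the technique from \cite[Theorem~1.7]{PS14}, which exploits global generation of $f_*\mathcal O_X(ND)$ to manufacture an auxiliary klt divisor) shows it suffices to treat $f_*\mathcal O_X(ND)$ for a single large $N$; finally Proposition~\ref{SNC} establishes Chen--Jiang for such $N$ by an induction on $\dim A$ that relies on the analytic input of Lemma~\ref{split} (singular Hermitian metrics and the minimal extension property). None of this is a routine resolution-and-cite argument. Separately, your uniformity step is too vague: the paper obtains a single isogeny for all $l\ge1$ from the finite generation of the relative canonical algebra $\bigoplus_i f_*\mathcal O_X(iD)$ via \cite{BCHM}, not from an a priori bound on torsion orders.
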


We first need to do substantial work in Section \ref{3} to perform some reduction steps which imply that it suffices to prove Theorem \ref{main1} for the Cartier divisor $ND$ for some positive integer $N$, in order to deduce it for $D$ itself. Then we adapt the strategy in \cite[Sections 8 and 9]{LPS20} to our setting to prove Proposition \ref{SNC}. The proof relies in part on analytic results based on the theory of singular Hermitian metrics from \cite{CP17, HPS18}. See Lemma \ref{split} and Proposition \ref{SNC} for details. The invariance of plurigenera for smooth families of smooth varieties is used in the proof of the main theorems in \cite{LPS20}. Since we do not know it in our case, we use a different technique to avoid it. See Remark \ref{genera} for details.

Theorem \ref{main2} is a consequence of Theorem \ref{main1} and Proposition \ref{equi}. The case of pushforwards of pluricanonical bundles is obtained in \cite[Theorem A]{LPS20}.

\begin{thm}\label{main2}
Let $f$ be a morphism from a klt pair $(X, \Delta)$ to an abelian variety $A$, $m\geq1$ a rational number and $D$ a Cartier divisor on $X$ such that $D\sim_{\Q}m(K_X+\Delta)$. Then there exists a generically finite surjective morphism $h\colon Z\to X$ from a smooth projective variety $Z$ such that $f_*\mathcal{O}_X(D)$ is a direct summand of $(f\circ h)_*\mathcal{O}_Z(K_Z)$.
\end{thm}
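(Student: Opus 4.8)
The plan is to deduce Theorem~\ref{main2} from Theorem~\ref{main1} by a standard trick that realizes $f_*\mathcal{O}_X(D)$ as a direct summand of a pushforward of a canonical bundle from a smooth variety, using the global generation after isogeny to build the necessary ``saturating'' cover. First I would apply Theorem~\ref{main1} to obtain an isogeny $\varphi\colon A'\to A$ such that $\varphi^*f_*\mathcal{O}_X(lD)$ is globally generated for all $l\geq 1$; set up the fiber product $X'=X\times_A A'$ with its induced maps $f'\colon X'\to A'$ and $\varphi'\colon X'\to X$, as in the diagram of Theorem~\ref{main1}. Since $\varphi'$ is \'etale, $(X',\varphi'^*\Delta)$ is again klt, $\varphi'^*D\sim_{\Q} m(K_{X'}+\varphi'^*\Delta)$, and by flat base change $\varphi^*f_*\mathcal{O}_X(D)=f'_*\mathcal{O}_{X'}(\varphi'^*D)$. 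Because $\varphi$ is \'etale, it suffices to produce the direct summand statement upstairs on $A'$ and then push back down: if $f'_*\mathcal{O}_{X'}(\varphi'^*D)$ is a direct summand of $(f'\circ h')_*\mathcal{O}_{Z'}(K_{Z'})$ for some generically finite $h'\colon Z'\to X'$, then composing with $\varphi'$ and using that $\varphi'$ is \'etale (so $K_{Z'}$ is unchanged and pushforward along $\varphi'$ of a summand is a summand, after possibly taking a further resolution) recovers the statement on $X$. So I would replace everything by the primed objects and assume from the start that $f_*\mathcal{O}_X(lD)$ is globally generated for all $l\geq 1$.

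Next, the core construction: with $f_*\mathcal{O}_X(D)$ globally generated, I want to reinterpret $D$ itself as (a summand of) a canonical-type divisor on a cyclic cover. Choose $N$ so that $ND$ is Cartier and $ND\sim N m(K_X+\Delta)$ with $Nm$ a positive integer; actually the cleaner route is the classical one used in \cite{LPS20} and its predecessors: since $D\sim_{\Q} m(K_X+\Delta)$ with $m\geq 1$, write $m=1+(m-1)$ and absorb the ``extra'' positivity. Concretely, take a general member $B\in |pD|$ for $p$ divisible enough that $pD$ is Cartier and basepoint-free (global generation of $f_*\mathcal{O}_X(pD)$ together with a sufficiently positive line bundle pulled back from $A$, or directly very ampleness after twisting, gives a smooth $B$ by Bertini away from the non-klt locus—here one must be a bit careful and instead use global generation of $f_*\mathcal{O}_X(pD)$ as a sheaf on $A$ to get that $pD$ restricted to a general fiber, and globally after twist, is free). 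Form the degree-$p$ cyclic cover $\pi\colon Y\to X$ branched along $B$, take a log resolution $h\colon Z\to Y\to X$, and verify that $h_*\mathcal{O}_Z(K_Z)$ contains $\mathcal{O}_X(K_X + \text{(round-down terms)})$; combined with $D\sim_{\Q} m(K_X+\Delta)$ and the klt hypothesis (so that the discrepancy contributions from $\Delta$ and from the branch divisor assemble into an effective integral correction), the divisor $D$ appears as a direct summand of $h_*\omega_Z$ via the splitting coming from the cyclic group action. Pushing forward by $f$ and using that direct summands are preserved under $f_*$ yields that $f_*\mathcal{O}_X(D)$ is a direct summand of $(f\circ h)_*\mathcal{O}_Z(K_Z)$.

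The main obstacle I anticipate is the bookkeeping in the cyclic-cover/discrepancy computation showing that $\mathcal{O}_X(D)$ is exactly a summand of $h_*\omega_Z$—one needs $D - K_X$ to be $\Q$-linearly equivalent to an effective $\Q$-divisor whose ``fractional part'' is controlled by the klt condition on $(X,\Delta)$ and by the choice of branch divisor $B\in |pD|$, and getting the rounding to land on the nose (rather than on $D$ plus an extra effective piece) is where the hypotheses $m\geq 1$ and klt are really used. A secondary technical point is ensuring $B$ can be chosen general enough: global generation of the \emph{sheaf} $f_*\mathcal{O}_X(lD)$ on $A$ does not immediately give a basepoint-free linear system of $lD$ on $X$, so one twists by an ample line bundle from $A$ to make $lD + f^*(\text{ample})$ free and then checks the cyclic cover construction is insensitive to this twist (it contributes only to the map to $A$, not to the splitting of $f_*\mathcal{O}_X(D)$ as an $\mathcal{O}_A$-module). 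Once these two points are handled, the descent along the isogeny $\varphi$ in the first paragraph is routine, and the proof concludes by invoking Proposition~\ref{equi} only insofar as it is needed to know Theorem~\ref{main1} applies in the stated generality.
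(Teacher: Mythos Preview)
Your high-level strategy matches the paper's: Theorem~\ref{main2} is deduced from global generation after isogeny via the implication (ii)$\Rightarrow$(iii) of Proposition~\ref{equi}, and your isogeny/descent reduction (base change along $\varphi$, then use that $\mathcal{O}_X$ is a summand of $\varphi'_*\mathcal{O}_{X'}$) is exactly what the paper does there. The gap is in your ``core construction'' for $m>1$. A direct cyclic cover branched along $B\in|pD|$ does not produce $\mathcal{O}_X(D)$ as a summand of $h_*\omega_Z$; writing $D\sim_{\Q}K_X+\Delta+\frac{m-1}{m}D$ shows that what you actually need is to replace $\frac{m-1}{m}D$ by an effective $\Q$-divisor with klt coefficients, and that requires a free linear system on $X$. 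Global generation of $f_*\mathcal{O}_X(pD)$ on $A$ does not give this, and your proposed fix of twisting by $f^*(\text{ample})$ fails: if you branch along $B\in|pD+f^*H|$ the eigensheaf summands carry an unavoidable $f^*H$-twist, so after pushforward you obtain $f_*\mathcal{O}_X(D)\otimes\mathcal{O}_A(\text{multiple of }H)$ rather than $f_*\mathcal{O}_X(D)$. The construction is not ``insensitive to this twist.''

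The paper resolves this (in the proof of (ii)$\Rightarrow$(iii) in Proposition~\ref{equi}) by the technique of \cite[Theorem~1.7]{PS14}. One takes a log resolution $\mu\colon Y\to X'$ of the pair and of the \emph{relative} base ideal of $D'$, so that the image of $g^*g_*\mathcal{O}_Y(G)\to\mathcal{O}_Y(G)$ is a line bundle $\mathcal{O}_Y(G-G')$ (here $G=\mu^*D'+\lceil mE\rceil$). Since $g_*\mathcal{O}_Y(G)$ is globally generated, so is $\mathcal{O}_Y(G-G')$; choose a general $H\sim G-G'$ by Bertini and set $T=\big\lfloor\Delta_Y'+\frac{m-1}{m}G'\big\rfloor$. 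Then $G-T\sim_{\Q}K_Y+N$ with $(Y,N)$ klt, so the genuine $m=1$ cyclic cover of Lemma~\ref{m=1} applies to $G-T$. The key point you are missing is that $0\leq T\leq G'$ forces $g_*\mathcal{O}_Y(G-T)=g_*\mathcal{O}_Y(G)\cong f'_*\mathcal{O}_{X'}(D')$, because subtracting anything between $0$ and the relative fixed part leaves the pushforward unchanged. This is precisely how one circumvents the failure of $|pD|$ to be free on $X$ without introducing an extraneous twist.
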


Theorem \ref{main2} implies that all the properties of pushforwards of canonical bundles mentioned above, along with many other properties, carry over to pushforwards of klt pairs. See Section \ref{4} for details.

The third theorem regards the Chen-Jiang decomposition property for the sheaf $f_*\mathcal{O}_X(D)$. The case of pushforwards of canonical bundles and pluricanonical bundles is obtained in \cite{CJ18, PPS17, LPS20} in increasing generality.

\begin{thm}\label{main3}
Let $f$ be a morphism from a klt pair $(X, \Delta)$ to an abelian variety $A$, $m\geq1$ a rational number and $D$ a Cartier divisor on $X$ such that $D\sim_{\Q}m(K_X+\Delta)$. Then $f_*\mathcal{O}_X(D)$ admits a finite direct sum decomposition
$$f_*\mathcal{O}_X(D)\cong \bigoplus_{i\in I}(\alpha_i\otimes p_i^*\mathcal{F}_i),$$
where each $A_i$ is an abelian variety, each $p_i\colon A\to A_i$ is a fibration, each $\mathcal{F}_i$ is a nonzero M-regular coherent sheaf on $A_i$, and each $\alpha_i\in\Pic^0(A)$ is line bundle which becomes trivial when pulled back by the isogeny $\varphi$ in Theorem \ref{main1}.
\end{thm}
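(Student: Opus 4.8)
The plan is to reduce to the already-known case of pushforwards of canonical bundles via Theorem \ref{main2}, then to invoke a closure property of the Chen--Jiang decomposition under direct summands, all the while keeping track of the $\Pic^0(A)$-twists by means of Theorem \ref{main1}. By Proposition \ref{equi} this theorem is equivalent to Theorem \ref{main1}, so it also follows once the latter is established; the route sketched below makes that connection explicit.

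\emph{Step 1: producing a decomposition.} By Theorem \ref{main2} there is a generically finite surjective morphism $h\colon Z\to X$ from a smooth projective variety such that, writing $g=f\circ h\colon Z\to A$, the sheaf $f_*\mathcal{O}_X(D)$ is a direct summand of $g_*\mathcal{O}_Z(K_Z)$. The latter is a pushforward of a canonical bundle under a morphism to an abelian variety, so by \cite{CJ18,PPS17,LPS20} it admits a Chen--Jiang decomposition, and its $\Pic^0(A)$-twists are torsion. It remains to see that the Chen--Jiang decomposition property is inherited by direct summands. Since $f_*\mathcal{O}_X(D)\otimes\alpha$ is a direct summand of $g_*\mathcal{O}_Z(K_Z)\otimes\alpha$ for every $\alpha\in\Pic^0(A)$, the cohomological support loci of $f_*\mathcal{O}_X(D)$ are contained in those of $g_*\mathcal{O}_Z(K_Z)$ and hence are again finite unions of torsion translates of subtori. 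Starting from a Chen--Jiang decomposition $\bigoplus_i(\alpha_i\otimes p_i^*\mathcal{F}_i)$ of $g_*\mathcal{O}_Z(K_Z)$ with every $\mathcal{F}_i$ M-regular, I would regroup the summands according to the pair formed by the fibration $p_i$ together with the class of $\alpha_i$ modulo $p_i^*\Pic^0$, prove that $\operatorname{Hom}$ vanishes between groups attached to ``incompatible'' such pairs — this is forced by the shapes of their cohomological support loci — and then conclude, using the Krull--Schmidt property of coherent sheaves on the projective variety $A$, that the idempotent of $g_*\mathcal{O}_Z(K_Z)$ cutting out $f_*\mathcal{O}_X(D)$ is block-diagonal for a sufficiently fine such grouping. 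Its image is then again a finite direct sum of pieces of the form $\beta\otimes p^*(\text{M-regular})$, with every $\beta$ among the original $\alpha_i$; M-regularity survives because pullback along isogenies and along fibrations interacts predictably with cohomological support loci.

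\emph{Step 2: controlling the twists.} Let $f_*\mathcal{O}_X(D)\cong\bigoplus_i(\alpha_i\otimes p_i^*\mathcal{F}_i)$ be the decomposition from Step 1 and pull it back by the isogeny $\varphi$ of Theorem \ref{main1}. Then $\varphi^*f_*\mathcal{O}_X(D)$ is globally generated, hence so is each summand $\varphi^*\alpha_i\otimes(p_i\circ\varphi)^*\mathcal{F}_i$; factoring $p_i\circ\varphi=s_i\circ r_i$ with $r_i$ a fibration and $s_i$ an isogeny, each such summand has a nonzero global section, which forces $\varphi^*\alpha_i$ to be trivial along the fibers of $r_i$ — so $\varphi^*f_*\mathcal{O}_X(D)$ carries a Chen--Jiang decomposition with trivial twists. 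To upgrade this to triviality of each $\varphi^*\alpha_i$ on all of $A'$, I would compare this untwisted decomposition with the $\varphi$-pullback of the decomposition of $f_*\mathcal{O}_X(D)$ via the essential uniqueness established in Step 1, and feed in the global generation of $\varphi^*f_*\mathcal{O}_X(lD)$ for every $l\ge1$ together with the multiplication maps of the section algebra $\bigoplus_{l\ge1}f_*\mathcal{O}_X(lD)$, which link the twists of the different $f_*\mathcal{O}_X(lD)$.

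The main obstacle is the closure of the Chen--Jiang decomposition property under direct summands in Step 1 — making the essential uniqueness of the decomposition and the $\operatorname{Hom}$-vanishing between incompatible pieces precise enough to compel an arbitrary idempotent to respect it — together with the bookkeeping in Step 2 needed to tie the twists to the particular isogeny $\varphi$ of Theorem \ref{main1} rather than to some unspecified isogeny.
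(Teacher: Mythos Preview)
Your proposal is circular. You invoke Theorem \ref{main2} in Step 1 and Theorem \ref{main1} in Step 2, but in the paper both of these are \emph{deduced from} Theorem \ref{main3} via Proposition \ref{equi}; none of them has an independent proof you can lean on. So what you have written is not a proof of Theorem \ref{main3} but a restatement of the implication (iii)$\Rightarrow$(i) (and (ii)$\Rightarrow$(i)) of Proposition \ref{equi}, which the paper already records. Incidentally, the closure of the Chen--Jiang decomposition under direct summands that you try to rebuild with Krull--Schmidt and $\operatorname{Hom}$-vanishing is precisely Proposition \ref{sum} (quoted from \cite[Proposition 3.6]{LPS20}); no extra work is needed there.

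The actual content of the paper's proof is what you are missing: Theorem \ref{main3} is proved directly, by induction on $\dim A$. Assuming the theorem for abelian varieties of smaller dimension, Proposition \ref{SNC} shows that for a log smooth klt pair the sheaf $f_*\mathcal{O}_X(N(K_X+\Delta))$ has the Chen--Jiang decomposition for all sufficiently large and divisible $N$; this is the hard step, using the singular Hermitian metric input (Lemma \ref{split}), the structure of cohomological support loci, and a fiberwise argument over an auxiliary quotient $A\to B$. Proposition \ref{N} then lets you descend from $ND$ to $D$, and a log resolution reduces the general klt case to the log smooth one. Only after Theorem \ref{main3} is established this way do Theorems \ref{main1} and \ref{main2} follow. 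The claim about the $\alpha_i$ becoming trivial under the specific isogeny $\varphi$ is handled by pointing to the proof of \cite[Theorem C]{LPS20}, not by the multiplication-map bookkeeping you sketch in Step 2.
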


When attacking these theorems in Section \ref{3}, we in fact first prove Theorem \ref{main3}. This theorem gives a detailed description about the positivity of the sheaf $f_*\mathcal{O}_X(D)$. It essentially says that the sheaf $f_*\mathcal{O}_X(D)$ is not just semipositive, but semiample, since M-regular sheaves are ample by \cite[Corollary 3.2]{Deb06}.

We briefly explain how the main theorems are proved. When we consider pluricanonical bundles on smooth varieties, Viehweg's cyclic covering trick applies in order to perform some reduction steps. However, this trick does not apply in our case when $m>1$. Instead, in Section \ref{3} we need to do substantial work in order to overcome this, using a technique from \cite[Theorem 1.7]{PS14} and some other methods. These statements imply that it suffices to show that $f_*\mathcal{O}_X(ND)$ satisfies the conclusions of Theorems \ref{main1}, \ref{main2} and \ref{main3} for some positive integer $N$ in order to prove that $f_*\mathcal{O}_X(D)$ has the same properties, which gives us extra flexibility. When $N$ is sufficiently big and divisible, we follow the strategy in \cite[Sections 8 and 9]{LPS20}, with some modifications, to prove that $f_*\mathcal{O}_X(ND)$ has the required properties. See Proposition \ref{SNC} for details. 

We note that there is an alternative approach to proving the main theorems for $f_*\mathcal{O}_X(ND)$ when $N$ is sufficiently big and divisible, different from the one described above. We believe that this is worth mentioning since the techniques are rather different and interesting in their own right. It is based on the use of the minimal model program but it requires the assumption that the general fiber $(F, \Delta|_F)$ of $f$ has a good minimal model. This approach is not purely algebraic either. See Proposition \ref{MMP} and Remark \ref{not} for details.

With more work, in Section \ref{4} we prove that pushforwards of klt pairs under the Albanese morphism satisfy stronger positivity which is valid only for $m>1$. The crucial point is the next theorem which generalizes \cite[Proposition 2.12]{HP02}, \cite[Lemma 2.2]{Jia11} and \cite[Theorem 11.2]{HPS18} to klt pairs. We consider a smooth model of the Iitaka fibration associated to a Cartier divisor $D$ where $\kappa(X, D)\geq0$. 

\begin{thm}\label{main6}
Let $f\colon X\to Y$ be a smooth model of the Iitaka fibration associated to a Cartier divisor $D$ on $X$ where $D\sim_{\Q}m(K_X+\Delta)$, $m>1$ is a rational number, $Y$ is smooth and $(X, \Delta)$ is a klt pair. Let $a_Y\colon Y\to \Alb(Y)$ be the Albanese morphism of $Y$ and $g$ the morphism $a_Y\circ f$. Then:
\begin{enumerate}
	\item[$\mathrm{(i)}$] For every torsion point $\alpha\in\Pic^0(X)$, every $\beta\in\Pic^0(Y)$ and every nef divisor $L$ on $\Alb(Y)$, we have
$$\quad h^0(X, \mathcal{O}_X(D+g^*L)\otimes\alpha)=h^0(X, \mathcal{O}_X(D+g^*L)\otimes\alpha\otimes f^*\beta).$$

	\item[$\mathrm{(ii)}$] There exist finitely many torsion points $\alpha_i\in\Pic^0(X)$ such that
$$V^0(X, \mathcal{O}_X(D))=\bigcup_{i\in I}(\alpha_i\otimes f^*\Pic^0(Y)).$$
\end{enumerate}
\end{thm}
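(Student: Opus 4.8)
The plan is to transport both statements to the abelian variety $\Alb(Y)$ and combine the Chen-Jiang decomposition of the relevant pushforwards (Theorem \ref{main3}) with the extra positivity that $f$ is an Iitaka fibration and $m>1$. Throughout I use that $a_Y$ induces an isomorphism on $\Pic^0$, so that $f^*\Pic^0(Y)=g^*\Pic^0(\Alb(Y))$ and $g^*L=f^*a_Y^*L$.

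\emph{Reduction for (i).} Since $\alpha$ is torsion we have $\alpha\sim_\Q 0$, hence $\mathcal{O}_X(D)\otimes\alpha\cong\mathcal{O}_X(D')$ for some Cartier divisor $D'\sim_\Q m(K_X+\Delta)$. Set $\mathcal{G}:=g_*(\mathcal{O}_X(D)\otimes\alpha)=g_*\mathcal{O}_X(D')$. Writing $\beta=a_Y^*\beta'$ with $\beta'\in\Pic^0(\Alb(Y))$, the projection formula and $H^0(X,-)=H^0(\Alb(Y),g_*-)$ give
\[ h^0\bigl(X,\mathcal{O}_X(D+g^*L)\otimes\alpha\otimes f^*\beta\bigr)=h^0\bigl(\Alb(Y),\mathcal{G}\otimes\mathcal{O}_{\Alb(Y)}(L)\otimes\beta'\bigr), \]
so (i) is equivalent to the $\beta'$-independence of the right-hand side, and by Theorem \ref{main3} (applied to $g\colon X\to\Alb(Y)$) the sheaf $\mathcal{G}$ has a Chen-Jiang decomposition.

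\emph{Heart of (i).} I would prove that $\mathcal{G}\otimes\mathcal{O}_{\Alb(Y)}(L)$ is either zero or M-regular on $\Alb(Y)$; for such a sheaf $h^0(-\otimes\beta')=\chi(-)$ is independent of $\beta'$. Concretely, in the Chen-Jiang decomposition of $\mathcal{G}\otimes\mathcal{O}(L)$ one must rule out any summand of the form $\gamma\otimes q^*\mathcal{H}$ with $q$ a nontrivial fibration, since exactly such summands produce jumps of $h^0$ along the cosets coming from $\ker q$. This is where $m>1$ enters: from $D\sim_\Q m(K_X+\Delta)$ one has $(m-1)(K_X+\Delta)\sim_\Q\frac{m-1}{m}D$, and since $\kappa(X,D)=\dim Y$ and $f$ is a smooth model of the Iitaka fibration of $D$, the usual structure of Iitaka fibrations yields $D\sim_\Q K_X+\Delta'+f^*B$ with $(X,\Delta')$ klt and $B$ big on $Y$ (after the standard perturbation of the effective part into $\Delta'$). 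Then $D+g^*L\sim_\Q K_X+\Delta'+f^*(B+a_Y^*L)$ with $B+a_Y^*L$ still big on $Y$, so Nadel-type vanishing for $(X,\Delta')$ over $Y$ forces $f_*\mathcal{O}_X(D+g^*L)$ to be a big sheaf on $Y$; combined with the Chen-Jiang structure coming from Theorem \ref{main3}, this should force all the fibration factors $q$ in the decomposition of $g_*\mathcal{O}_X(D+g^*L)=a_{Y*}f_*\mathcal{O}_X(D+g^*L)$ to be isomorphisms, i.e. the sheaf to be M-regular on $\Alb(Y)$ itself. I expect this propagation of bigness through $a_Y$ — turning bigness on $Y$ into M-regularity on $\Alb(Y)$ — to be the main obstacle, since a big line bundle on $Y$ need not be bounded below by a pullback from $\Alb(Y)$; the argument of \cite[Sections 8 and 9]{LPS20} adapted via Theorem \ref{main3} is the model I would follow here.

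\emph{Proof of (ii).} By Theorem \ref{main2} (or Theorem \ref{main3}) applied to the Albanese morphism $a_X\colon X\to\Alb(X)$, the sheaf $a_{X*}\mathcal{O}_X(D)$ has the Chen-Jiang decomposition; in particular $V^0(X,\mathcal{O}_X(D))=V^0(\Alb(X),a_{X*}\mathcal{O}_X(D))$ is a finite union of torsion translates of subtori of $\Pic^0(X)$. By part (i) with $L=0$ this set is stable under translation by $f^*\Pic^0(Y)$, so after replacing each occurring subtorus by its sum with $f^*\Pic^0(Y)$ we may write $V^0(X,\mathcal{O}_X(D))=\bigcup_{i\in I}(\alpha_i\otimes T_i)$ with $\alpha_i$ torsion and $T_i\supseteq f^*\Pic^0(Y)$ subtori. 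It remains to exclude $T_i\supsetneq f^*\Pic^0(Y)$. If this held, restrict to a general fiber $F$ of $f$: then $(F,\Delta|_F)$ is klt, $D|_F$ is Cartier with $D|_F\sim_\Q m(K_F+\Delta|_F)$ and $\kappa(F,D|_F)=0$ (as $Y$ is the Iitaka base). Since $f^*\Pic^0(Y)$ restricts trivially to $F$, the image of $\alpha_i\otimes T_i$ under $\Pic^0(X)\to\Pic^0(F)$ would be a positive-dimensional torsion translate of a subtorus contained in $V^0(F,\mathcal{O}_F(D|_F))$; choosing two distinct torsion points $\eta_1\neq\eta_2$ in it gives effective divisors $D_1\neq D_2$ with $\mathcal{O}_F(D_j)\cong\mathcal{O}_F(D|_F)\otimes\eta_j$, and for $e$ a common multiple of the orders of $\eta_1,\eta_2$ one gets distinct members $eD_1\neq eD_2$ of $|eD|_F|$, so $h^0(F,\mathcal{O}_F(eD|_F))\geq 2$ and $\kappa(F,D|_F)\geq 1$, a contradiction. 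Hence $T_i=f^*\Pic^0(Y)$ for every $i$, which is (ii).
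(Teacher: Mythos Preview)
Your argument for (ii) is close to the paper's and essentially works, modulo one unjustified step: from $T_i\supsetneq f^*\Pic^0(Y)$ you infer that the image of $T_i$ in $\Pic^0(F)$ is positive-dimensional, which presupposes that the identity component of $\ker\bigl(\Pic^0(X)\to\Pic^0(F)\bigr)$ is exactly $f^*\Pic^0(Y)$. That is not automatic. The paper handles this point via \cite[Lemma~2.6]{CH04} (if $\alpha|_F$ is torsion for general $F$ then a multiple of $\alpha$ lies in $f^*\Pic^0(Y)$); once that is in place, your $\kappa(F,D|_F)=0$ contradiction is the same as the paper's.

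Your approach to (i), however, has a genuine gap. First a minor point: M-regularity does \emph{not} give $h^0(-\otimes\beta')=\chi$ for all $\beta'$, only for general ones; you would need $\IT$. More importantly, the obstacle you yourself flag --- promoting bigness of $B$ on $Y$ to M-regularity (or $\IT$) of $g_*\mathcal{O}_X(D+g^*L)$ on $\Alb(Y)$ --- is precisely what has to be proved, and your sketch does not close it; in fact the conclusion you are aiming for is essentially Theorem~\ref{main4}, whose proof in the paper \emph{uses} Theorem~\ref{main6}(i), so this route is circular. The paper proceeds quite differently and does not go through the Chen--Jiang decomposition at all. The key input is that, because $g=a_Y\circ f$ factors through the Iitaka fibration of $D$, for $c\gg0$ one has $cD\sim g^*H'+B'$ with $H'$ very ample \emph{on $\Alb(Y)$ itself} (not merely big on $Y$) and $B'$ effective on $X$. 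After a log resolution and subtracting a carefully chosen effective divisor $T$ (built using $m>1$ exactly as in the proof of Proposition~\ref{equi}), one arranges
\[
\mu^*(D+g^*L+\alpha')-T\ \sim_\Q\ K_W+\Delta'_W+\mu^*g^*(\text{ample on }\Alb(Y))
\]
with $(W,\Delta'_W)$ klt. Koll\'ar's vanishing \cite[Corollary~10.15]{Kol95} then kills all higher cohomology of the pushforward after any twist by $\beta\in\Pic^0(\Alb(Y))$, so $h^0=\chi$ is constant; finally a symmetry trick (replace $L$ by $L-\beta'$) upgrades the resulting inequality to the desired equality.
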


To prove Theorem \ref{main6}, we follow the strategy used for example in \cite[Lemma 2.2]{Jia11} and \cite[Theorem 11.2]{HPS18} for the case of smooth varieties. However, the techniques involved are somewhat different in our case. Once we have Theorem \ref{main6}, the next theorem below follows from Theorem \ref{main3} and standard arguments. The case when $D=mK_X$ and $m\geq2$ is an integer is obtained in \cite[Theorem D]{LPS20}.

\begin{thm}\label{main4}
Let $f\colon X\to Y$ be a smooth model of the Iitaka fibration associated to a Cartier divisor $D$ on $X$ where $D\sim_{\Q}m(K_X+\Delta)$, $m>1$ is a rational number, $Y$ is smooth and $(X, \Delta)$ is a klt pair. Let $a_X\colon X\to \Alb(X)$ be the Albanese morphism of $X$ and $a_f\colon \Alb(X)\to\Alb(Y)$ the induced morphism between Albanese varieties. Then $(a_X)_*\mathcal{O}_X(lD)$ admits, for every positive integer $l$, a finite direct sum decomposition 
$$(a_X)_*\mathcal{O}_X(lD)\cong \bigoplus_{i\in I}(\alpha_i\otimes a_f^*\mathcal{F}_i),$$
where each $\mathcal{F}_i$ is a coherent sheaf on $\Alb(Y)$ satisfying $\IT$ and each $\alpha_i\in\Pic^0(X)$ is a torsion line bundle whose order can be bounded independently of $l$.
\end{thm}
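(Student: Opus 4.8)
The plan is to deduce Theorem \ref{main4} from Theorem \ref{main3} applied to the morphism $g = a_Y \circ f \colon X \to \Alb(Y)$, combined with the structural information about $V^0(X, \mathcal{O}_X(D))$ coming from Theorem \ref{main6}(ii). First I would apply Theorem \ref{main3} to $g$ and the divisor $lD$ (noting $lD \sim_{\Q} lm(K_X + \Delta)$ with $lm \geq 1$), obtaining a decomposition $g_*\mathcal{O}_X(lD) \cong \bigoplus_{i}(\alpha_i \otimes p_i^*\mathcal{F}_i)$ with $\mathcal{F}_i$ M-regular on abelian quotients $A_i$ of $\Alb(Y)$ and $\alpha_i \in \Pic^0(\Alb(Y))$ torsion (they become trivial on the isogeny $\varphi$ from Theorem \ref{main1}, hence are torsion, and the order of $\varphi$ can be controlled). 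Since $a_X \colon X \to \Alb(X)$ and $g$ factor through each other appropriately — precisely, $g = a_f \circ a_X$ up to translation by the universal property of the Albanese — we have $g_* \mathcal{O}_X(lD) = (a_f)_* (a_X)_*\mathcal{O}_X(lD)$, so I need to promote the decomposition of the pushforward to $\Alb(Y)$ to a decomposition of $(a_X)_*\mathcal{O}_X(lD)$ itself on $\Alb(X)$.

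The key mechanism for this promotion is that $f$ is (a smooth model of) the Iitaka fibration of $D$, so the sheaf $(a_X)_*\mathcal{O}_X(lD)$ should be supported on (a translate of) the sub-abelian-variety $\ker(a_f)^\circ$-fibration structure; more concretely, the cohomological support loci $V^0(\Alb(X), (a_X)_*\mathcal{O}_X(lD)) = V^0(X, \mathcal{O}_X(lD))$ are, by Theorem \ref{main6}(ii) applied with $D$ replaced by a suitable multiple (here one uses that $\kappa(X, lD) = \kappa(X, D) = \dim Y$ and that the Iitaka fibration is unchanged), unions of torsion translates of $f^*\Pic^0(Y) = a_X^* a_f^* \Pic^0(\Alb(Y))$. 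This forces the M-regular pieces $\mathcal{F}_i$ to pull back from $\Alb(Y)$ rather than from proper quotients: an M-regular sheaf on a proper quotient $A_i$ of $\Alb(Y)$ would contribute support loci that are not contained in the pullback of $\Pic^0(\Alb(Y))$, contradicting Theorem \ref{main6}(ii). Hence each $p_i$ is forced to be (up to isogeny/the identity) the map to $\Alb(Y)$, i.e. each $\mathcal{F}_i$ is an M-regular sheaf on $\Alb(Y)$, and an M-regular sheaf on an abelian variety satisfies $\IT$ — this is a standard fact (M-regularity plus the fact that on an abelian variety a sheaf is M-regular iff it is a GV-sheaf with the minimal-codimension condition; in the relevant cases M-regular sheaves twisted generically by $\Pic^0$ satisfy IT$_0$, or one invokes that M-regular $\Rightarrow$ continuously globally generated $\Rightarrow$ a suitable twist is IT$_0$). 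Combining, $(a_X)_*\mathcal{O}_X(lD) = \bigoplus_i (\alpha_i \otimes a_f^*\mathcal{F}_i)$ with $\mathcal{F}_i$ satisfying $\IT$ on $\Alb(Y)$.

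For the uniform bound on the torsion order of the $\alpha_i$ independent of $l$, I would argue as follows: by Theorem \ref{main1} there is a \emph{single} isogeny $\varphi \colon A' \to A = \Alb(Y)$ (depending only on $f$, $\Delta$, $m$, not on $l$) such that $\varphi^* g_*\mathcal{O}_X(lD)$ is globally generated for all $l \geq 1$; the line bundles $\alpha_i$ appearing in the Chen-Jiang decomposition of $g_*\mathcal{O}_X(lD)$ are killed by this fixed $\varphi$ (as asserted in Theorem \ref{main3}), hence their orders divide the degree of $\varphi$, which is independent of $l$. Pulling back to $\Alb(X)$ via $a_f$ only replaces $\alpha_i$ by $a_X^* a_f^* \alpha_i$... wait, more carefully: the $\alpha_i$ in the $(a_X)_*$ decomposition are in $\Pic^0(X)$ and restrict/relate to the ones on $\Alb(Y)$ pulled back by $a_f$, so the same degree of $\varphi$ bounds them.

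The step I expect to be the main obstacle is the "rigidity" argument showing the M-regular pieces must descend all the way down from proper quotients of $\Alb(Y)$ — i.e. reconciling the general Chen-Jiang decomposition on $\Alb(Y)$ (whose pieces a priori live on arbitrary quotient abelian varieties) with the constraint imposed by $f$ being the Iitaka fibration so that everything is "concentrated" over $\Alb(Y)$ with no room for further fibration. This requires carefully matching the support loci predicted by Theorem \ref{main6}(ii) with those read off from the decomposition, and handling the translation ambiguities in the Albanese universal property; the twisting line bundles $\alpha_i$ and keeping their orders uniform is a bookkeeping nuisance but not a conceptual obstacle. I would also need to be slightly careful that Theorem \ref{main6} is stated for $D$ while I want it for $lD$ — but since $lD \sim_\Q lm(K_X+\Delta)$ with $lm > 1$ still and $f$ remains a smooth model of the Iitaka fibration of $lD$, Theorem \ref{main6} applies verbatim to $lD$.
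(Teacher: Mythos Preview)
There are two genuine gaps in your plan.

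\textbf{Wrong target for Theorem \ref{main3}.} You apply Theorem \ref{main3} to $g=a_Y\circ f\colon X\to\Alb(Y)$ and then hope to ``promote'' the resulting decomposition of $g_*\mathcal{O}_X(lD)=(a_f)_*(a_X)_*\mathcal{O}_X(lD)$ to a decomposition of $(a_X)_*\mathcal{O}_X(lD)$ on $\Alb(X)$. This goes in the wrong direction: pushing forward by $a_f$ loses information (for instance, a summand of the form $\gamma$ with $\gamma\in\Pic^0(\Alb(X))\setminus a_f^*\Pic^0(\Alb(Y))$ would be killed by $(a_f)_*$), so a decomposition downstairs does not determine one upstairs. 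The paper instead applies Theorem \ref{main3} directly to $a_X\colon X\to\Alb(X)$, obtaining $(a_X)_*\mathcal{O}_X(lD)\cong\bigoplus_i(\alpha_i\otimes p_i^*\mathcal{G}_i)$ with $p_i\colon\Alb(X)\to A_i$, and then uses \cite[Lemma 3.3]{LPS20} together with Theorem \ref{main6}(ii) to conclude that each $p_i^*\Pic^0(A_i)\subseteq a_f^*\Pic^0(\Alb(Y))$, hence each $p_i$ \emph{factors through} $a_f$ as $p_i=q_i\circ a_f$. Setting $\mathcal{F}_i=q_i^*\mathcal{G}_i$ on $\Alb(Y)$ then gives the desired shape of the decomposition. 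No promotion is needed; the factorization is the content.

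\textbf{M-regular does not imply $\IT$.} Your claim that ``an M-regular sheaf on an abelian variety satisfies $\IT$'' is false: $\IT$ is strictly stronger than M-regularity (M-regular only asks $\codim\Supp\mathbf{R}^i\hat{\mathcal{S}}\mathcal{F}>i$, not $\mathbf{R}^i\hat{\mathcal{S}}\mathcal{F}=0$). The parenthetical justifications you offer all concern generic twists or continuous global generation, neither of which yields $\IT$ for $\mathcal{F}_i$ itself. In the paper, each $\mathcal{F}_i=q_i^*\mathcal{G}_i$ is the pullback of an M-regular sheaf by a fibration $q_i$, and such pullbacks are typically not even M-regular. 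The paper proves $\IT$ for $\mathcal{F}_i$ by a separate argument that you have not touched: Theorem \ref{main6}(i) (with $L=0$ and $\alpha=\alpha_i^{-1}$) shows $h^0(\Alb(Y),\mathcal{F}_i\otimes\beta)$ is constant in $\beta\in\Pic^0(Y)$; an isogeny base change combined with Corollary \ref{coro1} shows $\mathcal{F}_i$ is a $\GV$-sheaf; then Proposition \ref{IT} converts constant $h^0$ plus $\GV$ into $\IT$. Your plan invokes Theorem \ref{main6}(ii) but never part (i), which is precisely the ingredient that upgrades the decomposition to one with $\IT$ pieces.

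Your handling of the uniform torsion bound via Theorem \ref{main1} is fine and matches the paper.
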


\begin{rem}
After finishing this manuscript, I was informed by Zhi Jiang that he has also proved a result in \cite{Jia20} which is essentially the same as Theorem \ref{main4}, by a different but related method. He uses it to obtain some very interesting geometric applications. I would like to thank him for sharing his draft.
\end{rem}

We apply Theorem \ref{main4} to studying effective freeness and very ampleness for klt pairs on irregular varieties in Section \ref{4}. The following statement is a special case of Theorem \ref{main5}. It deals with the case when $(X, \Delta)$ is of log general type and $X$ is of maximal Albanese dimension, and generalizes some effective results about varieties with at worst canonical singularities from \cite{PP03, PP11b, LPS20} to klt pairs. See \cite[Corollary E]{LPS20} for instance.

\begin{coro}\label{coro5}
Let $(X, \Delta)$ be a klt pair of log general type, $D$ a Cartier divisor on $X$ such that $D\sim_{\Q}m(K_X+\Delta)$, $m>1$ a rational number and $a_X\colon X\to \Alb(X)$ the Albanese morphism of $X$. Assume that $a_X$ is generically finite onto its image and denote the union of the positive dimensional fibers of $a_X$ by $\Exc(a_X)$. Then for every $\alpha\in\Pic^0(X)$:
\begin{enumerate}
	\item[$\mathrm{(i)}$] $\mathcal{O}_{X}(2D)\otimes\alpha$ is globally generated away from $\Exc(a_X)$.

	\item[$\mathrm{(ii)}$] $\mathcal{O}_{X}(3D)\otimes\alpha$ is very ample away from $\Exc(a_X)$.
\end{enumerate}
\end{coro}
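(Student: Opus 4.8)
The plan is to deduce both statements from the stronger positivity of $(a_X)_*\mathcal{O}_X(lD)$ established in Theorem \ref{main4}, combined with a standard argument relating global generation and separation of points of a pushforward to the corresponding properties of the original sheaf away from the exceptional locus. First I would record the setup: since $(X,\Delta)$ is of log general type and $D\sim_{\Q}m(K_X+\Delta)$ with $m>1$, the divisor $D$ is big, so $a_X\colon X\to\Alb(X)$ is (up to birational modification, and after passing to a smooth model of the Iitaka fibration) in the situation of Theorem \ref{main4}; in fact since $D$ is big the Iitaka fibration is birational and $Y$ can be taken birational to $X$, so $a_f$ is (generically) an isogeny onto a subtorus and $\mathrm{Alb}(Y)$ may be identified with the image of $a_X$. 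The key input is then that for every $l\geq1$,
$$
(a_X)_*\mathcal{O}_X(lD)\cong\bigoplus_{i}(\alpha_i\otimes a_f^*\mathcal{F}_i)
$$
with each $\mathcal{F}_i$ an $\IT$ sheaf and each $\alpha_i$ torsion of bounded order. Pushing forward further to $\mathrm{Alb}(X)$ and using that $\IT$ sheaves remain $\IT$ (hence continuously globally generated, and their tensor products by two such are globally generated, by Pareschi--Popa's theory of $M$-regularity and the work recalled in Section \ref{4}), one gets that $(a_X)_*\mathcal{O}_X(lD)\otimes\alpha$ is globally generated for $l\geq 2$ and $\beta$-separating, i.e. $3$-jet type statements, for $l\geq 3$, for every $\alpha\in\Pic^0(X)$; here the point of $l=2,3$ rather than larger is exactly the numerology of "$M$-regular $\otimes$ $M$-regular is globally generated" and "product of three gives very ampleness", applied after absorbing the torsion twist $\alpha_i$ into $\alpha$.

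Next I would transfer these statements on $\mathrm{Alb}(X)$ back to $X$. Since $a_X$ is generically finite onto its image, it is an isomorphism over the complement of $\Exc(a_X)$ (more precisely, over the locus where the fibers are finite, and after discarding the finitely many points with positive-dimensional fiber one may further assume $a_X$ is injective there by a standard argument, or simply argue with $0$-dimensional fibers and the trace map). Over that open set $U=\mathrm{Alb}(X)\setminus a_X(\Exc(a_X))$ we have $(a_X)_*\mathcal{O}_X(lD)|_U\cong$ the direct image under a finite map, and global generation of the pushforward at a point $a_X(x)\in U$ implies global generation of $\mathcal{O}_X(lD)\otimes\alpha$ at $x$; likewise separation of points/tangent vectors of the pushforward over $U$ gives very ampleness of $\mathcal{O}_X(3D)\otimes\alpha$ on $a_X^{-1}(U)=X\setminus\Exc(a_X)$. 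For part (i) this yields global generation of $\mathcal{O}_X(2D)\otimes\alpha$ away from $\Exc(a_X)$; for part (ii), very ampleness of $\mathcal{O}_X(3D)\otimes\alpha$ away from $\Exc(a_X)$, which is the assertion.

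The main obstacle I expect is not the abelian-variety positivity input — that is essentially formal once Theorem \ref{main4} is in hand, via Pareschi--Popa — but rather the careful bookkeeping of the torsion twists $\alpha_i$ and the reduction to the Iitaka-fibration model: one must check that twisting by an arbitrary $\alpha\in\Pic^0(X)$ together with the finitely many fixed torsion $\alpha_i$ still lands one in the regime where the product theorems apply (this uses that $\Pic^0(X)=a_X^*\Pic^0(\mathrm{Alb}(X))$ and that $\IT$ is preserved under twisting by $\Pic^0$), and that the birational modification needed to reach the smooth Iitaka model does not affect the statement away from $\Exc(a_X)$ (because $D$ is big, the modification is an isomorphism over a big open set, and $\Exc(a_X)$ only grows in a controlled way — one absorbs the extra exceptional locus into $\Exc(a_X)$, or argues directly on a common resolution). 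Handling these points cleanly, and citing the precise form of the Pareschi--Popa generation statements needed for $l=2$ and $l=3$, is where the real care lies; the geometric heart is entirely contained in Theorem \ref{main4}.
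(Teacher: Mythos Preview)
Your approach is essentially correct and follows the same strategy as the paper: use the $\IT$ positivity of $(a_X)_*\mathcal{O}_X(lD)$ coming from Theorem \ref{main4}, feed it into the Pareschi--Popa continuous global generation machinery, and transfer back to $X$ using that $a_X$ is finite away from $\Exc(a_X)$.

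However, you are overcomplicating the argument at exactly the point you flag as the ``main obstacle.'' Because $(X,\Delta)$ is of log general type, the Iitaka fibration of $D$ is birational, so $Y$ is birational to $X$ and hence $q(X)=q(Y)$; thus $a_f\colon\Alb(X)\to\Alb(Y)$ is an \emph{isomorphism}, not merely an isogeny onto a subtorus. In this situation the decomposition in Theorem \ref{main4} collapses: Corollary \ref{ITL} says directly that the entire sheaf $(a_X)_*\mathcal{O}_X(D)$ satisfies $\IT$ (once it is nonzero). There is therefore no bookkeeping of torsion twists $\alpha_i$ to do at all, and twisting by an arbitrary $\alpha\in\Pic^0(X)$ causes no difficulty. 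The paper packages the remaining Pareschi--Popa argument as Theorem \ref{main5}, and then Corollary \ref{coro5} is a one-line application: $q(X)=q(Y)$ holds by log general type, and the fiberwise hypotheses of Theorem \ref{main5} (global generation of $\mathcal{O}_{X_a}(D)$ and surjectivity of base change, and likewise for $\mathcal{O}_{X_a}(2D)\otimes\mathcal{I}_x|_{X_a}$ in part (ii)) are trivially satisfied because the fibers $X_a$ over $W=a_X(X)\setminus a_X(\Exc(a_X))$ are zero-dimensional.

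Your sketch of part (ii) via ``separation of points/tangent vectors of the pushforward'' is also looser than what is actually needed; the clean route, as in the proof of Theorem \ref{main5}, is to show that $(a_X)_*(\mathcal{O}_X(2D)\otimes\mathcal{I}_x)$ satisfies $\IT$ for every $x\in a_X^{-1}(W)$ and then run the same continuous global generation argument once more. This is a standard step rather than a gap, but it is worth stating precisely.
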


Finally, it is natural to ask what happens if we consider log canonical pairs instead of klt pairs. Although some of the ideas we use here still work, at this point we do not know how to fully generalize our theorems to this case even when $m=1$. Moreover, we do not have the generalization of Proposition \ref{SNC} to log canonical pairs, because the results from the singular Hermitian metrics are not yet known in this setting.

\begin{ac}
{I would like to express my sincere gratitude to my advisor Mihnea Popa for proposing this problem and for helpful discussions and generous support. I would also like to thank Bingyi Chen for helpful discussions, and Zhi Jiang for sharing \cite{Jia20}.}
\end{ac}

\section{Preliminaries}\label{2}
We work over $\C$ and all varieties are projective throughout the paper. A \emph{fibration} is a projective surjective morphism with connected fibers. A \emph{birational contraction} is a birational map whose inverse does not contract any divisor. For the definitions and basic results on the singularities of pairs and the minimal model program (MMP) we refer to \cite{KM98}. We always ask the boundary $\Delta$ in a pair $(X,\Delta)$ to be effective.

First, we give the definition for the general fiber of a morphism which is not necessarily surjective.

\begin{defi}\label{general fiber}
Let $f\colon X \to Y$ be a morphism between two normal projective varieties. The Stein factorization of $f$ gives a decomposition of $f$ as $g\circ h$ where $h$ is a fibration and $g$ is a finite morphism. The \emph{general fiber} of $f$ is defined as the general fiber of $h$. The definition is similar when $X$ is replaced with a log canonical pair $(X, \Delta)$.
\end{defi}	

We include the definition of good minimal models.

\begin{defi} 
Let $(X, \Delta)$ be a log canonical pair over a normal projective variety $Z$. A birational contraction $\xi \colon (X, \Delta) \dashrightarrow (Y, \Delta_Y)$ over $Z$ to a $\Q$-factorial log canonical pair $ (Y,\Delta_Y) $ is a \emph{good minimal model} over $Z$ of the pair $(X,\Delta)$ if $ \Delta_Y =\xi_*\Delta$, if $K_Y+\Delta_Y$ is semiample over $Z$ and if
$$a(F, X, \Delta) < a(F, Y, \Delta_Y)$$
for any prime divisor $F$ on $X$ which is contracted by $\xi$.
\end{defi}

Next we recall the definition for the irregularity of a projective variety $X$.

\begin{defi}
Let $X$ be a smooth projective variety. The \emph{irregularity} $q(X)$ is defined as $h^1(X,\mathcal{O}_X)$. If $X$ is a projective variety, the \emph{irregularity} $q(X)$ is defined as the irregularity of any resolution of $X$.
\end{defi}

If $X$ is smooth, the irregularity $q(X)$ is equal to the dimension of its Albanese variety $\Alb(X)$.

Let $A$ be an abelian variety and $\hat{A}\cong \Pic^0(A)$ its dual abelian variety. We denote by
$$\bold{R}\hat{\mathcal{S}}\colon \bold{D}(A)\to \bold{D}(\hat{A}),\quad \bold{R}\hat{\mathcal{S}}\mathcal{F}:=\bold{R}p_{2*}(p_1^*\mathcal{F}\otimes P)$$
the Fourier-Mukai functor induced by a normalized Poincar\'e bundle $P$ on $A\times\hat{A}$ where $p_1$ and $p_2$ are projections onto $A$ and $\hat{A}$ respectively.

We recall several definitions. 

\begin{defi}
A coherent sheaf $\mathcal{F}$ on an abelian variety $A$
\begin{enumerate}
	\item[$\mathrm{(i)}$] is a GV-\emph{sheaf} if $\codim$ $\Supp$ $\bold{R}^i\hat{\mathcal{S}}\mathcal{F}\geq i$ for every $i>0$.
	\item[$\mathrm{(ii)}$] is \emph{M-regular} if $\codim$ $\Supp$ $\bold{R}^i\hat{\mathcal{S}}\mathcal{F}> i$ for every $i>0$.
	\item[$\mathrm{(iii)}$] \emph{satisfies} $\IT$ if $\bold{R}^i\hat{\mathcal{S}}\mathcal{F}=0$ for every $i>0.$
\end{enumerate}
\end{defi}

We include a proposition about determining when a GV-sheaf satisfies $\IT$ which is \cite[Proposition 2.3]{LPS20}.

\begin{prop}\label{IT}
Let $\mathcal{F}$ be a $\GV$-sheaf on an abelian variety $A$. If $h^0(A, \mathcal{F}\otimes\alpha)$ is independent of $\alpha\in\Pic^0(A)$, then $\mathcal{F}$ satisfies $\IT$ and $\bold{R}\hat{\mathcal{S}}\mathcal{F}$ is locally free.
\end{prop}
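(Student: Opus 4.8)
The plan is to deduce $\IT$ from the local freeness of the ``dual Fourier--Mukai transform'' of $\mathcal{F}$, and then to obtain the local freeness of $\bold{R}\hat{\mathcal{S}}\mathcal{F}$ as a by-product.

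First I would invoke the homological characterization of the generic vanishing property due to Hacon and Pareschi--Popa. Writing $\bold{R}\Delta_A:=\bold{R}\mathcal{H}om(-,\mathcal{O}_A)$ for the derived dual and $g:=\dim A$, the sheaf $\mathcal{F}$ is a $\GV$-sheaf if and only if $\bold{R}^i\hat{\mathcal{S}}(\bold{R}\Delta_A\mathcal{F})=0$ for every $i\neq g$. Setting $\widehat{\mathcal{F}}:=\bold{R}^g\hat{\mathcal{S}}(\bold{R}\Delta_A\mathcal{F})$, which is therefore a genuine coherent sheaf on $\hat{A}$, Mukai's formula exchanging the Fourier--Mukai functor with duality then yields an isomorphism $\bold{R}\hat{\mathcal{S}}\mathcal{F}\cong\bold{R}\Delta_{\hat{A}}\bigl((-1_{\hat{A}})^*\widehat{\mathcal{F}}\bigr)$ in $\bold{D}(\hat{A})$.

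Next I would combine derived base change along the proper projection $p_2$ with Grothendieck--Serre duality on $A$ (where $\omega_A\cong\mathcal{O}_A$). Restricting $\bold{R}\hat{\mathcal{S}}(\bold{R}\Delta_A\mathcal{F})=\widehat{\mathcal{F}}[-g]$ to a point $\alpha\in\hat{A}\cong\Pic^0(A)$ and dualizing gives natural isomorphisms
$$\mathrm{Tor}_i^{\mathcal{O}_{\hat{A}}}\!\bigl(\widehat{\mathcal{F}},k(\alpha)\bigr)\;\cong\;H^i(A,\mathcal{F}\otimes\alpha)^\vee\qquad(i\geq0,\ \alpha\in\Pic^0(A)),$$
for a suitable sign normalization of the Poincar\'e bundle. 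For $i=0$ this reads $\dim_{\C}\bigl(\widehat{\mathcal{F}}\otimes k(\alpha)\bigr)=h^0(A,\mathcal{F}\otimes\alpha)$, which by hypothesis is independent of $\alpha$; since $\hat{A}$ is a smooth connected variety, a coherent sheaf whose fibre dimension is constant is locally free, and hence $\widehat{\mathcal{F}}$ is locally free. It then follows that $\mathrm{Tor}_i(\widehat{\mathcal{F}},k(\alpha))=0$ for all $i>0$ and all $\alpha$, so $H^i(A,\mathcal{F}\otimes\alpha)=0$ for all $i>0$ and all $\alpha\in\Pic^0(A)$; that is, $\mathcal{F}$ satisfies $\IT$. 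Finally, since $\widehat{\mathcal{F}}$ is locally free, $\bold{R}\hat{\mathcal{S}}\mathcal{F}\cong\bold{R}\Delta_{\hat{A}}\bigl((-1_{\hat{A}})^*\widehat{\mathcal{F}}\bigr)\cong\bigl((-1_{\hat{A}})^*\widehat{\mathcal{F}}\bigr)^\vee$ is a locally free sheaf in degree $0$; alternatively, once $\IT$ is established one may conclude the same by Grauert's theorem, using that $\bold{R}^i\hat{\mathcal{S}}\mathcal{F}=0$ for $i>0$ and that $h^0(A,\mathcal{F}\otimes\alpha)$ is constant on the reduced base $\hat{A}$.

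I expect the main obstacle to be purely organizational: pinning down the precise shifts and signs, the exact form of Mukai's duality--exchange isomorphism, and checking that the derived base-change identification with $H^i(A,\mathcal{F}\otimes\alpha)^\vee$ is the correct one; modulo these conventions the argument is formal. It is worth stressing that the hypothesis cannot be exploited through generic vanishing alone: being $\GV$ together with the constancy of $h^0$ only forces $h^0(A,\mathcal{F}\otimes\alpha)=\chi(\mathcal{F})$ for every $\alpha$, which does not by itself make the higher cohomology groups vanish, so the passage through the transform $\widehat{\mathcal{F}}$ (equivalently, Hacon's criterion) is genuinely needed.
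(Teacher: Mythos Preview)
The paper does not actually supply a proof of this proposition; it simply records it as \cite[Proposition 2.3]{LPS20}. Your argument is correct and is essentially the standard one in the literature (going back to Hacon's criterion and the Pareschi--Popa theory): use the $\GV$ hypothesis to reduce to the single sheaf $\widehat{\mathcal{F}}=\bold{R}^g\hat{\mathcal{S}}(\bold{R}\Delta_A\mathcal{F})$, identify its fibre dimensions and higher $\mathrm{Tor}$ groups with the numbers $h^i(A,\mathcal{F}\otimes\alpha)$ via derived base change and Grothendieck--Serre duality, and then deduce local freeness from the constancy of $h^0$. The sign/shift ambiguity you flag (whether the identification lands on $\alpha$ or $\alpha^{-1}$) is indeed harmless here, since the hypothesis is invariant under $\alpha\mapsto\alpha^{-1}$.
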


\begin{defi}
Let $\mathcal{F}$ be a coherent sheaf on an abelian variety $A$. The \emph{cohomological support loci} $V_l^i(A, \mathcal{F})$ for $i\in\N$ and $l\in\N$ are defined by
$$V_l^i(A, \mathcal{F})=\{\alpha\in\Pic^0(A)\mid\dim H^i(A, \mathcal{F}\otimes\alpha)\geq l\}.$$
We use $V^i(A, \mathcal{F})$ to denote $V_1^i(A, \mathcal{F})$.
\end{defi}

\begin{defi}
Let $A$ be an abelian variety. A \emph{torsion subvariety} of $A$ is a translate of an abelian subvariety of $A$ by a torsion point which is a closed point of finite order in $A$.
\end{defi}

Next we define the useful notion of continuous evaluation morphisms (cf. \cite{PP03, LPS20}) and give a simple lemma about it (cf. \cite[Proposition 3.1]{Deb06}).

\begin{defi}
Let $\mathcal{F}$ be a coherent sheaf on an abelian variety $A$. The \emph{continuous evaluation morphism} associated to the coherent sheaf $\mathcal{F}$ is defined by
$$e_{\mathcal{F}}\colon \bigoplus_{\alpha\in \Pic^0(A)\atop \mathrm{torsion}}H^0(A, \mathcal{F}\otimes\alpha)\otimes\alpha^{-1}\to \mathcal{F}$$
which is induced from the evaluation morphisms.
\end{defi}

\begin{lemma}\label{tor}
Let $\mathcal{F}$ be a coherent sheaf on an abelian variety $A$. Then there exists an isogeny $\varphi\colon A'\to A$ such that $\varphi^*\mathcal{F}$ is globally generated if and only if the continuous evaluation morphism $e_{\mathcal{F}}$ associated to $\mathcal{F}$ is surjective.
\end{lemma}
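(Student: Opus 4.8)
The plan is to prove the two implications separately, the common tool being the character decomposition $\varphi_*\mathcal{O}_{A'}\cong\bigoplus_{\alpha\in\hat{G}}\alpha$ of the pushforward of the structure sheaf along an isogeny $\varphi\colon A'\to A$ with kernel $G$, where $\hat{G}\cong\Ker\hat\varphi\subset\Pic^0(A)$ is a finite group, so that every $\alpha$ appearing is a torsion point.

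For the direction ``$e_{\mathcal{F}}$ surjective $\Rightarrow$ global generation after isogeny'': first I would reduce to a finite sub-sum. The image of $e_{\mathcal{F}}$ is the filtered union of the images of the partial evaluation maps over finite sets of torsion points, each of which is a coherent subsheaf of $\mathcal{F}$; by Noetherianity this union is attained by some finite set $S=\{\alpha_1,\dots,\alpha_n\}$, so $\bigoplus_{i=1}^n H^0(A,\mathcal{F}\otimes\alpha_i)\otimes\alpha_i^{-1}\to\mathcal{F}$ is already surjective. Now let $N$ be a positive integer annihilating each $\alpha_i$ in $\Pic^0(A)$ and take $\varphi=[N]_A\colon A\to A$, an isogeny; since $[N]^*$ acts by multiplication by $N$ on $\Pic^0$, we get $\varphi^*\alpha_i\cong\mathcal{O}_A$ for all $i$. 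Pulling the displayed surjection back along $\varphi$ (right exactness of pullback) and using $\varphi^*\alpha_i^{-1}\cong\mathcal{O}_{A'}$ gives a surjection $\bigoplus_i H^0(A,\mathcal{F}\otimes\alpha_i)\otimes\mathcal{O}_{A'}\twoheadrightarrow\varphi^*\mathcal{F}$ that factors through the evaluation morphism of global sections of $\varphi^*\mathcal{F}$, so $\varphi^*\mathcal{F}$ is globally generated.

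For the direction ``global generation after isogeny $\Rightarrow e_{\mathcal{F}}$ surjective'': suppose $\varphi\colon A'\to A$ is an isogeny with $\varphi^*\mathcal{F}$ globally generated, and write $G=\Ker\varphi$. Since $\varphi$ is finite, $\varphi_*$ is exact and the projection formula applies, so pushing forward the surjective evaluation map of $\varphi^*\mathcal{F}$ yields a surjection
$$\Bigl(\bigoplus_{\alpha\in\hat{G}}H^0(A,\mathcal{F}\otimes\alpha)\Bigr)\otimes_{\C}\Bigl(\bigoplus_{\beta\in\hat{G}}\beta\Bigr)\twoheadrightarrow\varphi_*\varphi^*\mathcal{F}\cong\bigoplus_{\gamma\in\hat{G}}\mathcal{F}\otimes\gamma,$$
using $H^0(A',\varphi^*\mathcal{F})\cong\bigoplus_{\alpha\in\hat{G}}H^0(A,\mathcal{F}\otimes\alpha)$. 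This map respects the $\hat{G}$-grading: the $(\alpha,\beta)$-summand of the source maps into the $\gamma=\alpha\beta$ summand of the target (multiplying a section of $\mathcal{F}\otimes\alpha$ against a section of $\beta$ lands in $\mathcal{F}\otimes\alpha\beta$). Hence each graded component is surjective; the $\gamma=\mathcal{O}_A$ component is precisely $\bigoplus_{\alpha\in\hat{G}}H^0(A,\mathcal{F}\otimes\alpha)\otimes\alpha^{-1}\to\mathcal{F}$, which is a finite sub-sum of $e_{\mathcal{F}}$, so $e_{\mathcal{F}}$ is surjective.

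The only mildly delicate points I anticipate are the bookkeeping of the $\hat{G}$-grading on $\varphi_*\varphi^*\mathcal{F}$ together with the compatibility of the pushed-forward evaluation map with it in the second direction, and the Noetherian reduction to finitely many torsion characters in the first; neither is a genuine obstacle, so the argument should be short.
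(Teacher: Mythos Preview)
Your proof is correct. The ``if'' direction matches the paper's argument essentially verbatim: reduce to finitely many torsion characters by coherence/Noetherianity, then kill them with a single isogeny.

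For the ``only if'' direction you take a genuinely different and more direct route than the paper. The paper introduces the subsheaf $\mathcal{G}=\Image e_{\mathcal{F}}\subseteq\mathcal{F}$, observes that $e_{\mathcal{G}}$ is automatically surjective, and then chooses a further isogeny $\xi\colon B\to A$ dominating both $\varphi$ and an isogeny trivializing finitely many characters for $\mathcal{G}$, so that $\xi^*\mathcal{G}$ and $\xi^*\mathcal{F}$ are both globally generated; it concludes $\xi^*\mathcal{G}=\xi^*\mathcal{F}$ by checking $H^0(B,\xi^*\mathcal{G})=H^0(B,\xi^*\mathcal{F})$ via the character decomposition of $\xi_*\mathcal{O}_B$, and descends by faithful flatness. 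Your argument bypasses $\mathcal{G}$ entirely: you push forward the surjective evaluation map on $A'$ along $\varphi$, decompose both sides using $\varphi_*\mathcal{O}_{A'}\cong\bigoplus_{\alpha\in\hat G}\alpha$, and read off surjectivity of the $\gamma=\mathcal{O}_A$ graded piece, which is a finite sub-sum of $e_{\mathcal{F}}$. The $\hat G$-equivariance claim is correct (the $\mathcal{O}_A$-algebra multiplication $\alpha\otimes\beta\to\alpha\beta$ on $\varphi_*\mathcal{O}_{A'}$ transports cleanly to the $\varphi_*\mathcal{O}_{A'}$-module structure on $\varphi_*\varphi^*\mathcal{F}$), so your bookkeeping worry is unfounded. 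Your approach is shorter for this lemma in isolation; the paper's detour through $\mathcal{G}$ is not wasted, however, since the same construction $\mathcal{G}=\Image e_{\mathcal{F}}$ reappears as the central object in the proof of Proposition~\ref{SNC}.
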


\begin{proof}
First, we prove the if part. Since $\mathcal{F}$ is coherent, there exist finitely many torsion line bundles $\alpha_1, \dots, \alpha_N$ such that
$$\bigoplus^{N}_{i=1}H^0(A, \mathcal{F}\otimes\alpha_i)\otimes\alpha_i^{-1}\to \mathcal{F}$$
is surjective. Then we can choose an isogeny $\varphi$ on $A$ such that $\varphi^*\alpha_i$ becomes trivial for each $1\leq i\leq N$ and thus $\varphi^*\mathcal{F}$ is globally generated. 

We prove the only if part now. Consider the sheaf $\mathcal{G}:=\Image e_{\mathcal{F}}\subseteq \mathcal{F}$. By the same argument, we can choose an isogeny $\psi$ such that $\psi^*\mathcal{G}$ is globally generated. By taking a fiber product of $\psi$ and $\varphi$, we can get a new isogeny $\xi\colon B\to A$ such that both $\xi^*\mathcal{G}$ and $\xi^*\mathcal{F}$ are globally generated. Since $\xi$ is \'{e}tale, we have that $\xi^*\mathcal{G}\subseteq\xi^*\mathcal{F}$. We only need to prove $\xi^*\mathcal{G}=\xi^*\mathcal{F}$ to deduce $\mathcal{G}=\mathcal{F}$ since $\xi$ is faithfully flat. We know that 
$$\xi_*\mathcal{O}_B\cong\bigoplus_{\alpha\in\Ker \hat{\xi}}\alpha$$
where $\hat{\xi}$ is the dual isogeny of $\xi$. We have that $H^0(A, \mathcal{G}\otimes\alpha)=H^0(A, \mathcal{F}\otimes\alpha)$ for every torsion line bundle $\alpha\in\Pic^0(A)$ by the definition of $\mathcal{G}$. By the base change theorem, we have 
$$H^0(B, \xi^*\mathcal{G})\cong H^0(A, \mathcal{G}\otimes\xi_*\mathcal{O}_B)\cong\bigoplus_{\alpha\in\Ker \hat{\xi}}H^0(A, \mathcal{G}\otimes\alpha)$$
$$\cong\bigoplus_{\alpha\in\Ker \hat{\xi}}H^0(A, \mathcal{F}\otimes\alpha)\cong H^0(A, \mathcal{F}\otimes\xi_*\mathcal{O}_B)\cong H^0(B, \xi^*\mathcal{F}).$$
Thus we have $\xi^*\mathcal{G}=\xi^*\mathcal{F}$ since $\xi^*\mathcal{G}$ and $\xi^*\mathcal{F}$ are globally generated.
\end{proof}

We now give the definition of the Chen-Jiang decomposition which is one of the main properties we will discuss in the following sections. The paper \cite{CJ18} justifies the name of this property.

\begin{defi}
Let $\mathcal{F}$ be a coherent sheaf on an abelian variety $A$. The sheaf $\mathcal{F}$ is said to have the \emph{Chen-Jiang decomposition} if $\mathcal{F}$ admits a finite direct sum decomposition
$$\mathcal{F}\cong \bigoplus_{i\in I}(\alpha_i\otimes p_i^*\mathcal{F}_i),$$
where each $A_i$ is an abelian variety, each $p_i\colon A\to A_i$ is a fibration, each $\mathcal{F}_i$ is a nonzero M-regular coherent sheaf on $A_i$, and each $\alpha_i\in\Pic^0(A)$ is a torsion line bundle.
\end{defi}

We include a useful proposition which is \cite[Proposition 3.6]{LPS20}.

\begin{prop}\label{sum}
Let $\mathcal{F}\cong \mathcal{F'}\oplus\mathcal{F''}$ be a coherent sheaf on an abelian variety $A$. Then $\mathcal{F}$ has the Chen-Jiang decomposition if and only if both $\mathcal{F'}$ and $\mathcal{F''}$ have the Chen-Jiang decomposition.
\end{prop}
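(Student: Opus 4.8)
The ``if'' direction is immediate: given Chen--Jiang decompositions $\mathcal{F}'\cong\bigoplus_{i\in I'}(\alpha_i\otimes p_i^{*}\mathcal{F}_i)$ and $\mathcal{F}''\cong\bigoplus_{i\in I''}(\alpha_i\otimes p_i^{*}\mathcal{F}_i)$, the sum over $I'\sqcup I''$ is one for $\mathcal{F}$. So the real content is that a direct summand of a sheaf having the Chen--Jiang decomposition again has one, and the plan is to deduce this from the Krull--Schmidt property of $\operatorname{Coh}(A)$.

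I would first record two elementary facts. (a) \emph{A direct summand of an M-regular sheaf is M-regular}: the Fourier--Mukai functor is additive, so a splitting $\mathcal{H}\cong\mathcal{H}_1\oplus\mathcal{H}_2$ gives $\Supp\mathbf{R}^{i}\hat{\mathcal{S}}\mathcal{H}_j\subseteq\Supp\mathbf{R}^{i}\hat{\mathcal{S}}\mathcal{H}$, so the codimension inequality is inherited; likewise a nonzero summand of a nonzero sheaf is nonzero. (b) For a fibration $p\colon A\to B$ of abelian varieties --- hence a flat morphism with $p_{*}\mathcal{O}_A=\mathcal{O}_B$ --- and any coherent sheaf $\mathcal{H}$ on $B$, flatness and the projection formula give
\[
\End_A(p^{*}\mathcal{H})\;\cong\;H^{0}\!\bigl(A,\,p^{*}\mathcal{H}om_B(\mathcal{H},\mathcal{H})\bigr)\;\cong\;H^{0}\!\bigl(B,\,\mathcal{H}om_B(\mathcal{H},\mathcal{H})\bigr)\;\cong\;\End_B(\mathcal{H}).
\]
Since twisting by an element of $\Pic^{0}(A)$ does not change the endomorphism ring, it follows that $\alpha\otimes p^{*}\mathcal{H}$ is indecomposable whenever $\mathcal{H}$ is.

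Now start from a Chen--Jiang decomposition $\mathcal{F}\cong\bigoplus_{i\in I}(\alpha_i\otimes p_i^{*}\mathcal{F}_i)$ and refine each $\mathcal{F}_i$ as a finite direct sum of indecomposable coherent sheaves $\mathcal{F}_{i,l}$ on $A_i$. By (a) each $\mathcal{F}_{i,l}$ is again a nonzero M-regular sheaf, and by (b) each $\alpha_i\otimes p_i^{*}\mathcal{F}_{i,l}$ is indecomposable; thus $\mathcal{F}\cong\bigoplus_{i,l}(\alpha_i\otimes p_i^{*}\mathcal{F}_{i,l})$ is a decomposition of $\mathcal{F}$ into indecomposables. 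Because $A$ is projective, $\operatorname{Coh}(A)$ is a $\C$-linear category with finite-dimensional $\operatorname{Hom}$-spaces in which idempotents split, hence a Krull--Schmidt category; consequently this decomposition is unique up to isomorphism and reordering, and \emph{every} direct summand of $\mathcal{F}$ is isomorphic to the direct sum of some subfamily of the $\alpha_i\otimes p_i^{*}\mathcal{F}_{i,l}$. Applying this to $\mathcal{F}'$ (and symmetrically to $\mathcal{F}''$) exhibits it as $\bigoplus_{(i,l)\in S}(\alpha_i\otimes p_i^{*}\mathcal{F}_{i,l})$ for some $S$, which is a Chen--Jiang decomposition.

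The routine-looking but genuinely load-bearing step is the endomorphism identity in (b): it is exactly what guarantees that the pulled-back building blocks stay indecomposable and so qualify as bona fide factors in a Krull--Schmidt decomposition, after which the classical fact that direct summands of an object are sub-sums of its indecomposable factors does the rest. I expect no serious obstacle beyond assembling these ingredients and checking the trivial edge cases ($\mathcal{F}'=0$, or summands being accidentally zero).
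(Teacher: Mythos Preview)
Your argument is correct. The paper does not give its own proof of this proposition but simply quotes \cite[Proposition~3.6]{LPS20}; the argument there is precisely the one you outline --- refine each Chen--Jiang summand $\mathcal{F}_i$ into indecomposables on $A_i$, use the endomorphism-ring identity in your step~(b) to see that these remain indecomposable after pulling back and twisting to $A$, and then invoke Atiyah's Krull--Schmidt theorem for coherent sheaves on a projective variety to conclude that any direct summand of $\mathcal{F}$ is isomorphic to a sub-sum of these blocks, hence inherits a Chen--Jiang decomposition.
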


Next, we give a lemma about the splitting of morphisms which is based on the minimal extension property of some special singular Hermitian metrics. For clarity, we will use the analytic language for the statement and the proof of it. For details, we refer to \cite{CP17, HPS18}.

\begin{lemma}\label{split}
Let $f$ be a projective surjective morphism from a log smooth klt pair $(X, \Delta)$ on a projective manifold $X$ to an abelian variety $A$. Then for every positive integer $N$ which is sufficiently big and divisible such that $f_*\mathcal{O}_X(N(K_X+\Delta))\neq0$, the sheaf $f_*\mathcal{O}_X(N(K_X+\Delta))$ admits a singular Hermitian metric with semipositive curvature and the minimal extension property. In particular, any nonzero morphism $f_*\mathcal{O}_X(N(K_X+\Delta))\to \mathcal{O}_A$ splits.
\end{lemma}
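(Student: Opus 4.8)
\textbf{Proof proposal for Lemma \ref{split}.}

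The plan is to build the desired singular Hermitian metric on $f_*\mathcal{O}_X(N(K_X+\Delta))$ directly from the theory of singular Hermitian metrics on pushforwards of adjoint-type bundles developed in \cite{CP17, HPS18}, and then deduce the splitting assertion formally from the minimal extension property. First I would fix the klt boundary $\Delta$ and choose a sufficiently divisible $N$ so that $N\Delta$ is integral and $N(K_X+\Delta)$ is Cartier; the hypothesis that $(X,\Delta)$ is log smooth means $\Delta$ is a reduced SNC divisor with coefficients in $(0,1)$, so the $\mathbb{Q}$-divisor $\Delta$ carries a natural singular metric whose curvature is the current of integration along $\Delta$ and whose multiplier ideal is $\mathcal{O}_X$ precisely because the pair is klt. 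The key input is the relative version of the Ohsawa–Takegoshi-type extension statement: for the line bundle $L = N(K_X+\Delta) = K_X + (N-1)(K_X+\Delta) + \Delta$, written as $K_X$ plus a line bundle equipped with a metric of semipositive curvature and trivial multiplier ideal, the sheaf $f_*\mathcal{O}_X(K_X \otimes L')$ (with $L' = (N-1)(K_X+\Delta)+\Delta$, say after passing to a line bundle power or using that $f_*\omega_{X/A}\otimes(\text{such }L')$ fits the hypotheses of \cite[\S 20 and \S 22]{HPS18}) admits a canonically defined singular Hermitian metric with semipositive curvature — this is exactly the content of the Hacon–Popa–Schnell packaging of the Cao–Păun positivity results — and by construction this metric has the minimal extension property. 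The subtlety here that requires "$N$ sufficiently big and divisible" is that one needs $(N-1)(K_X+\Delta)$ itself to admit a metric of the right kind, which on a log smooth klt pair one obtains from the klt structure: the associated multiplier ideal is trivial, and the curvature current is semipositive because $(N-1)(K_X+\Delta) + \Delta$ can be realized through an appropriate metric coming from the canonical bundle formula / the fact that this is an adjoint bundle with klt coefficients.

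Next, granting the metric, I would recall that the minimal extension property for a torsion-free sheaf $\mathcal{E}$ on a variety $A$ says: at a general point $a \in A$ and for any $v$ in the fiber $\mathcal{E}_a$, there is a local section of $\mathcal{E}$ near $a$ through $v$ whose sup-norm in the metric is controlled by $|v|$ times a universal constant depending only on dimensions; a fundamental consequence, proved in \cite{HPS18}, is that such a sheaf is "weakly positive" and, more to the point, that it has no nonzero quotient of negative degree, and in fact any quotient line bundle is pseudoeffective. The cleanest route to the splitting statement is the observation (which I expect is the real content being invoked) that a torsion-free sheaf with a singular Hermitian metric with semipositive curvature and the minimal extension property on an abelian variety is a GV-sheaf and, more relevantly here, behaves well under duality: the point is that $\mathcal{O}_A$ on an abelian variety $A$ has degree zero, so a surjection $\mathcal{E} \twoheadrightarrow \mathcal{O}_A$ together with the semipositivity forces the kernel to split off. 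Concretely, I would argue: given a nonzero map $\psi\colon \mathcal{E} = f_*\mathcal{O}_X(N(K_X+\Delta)) \to \mathcal{O}_A$, its image is an ideal sheaf $\mathcal{I}_Z \subseteq \mathcal{O}_A$ twisted by an effective divisor; the minimal extension property implies $\mathcal{E}$ (hence any quotient) cannot surject onto a sheaf that is "negative" in the relevant sense, which pins the image down to be $\mathcal{O}_A$ itself, i.e. $\psi$ is surjective. Then surjectivity plus the metric semipositivity (which is preserved by quotients with the minimal extension property) gives that the inclusion $\mathcal{O}_A = \ker(\cdot) $ — more precisely that the exact sequence $0 \to \mathcal{K} \to \mathcal{E} \to \mathcal{O}_A \to 0$ splits, because $\operatorname{Ext}^1(\mathcal{O}_A, \mathcal{K}) = H^1(A,\mathcal{K})$ vanishes for the relevant twist, or alternatively because the metric descends and the trivial quotient must be a metric direct summand.

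I expect the main obstacle to be the first paragraph: verifying precisely that the bundle $N(K_X+\Delta)$, for our klt pair with $N$ only rational a priori and $\Delta$ a $\mathbb{Q}$-divisor, can be put in the exact form required by the packaged statements of \cite{CP17, HPS18}, which are typically phrased for $f_*(\omega_{X/Y}\otimes L)$ with $L$ carrying a semipositively curved metric with trivial multiplier ideal — i.e. one must absorb the boundary $\Delta$ and the non-integral part of $m = N/N'$ into the metric on the "$L$" factor while keeping the multiplier ideal trivial, which is exactly where log smoothness and the klt condition are used, and where the phrase "sufficiently big and divisible $N$" earns its keep (one needs $N(K_X+\Delta)$ Cartier and one needs enough room to write $N(K_X+\Delta) - K_X$ as an adjoint bundle with a good metric). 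The splitting deduction at the end, by contrast, I expect to be essentially formal once the metric with the minimal extension property is in hand: it is a standard consequence that on an abelian variety such a sheaf has no nontrivial negative quotients, so any nonzero map to the degree-zero sheaf $\mathcal{O}_A$ is a split surjection. I would also double check the edge case where $f$ is generically finite (so $\mathcal{E}$ may be torsion-free of rank $>1$ but the "general fiber" is a point), since the cited analytic results are usually stated for fibrations; this should be handled by Stein factorization together with the behavior of the construction under finite base change, consistent with Definition \ref{general fiber}.
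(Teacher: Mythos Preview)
Your overall route is the paper's route: invoke the Cao--P\u{a}un construction \cite{CP17} to equip the auxiliary line bundle with a semipositive metric, apply \cite[Theorem 21.1 and Corollary 21.2]{HPS18} to obtain the singular Hermitian metric with the minimal extension property on the pushforward, and then cite \cite[Theorem 26.4]{HPS18} for the splitting. But your justification of the crucial middle step is not right. You claim that $(N-1)(K_X+\Delta)+\Delta$ carries a semipositive metric with \emph{trivial} multiplier ideal ``from the klt structure''; the klt condition only controls the multiplier ideal of $\Delta$, not of $(N-1)(K_X+\Delta)$, and there is no reason a semipositive metric on the latter should have trivial multiplier ideal. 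What \cite[Section 4]{CP17} actually supplies, for $N$ sufficiently big, is a relative Bergman-type metric $h_N$ on $L_N=(N-1)K_{X/A}+N\Delta$ with semipositive curvature such that the inclusion
\[
f_*\big(\mathcal{O}_X(K_{X/A}+L_N)\otimes\mathcal{I}(h_N)\big)\hookrightarrow f_*\mathcal{O}_X(K_{X/A}+L_N)
\]
is \emph{generically} an isomorphism; this weaker condition is precisely the hypothesis under which \cite[Theorem 21.1]{HPS18} produces the metric with the minimal extension property on the full pushforward. That is where ``$N$ sufficiently big'' is actually used, not in forcing a global triviality of the multiplier ideal.

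Your splitting argument also has a gap: the assertion that the short exact sequence $0\to\mathcal{K}\to\mathcal{E}\to\mathcal{O}_A\to 0$ splits because $H^1(A,\mathcal{K})$ vanishes is unjustified---there is no vanishing theorem available for $\mathcal{K}$ here. The paper simply invokes \cite[Theorem 26.4]{HPS18}, whose proof uses the metric with the minimal extension property directly (roughly, the induced metric on the quotient $\mathcal{O}_A$ must be a constant multiple of the flat metric, and an orthogonal complement exists), not an Ext computation.
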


\begin{proof}
We choose $N$ sufficiently divisible such that $N\Delta$ is an integral divisor. Define a new divisor $L_N:=(N-1)K_{X/A}+N\Delta$. By the discussion at the beginning of \cite[Section 4]{CP17}, if $N$ is sufficiently big, then there exists a singular Hermitian metric $h_N$ on the line bundle $\mathcal{O}_X(L_N)$ with semipositive curvature such that the inclusion
$$f_*(\mathcal{O}_X(K_{X/A}+L_N)\otimes \mathcal{I}(h_N))\hookrightarrow f_*\mathcal{O}_X(K_{X/A}+L_N)$$
is generically an isomorphism where $\mathcal{I}(h_N)\subseteq \mathcal{O}_X$ is the multiplier ideal sheaf associated to the singular Hermitian metric $h_N$. Thus we know the sheaf $f_*\mathcal{O}_X(N(K_X+\Delta))\cong f_*\mathcal{O}_X(K_{X/A}+L_N)$ admits a singular Hermitian metric with semipositive curvature and the minimal extension property by \cite[Theorem 21.1 and Corollary 21.2]{HPS18}. Then the property of splitting follows from \cite[Theorem 26.4]{HPS18}.
\end{proof}

Let $\varphi\colon A'\to A$ be an isogeny in Lemma \ref{split}. We have the following base change diagram.
	\begin{center}
	\begin{tikzcd}
			X' \arrow[r, "\varphi'"] \arrow[d, "f'"] & X \arrow[d, "f"] \\
			 A' \arrow[r, "\varphi" ] & A 
	\end{tikzcd}
	\end{center}
We consider the log smooth klt pair $(X', \Delta')$ given by $K_{X'}+\Delta'=\varphi'^*(K_X+\Delta)$. Then the same $N$ as in $f_*\mathcal{O}_X(N(K_X+\Delta))$ will make Lemma \ref{split} work for $f'_*\mathcal{O}_{X'}(N(K_{X'}+\Delta'))$ by the construction of the singular Hermitian metric on it. See \cite[Section 4]{CP17} for details.

\section{Positivity of pushforwards of klt pairs}\label{3}

In this section, we prove our main theorems. The next four statements are needed for the reduction steps. We start with a basic lemma which treats the case when $m=1$.

\begin{lemma}\label{m=1}
Let $f$ be a morphism from a klt pair $(X, \Delta)$ to an abelian variety $A$ and $D$ a Cartier divisor on $X$ such that $D\sim_{\Q}K_X+\Delta$. Then there exists a generically finite surjective morphism $h\colon Z\to X$ from a smooth projective variety $Z$ such that $R^if_*\mathcal{O}_X(D)$ is a direct summand of $R^i(f\circ h)_*\mathcal{O}_Z(K_Z)$ for every $i\in \N$. In particular, the sheaf $R^if_*\mathcal{O}_X(D)$ has the Chen-Jiang decomposition for every $i\in \N$.
\end{lemma}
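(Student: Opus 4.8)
The plan is to reduce the statement about a $\Q$-linear equivalence $D\sim_{\Q} K_X+\Delta$ to the integral case, and then to use a resolution together with a cyclic covering (or, more precisely, a relative-dualizing-sheaf argument) to realize $f_*\mathcal{O}_X(D)$ as a direct summand of a pushforward of a canonical bundle. First I would pass to a log resolution. Choose a log resolution $h_0\colon X_0\to X$ of $(X,\Delta)$ and write $K_{X_0}+\Delta_0 = h_0^*(K_X+\Delta) + E$, where $\Delta_0$ is effective with simple normal crossings support and $E$ is an effective $h_0$-exceptional $\Q$-divisor (here we use that $(X,\Delta)$ is klt, so the coefficients of $\Delta_0$ are $<1$ and $\lceil E\rceil$ can be arranged to be the round-up of a divisor with suitable coefficients). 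Since $D$ is Cartier and $D\sim_{\Q} K_X+\Delta$, a Fujita-type / Ambro--Kawamata argument shows that $h_0^*D + \lceil E' \rceil$ differs from $K_{X_0}+\Delta_0'$ by something nice, and because $h_0$ is birational we have $R^ih_{0*}$ concentrated in degree $0$ for the relevant sheaf, so $f_*\mathcal{O}_X(D)$ is a direct summand of (indeed equal to, up to the exceptional correction) a pushforward from $X_0$. The upshot of this first step is that we may and do assume $X$ is smooth and $(X,\Delta)$ is log smooth, at the cost of replacing $f$ by $f\circ h_0$.

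Next, with $(X,\Delta)$ log smooth and $D\sim_{\Q}K_X+\Delta$, I would clear denominators: pick the smallest $r\geq 1$ with $rD\sim r(K_X+\Delta)$ an integral linear equivalence with $r\Delta$ integral, take the associated section $s\in H^0(X, \mathcal{O}_X(rD-r(K_X+\Delta)))$ — which after the identification is essentially the section cutting out $r\Delta$ — and form the degree-$r$ cyclic cover $\pi\colon Y\to X$ branched along $r\Delta$, normalized and then resolved to a smooth projective $Z$ with $h\colon Z\to X$ generically finite and surjective. The standard computation for cyclic covers (as in Esnault--Viehweg / the proof of Theorem 3.1-type results) gives a splitting $\pi_*\mathcal{O}_Y(K_Y) \cong \bigoplus_{j=0}^{r-1}\mathcal{O}_X(K_X + \lceil j\Delta\rceil + (\text{line bundle terms}))$, and the summand $j=1$ (or the appropriate one) is exactly $\mathcal{O}_X(K_X+\Delta') $ with $\Delta'=\lceil\Delta\rceil$; since $\Delta$ has coefficients $<1$, $\lceil\Delta\rceil$ is supported on a reduced snc divisor, and one checks $\mathcal{O}_X(K_X+\lceil\Delta\rceil)$ agrees with $\mathcal{O}_X(D)$ after a further blow-up absorbing the discrepancy — alternatively one works on $Z$ where $h^*D$ is genuinely $K_Z$ plus effective exceptional stuff and $R^ih_*$ is a direct summand by the local statement. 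In either presentation $\mathcal{O}_X(D)$ becomes a direct summand of $h_*\mathcal{O}_Z(K_Z)$ as an $\mathcal{O}_X$-module, and this splitting is compatible with pushing forward by $f$; by the degeneration of the relevant Leray spectral sequence (Koll\'ar's theorem: $R^\bullet(f\circ h)_*\mathcal{O}_Z(K_Z)$ and its Leray filtration over $f$ both split), we get that $R^if_*\mathcal{O}_X(D)$ is a direct summand of $R^i(f\circ h)_*\mathcal{O}_Z(K_Z)$ for every $i$.

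For the final sentence, once $R^if_*\mathcal{O}_X(D)$ is exhibited as a direct summand of $R^i(f\circ h)_*\mathcal{O}_Z(K_Z)$, I invoke that the latter has the Chen-Jiang decomposition — this is the theorem of \cite{CJ18} (refined in \cite{PPS17, LPS20}) for higher direct images of canonical bundles under a morphism to an abelian variety — together with Proposition \ref{sum}, which says that a direct summand of a sheaf with the Chen-Jiang decomposition again has it. This immediately yields the "in particular" clause.

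\textbf{Main obstacle.} The delicate point is the bookkeeping with the fractional and exceptional divisors: arranging simultaneously that (a) the round-up/round-down corrections on the resolution are \emph{exceptional} (so that pushing down recovers exactly $\mathcal{O}_X(D)$ and not a twist), (b) the cyclic cover is built from a genuinely integral section after clearing denominators in $D\sim_{\Q}K_X+\Delta$, and (c) the identification of the relevant summand of $\pi_*\omega_Y$ with $\mathcal{O}_X(D)$ (rather than with $\mathcal{O}_X(D)$ twisted by some effective divisor) holds on the nose. Each of these is standard in isolation, but making them cohere — especially tracking that the direct-summand property is preserved at every blow-up and under $f_*$ — is where the real care is needed; the rest is a formal consequence of Koll\'ar's splitting, the Chen-Jiang decomposition for $\omega$-pushforwards, and Proposition \ref{sum}.
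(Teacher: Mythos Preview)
Your approach is essentially the same as the paper's: pass to a log resolution (using Koll\'ar's torsion-freeness to kill the higher $R^i\mu_*$), then take the degree-$N$ cyclic cover branched along $N\Delta$ and resolve to exhibit $\mathcal{O}_X(D)$ as a direct summand of $h_*\omega_Z$, and conclude via the Leray spectral sequence together with \cite{PPS17} and Proposition~\ref{sum}. Two small corrections: in the Viehweg summand formula the relevant term is $\omega_X\otimes L\otimes\mathcal{O}_X(-\lfloor\Delta\rfloor)$, and it is $\lfloor\Delta\rfloor=0$ (not $\lceil\Delta\rceil$) for a klt boundary that makes this equal $\mathcal{O}_X(D)$ on the nose---no further blow-up is needed; and since $h$ is generically finite, Grauert--Riemenschneider already gives $R^ih_*\omega_Z=0$ for $i>0$, so the spectral sequence degenerates trivially and Koll\'ar's splitting theorem is unnecessary here.
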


\begin{proof}
We take a log resolution of $(X, \Delta)$ denoted by $\mu\colon Y\to X$ as in the following diagram. 
	\begin{center}
	\begin{tikzcd}
			Y \arrow[r, "\mu"] \arrow[dr, "g" swap] & X \arrow[d, "f"] \\
			& A 
	\end{tikzcd}
	\end{center}
Then we have
$$K_Y+\Delta_Y\sim_\mathbb{Q}\mu^*(K_{X}+\Delta)+E,$$
where the $\Q$-divisors $\Delta_Y$ and $E$ are effective and have no common components and $E$ is $\mu$-exceptional. We also have
$$K_Y+\Delta_Y+\lceil E\rceil-E\sim_\mathbb{Q}\mu^*(K_{X}+\Delta)+\lceil E\rceil.$$
Let $\Delta'_Y$ be $\Delta_Y+\lceil E\rceil-E$, then the pair $(Y, \Delta'_Y)$ is klt and log smooth since $(X, \Delta)$ is klt. We know that $\mu_*\mathcal{O}_Y(\mu^*D+\lceil E\rceil)\cong \mathcal{O}_X(D)$ since $\lceil E\rceil$ is $\mu$-exceptional and effective. By \cite[Corollary 10.15]{Kol95}, $R^i\mu_*\mathcal{O}_Y(\mu^*D+\lceil E\rceil)$ is torsion-free for every $i\in \N$ since $\mu^*D+\lceil E\rceil\sim_{\Q}K_Y+\Delta'_Y$ and the pair $(Y, \Delta'_Y)$ is klt and log smooth. Thus $R^i\mu_*\mathcal{O}_Y(\mu^*D+\lceil E\rceil)=0$ for $i>0$, since it is $0$ on an open subset of $X$ and torsion-free. Then we deduce that
$$R^ig_*\mathcal{O}_Y(\mu^*D+\lceil E\rceil)\cong R^if_*(\mu_*\mathcal{O}_Y(\mu^*D+\lceil E\rceil))\cong R^if_*\mathcal{O}_X(D)$$
for every $i\in \N$ by Grothendieck spectral sequence. Thus we can assume $(X, \Delta)$ is log smooth from the start.

We can choose a positive integer $N$ such that $ND\sim N(K_X+\Delta)$ and $N\Delta$ becomes an integral divisor. Then we have  $N(D-K_X)\sim N\Delta$. Thus we can take the associated branched covering along $N\Delta$ and resolve the singularities. This gives us a generically finite surjective morphism $h\colon Z\to X$ of degree $N$. By \cite[Lemma 2.3]{Vie83}, $h_*{O}_Z(K_Z)$ contains as a direct summand the sheaf 
$$\mathcal{O}_X(D-K_X+K_X)\otimes \mathcal{O}_X(-\lfloor \frac{1}{N}\cdot N\Delta \rfloor)\cong \mathcal{O}_X(D).$$
By Grauert-Riemenschneider vanishing theorem, we have that $R^ih_*\mathcal{O}_Z(K_Z)=0$ for $i>0$. We deduce that 
$$R^i(f\circ h)_*\mathcal{O}_Z(K_Z)\cong R^if_*h_*\mathcal{O}_Z(K_Z)$$ 
contains $R^if_*\mathcal{O}_X(D)$ as a direct summand for every $i\in \N$ by Grothendieck spectral sequence. Since $R^i(f\circ h)_*\mathcal{O}_Z(K_Z)$ has the Chen-Jiang decomposition for every $i\in \N$ by \cite[Theorem A]{PPS17}, the sheaf $R^if_*\mathcal{O}_X(D)$ has the Chen-Jiang decomposition for every $i\in \N$ by Proposition \ref{sum}.
\end{proof}

Our next proposition establishes an equivalence between four statements about the positivity of pushforwards of klt pairs.

\begin{prop}\label{equi}
Let $f$ be a morphism from a klt pair $(X, \Delta)$ to an abelian variety $A$, $m\geq1$ a rational number and $D$ a Cartier divisor on $X$ such that $D\sim_{\Q}m(K_X+\Delta)$. Then the following four statements are equivalent:
\begin{enumerate}

	\item[$\mathrm{(i)}$] The sheaf $f_*\mathcal{O}_X(D)$ has the Chen-Jiang decomposition.

	\item[$\mathrm{(ii)}$] There exists an isogeny $\varphi\colon A'\to A$ such that $\varphi^*f_*\mathcal{O}_X(D)$ is globally generated.
	
	\begin{center}
	\begin{tikzcd}
			X' \arrow[r, "\varphi'"] \arrow[d, "f'"] & X \arrow[d, "f"] \\
			 A' \arrow[r, "\varphi" ] & A 
	\end{tikzcd}
	\end{center}
	
	\item[$\mathrm{(iii)}$] There exists a generically finite surjective morphism $h\colon Z\to X$ from a smooth projective variety $Z$ such that $f_*\mathcal{O}_X(D)$ is a direct summand of $(f\circ h)_*\mathcal{O}_Z(K_Z)$.
	
	\item[$\mathrm{(iv)}$] The continuous evaluation morphism $e_{f_*\mathcal{O}_X(D)}$ associated to $f_*\mathcal{O}_X(D)$ is surjective.
\end{enumerate}
\end{prop}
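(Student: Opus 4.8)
The plan is to run the cycle $\mathrm{(iii)}\Rightarrow\mathrm{(i)}\Rightarrow\mathrm{(iv)}\Rightarrow\mathrm{(ii)}\Rightarrow\mathrm{(iii)}$, noting first that $\mathrm{(ii)}\Leftrightarrow\mathrm{(iv)}$ is precisely Lemma~\ref{tor} applied to $\mathcal{F}=f_*\mathcal{O}_X(D)$. Two of the arrows are essentially formal. For $\mathrm{(iii)}\Rightarrow\mathrm{(i)}$: the composition $f\circ h\colon Z\to A$ is a morphism from a smooth projective variety to an abelian variety, so $(f\circ h)_*\mathcal{O}_Z(K_Z)$ has the Chen--Jiang decomposition by \cite[Theorem A]{PPS17}, and since $f_*\mathcal{O}_X(D)$ is a direct summand of it, Proposition~\ref{sum} gives $\mathrm{(i)}$. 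For $\mathrm{(i)}\Rightarrow\mathrm{(iv)}$: write $f_*\mathcal{O}_X(D)\cong\bigoplus_{i\in I}(\alpha_i\otimes p_i^*\mathcal{F}_i)$ with each $\mathcal{F}_i$ M-regular on $A_i$. M-regular sheaves are continuously globally generated by \cite{PP03}, so, since torsion points are dense in $\Pic^0(A_i)$ and $\mathcal{F}_i$ is coherent, the continuous evaluation morphism $e_{\mathcal{F}_i}$ is surjective. Surjectivity of the continuous evaluation morphism is preserved under pullback along the fibration $p_i$ (identifying $H^0(A,p_i^*(\mathcal{F}_i\otimes\gamma))$ with $H^0(A_i,\mathcal{F}_i\otimes\gamma)$ via the projection formula and $p_{i*}\mathcal{O}_A=\mathcal{O}_{A_i}$), under tensoring by the torsion line bundle $\alpha_i$, and under finite direct sums; hence $e_{f_*\mathcal{O}_X(D)}$ is surjective, and $\mathrm{(iv)}$ follows. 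Combined with Lemma~\ref{tor} this also gives $\mathrm{(iv)}\Rightarrow\mathrm{(ii)}$.

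The substantial arrow is $\mathrm{(ii)}\Rightarrow\mathrm{(iii)}$, and this is where the hypothesis $D\sim_{\Q}m(K_X+\Delta)$ genuinely enters. If $f_*\mathcal{O}_X(D)=0$ the statement is vacuous, so assume it is nonzero. Base changing along the isogeny $\varphi$ of $\mathrm{(ii)}$: the morphism $\varphi'$ is finite \'etale, so the pair $(X',\Delta')$ with $K_{X'}+\Delta'=\varphi'^*(K_X+\Delta)$ is again klt, $f'_*\mathcal{O}_{X'}(\varphi'^*D)\cong\varphi^*f_*\mathcal{O}_X(D)$ is globally generated by flat base change, $\varphi'^*D\sim_{\Q}m(K_{X'}+\Delta')$, and $f_*\mathcal{O}_X(D)$ is a direct summand of $\varphi_*f'_*\mathcal{O}_{X'}(\varphi'^*D)=f_*(\varphi'_*\mathcal{O}_{X'}(\varphi'^*D))$; since $\varphi'$ is generically finite and surjective, it therefore suffices to prove $\mathrm{(iii)}$ after replacing $(X,\Delta,D,f)$ by $(X',\Delta',\varphi'^*D,f')$, i.e.\ we may assume $f_*\mathcal{O}_X(D)$ is itself globally generated. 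Then $H^0(X,\mathcal{O}_X(D))=H^0(A,f_*\mathcal{O}_X(D))\neq0$, so $\kappa(X,D)\geq0$, and since $K_X+\Delta\sim_{\Q}\tfrac1m D$ we get that $D-K_X\sim_{\Q}(m-1)(K_X+\Delta)+\Delta\sim_{\Q}\tfrac{m-1}{m}D+\Delta$ is $\Q$-linearly equivalent to an effective $\Q$-divisor. For $m=1$ this last divisor is $\Q$-linearly trivial and Lemma~\ref{m=1} already yields $\mathrm{(iii)}$, so from now on assume $m>1$.

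For $m>1$ I would adapt Viehweg's cyclic covering trick: after a log resolution (handled as in Lemma~\ref{m=1}, using the vanishing of the higher direct images) take, for $N$ sufficiently large and divisible, a general member $B$ of the nonempty linear system $|N(D-K_X)|$, form the associated degree-$N$ branched cover along $B$, resolve it to a smooth projective $Z$ with $h\colon Z\to X$ generically finite and surjective, and apply \cite[Lemma 2.3]{Vie83} together with Grauert--Riemenschneider vanishing to conclude that $\mathcal{O}_X\bigl(D-\lfloor\tfrac1N B\rfloor\bigr)$ is a direct summand of $h_*\mathcal{O}_Z(K_Z)$, hence $f_*\mathcal{O}_X\bigl(D-\lfloor\tfrac1N B\rfloor\bigr)$ is a direct summand of $(f\circ h)_*\mathcal{O}_Z(K_Z)$. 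The crux — and the place where, unlike in the case $m=1$, the argument must do something — is that the fixed part of $|N(D-K_X)|$ need not have coefficients $<N$, so $\lfloor\tfrac1N B\rfloor$ need not vanish and one only recovers a subsheaf of $\mathcal{O}_X(D)$. Arranging $\lfloor\tfrac1N B\rfloor=0$ requires first passing to a suitable birational model on which the relevant divisor is sufficiently movable, which is exactly the role of the technique of \cite[Theorem 1.7]{PS14}; I expect essentially all of the difficulty to lie here rather than in the cohomological bookkeeping. Once $\lfloor\tfrac1N B\rfloor=0$ is achieved, the produced summand is exactly $\mathcal{O}_X(D)$, and transporting back along the generically finite surjective morphism $\varphi'$ closes the cycle.
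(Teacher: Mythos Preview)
Your formal arrows $\mathrm{(iii)}\Rightarrow\mathrm{(i)}$, $\mathrm{(i)}\Rightarrow\mathrm{(iv)}$, and $\mathrm{(ii)}\Leftrightarrow\mathrm{(iv)}$ are correct and match the paper (the paper does $\mathrm{(i)}\Rightarrow\mathrm{(ii)}$ by citing \cite[Theorem 5.1]{LPS20}, which is essentially the same as your direct argument for $\mathrm{(i)}\Rightarrow\mathrm{(iv)}$). Your reduction of $\mathrm{(ii)}\Rightarrow\mathrm{(iii)}$ to the case where $f_*\mathcal{O}_X(D)$ is itself globally generated is also correct, as is the observation that $m=1$ is handled by Lemma~\ref{m=1}.

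The gap is in the case $m>1$ of $\mathrm{(ii)}\Rightarrow\mathrm{(iii)}$, which you explicitly leave incomplete. Your plan is to run Viehweg's construction on $|N(D-K_X)|$ and then invoke the technique of \cite[Theorem 1.7]{PS14} to ``pass to a birational model where the relevant divisor is sufficiently movable'' so that $\lfloor\tfrac1N B\rfloor=0$. This mischaracterizes what the PS14 technique does, and the paper in fact says in the introduction that Viehweg's trick does not apply when $m>1$. The paper never tries to kill the round-down. Instead, after passing to a log resolution $Y$ with $G:=\mu^*D+\lceil mE\rceil\sim_{\Q}m(K_Y+\Delta_Y')$, it uses global generation of $g_*\mathcal{O}_Y(G)\cong f_*\mathcal{O}_X(D)$ to produce, by Bertini, a reduced divisor $H$ $\Q$-linearly equivalent to $G$ minus its relative base divisor $G'$. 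Setting $T=\big\lfloor \Delta_Y'+\tfrac{m-1}{m}G'\big\rfloor$, one checks $0\le T\le G'$ (using that $(Y,\Delta_Y')$ is klt) and that
\[
G-T\ \sim_{\Q}\ K_Y+\tfrac{m-1}{m}H+\bigl(\Delta_Y'+\tfrac{m-1}{m}G'-T\bigr),
\]
where the right-hand boundary is klt by construction. Because $T\le G'$ lies under the relative base divisor, one has $g_*\mathcal{O}_Y(G-T)\cong g_*\mathcal{O}_Y(G)\cong f_*\mathcal{O}_X(D)$, and now Lemma~\ref{m=1} applied to $G-T$ produces the required $Z$. So the key idea is to \emph{subtract} a controlled divisor $T$ supported on the relative base locus (leaving the pushforward unchanged) rather than to make any linear system base-point-free; and the crucial input is global generation of $f_*\mathcal{O}_X(D)$, which gives the free part $H$ needed to build a klt boundary, not any movability of $|N(D-K_X)|$.
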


\begin{proof}
We can assume that $f_*\mathcal{O}_X(D)\neq0$. The fact that statement (i) implies (ii) follows from the proof of \cite[Theorem 5.1]{LPS20}. 

We now prove that statement (ii) implies (iii). We have an isogeny $\varphi\colon A'\to A$ such that $\varphi^*f_*\mathcal{O}_X(D)$ is globally generated. We define a $\Q$-divisor $\Delta'$ by $K_{X'}+\Delta'=\varphi'^*(K_X+\Delta)$. Since $\varphi'$ is an \'etale morphism, the new pair $(X', \Delta')$ is klt and $\Delta'$ is effective. Define $D'$ as $\pi^*D$ then we have $D'\sim_{\Q}m(K_{X'}+\Delta')$. By the flat base change theorem, we know that $f'_*\mathcal{O}_{X'}(D')\cong\varphi^*f_*\mathcal{O}_X(D)$ is globally generated. 

Next, we consider the following adjoint morphism 
$$f'^*f'_*\mathcal{O}_{X'}(D')\to \mathcal{O}_{X'}(D').$$
The image of this morphism is $D'\otimes\mathcal{I}$ and $\mathcal{I}$ is the relative base ideal of $D'$. We can take a log resolution $\mu\colon Y\to X'$ of $\mathcal{I}$ and $(X', \Delta')$ as in the following diagram. 
	\begin{center}
	\begin{tikzcd}
			Y \arrow[r, "\mu"] \arrow[dr, "g" swap] & X' \arrow[r, "\varphi'"] \arrow[d, "f'"] \arrow[dr] & X \arrow[d, "f"] \\
			& A' \arrow[r, "\varphi" ] & A 
	\end{tikzcd}
	\end{center}
Then we have
	$$K_Y+\Delta_Y\sim_\mathbb{Q}\mu^*(K_{X'}+\Delta')+E,$$
where the $\Q$-divisors $\Delta_Y$ and $E$ are effective and have no common components and $E$ is $\mu$-exceptional. We also have that the image of the new adjoint morphism 
$$g^*g_*\mathcal{O}_{Y}(D_Y)\to \mathcal{O}_{Y}(D_Y)$$
is a line bundle $\mathcal{O}_{Y}(D_Y-F)$ where $D_Y=\mu^*D'$ is a Cartier divisor, $F$ is an effective divisor and $\Delta_Y+E+F$ has simple normal crossings support. Since $(X', \Delta')$ is klt, we know $(Y, \Delta_Y':=\Delta_Y+\frac{\lceil mE\rceil}{m}-E)$ is also klt and  
	$$m(K_Y+\Delta_Y')\sim_\mathbb{Q}\mu^*(m(K_{X'}+\Delta'))+\lceil mE\rceil\sim_{\Q} D_Y+\lceil mE\rceil.$$
Denote $D_Y+\lceil mE\rceil$ by $G$. We have $g_*\mathcal{O}_{Y}(G)\cong f'_*\mathcal{O}_{X'}(D')$ since the effective divisor $\lceil mE\rceil$ is $\mu$-exceptional and thus $g_*\mathcal{O}_{Y}(G)$ is globally generated. The image of the adjoint morphism
$$g^*g_*\mathcal{O}_{Y}(G)\to \mathcal{O}_{Y}(G)$$
is $\mathcal{O}_{Y}(G-F-\lceil mE\rceil)$. Let $G'$ be $F+\lceil mE\rceil$. We claim that $g_*\mathcal{O}_{Y}(G-G'')\cong g_*\mathcal{O}_{Y}(G)$ for any effective divisor $G''\leq G'$. This is because we can factor the adjoint morphism as
$$g^*g_*\mathcal{O}_{Y}(G)\to \mathcal{O}_{Y}(G-G')\hookrightarrow \mathcal{O}_{Y}(G-G'')\hookrightarrow \mathcal{O}_{Y}(G)$$
and the morphism induced from pushforwards
$$g_*\mathcal{O}_{Y}(G)\to g_*\mathcal{O}_{Y}(G-G')\hookrightarrow g_*\mathcal{O}_{Y}(G-G'')\hookrightarrow g_*\mathcal{O}_{Y}(G)$$
is identity.

We claim that we can find an effective divisor $T\leq G'$ and a klt pair $(Y, N)$ such that $G-T\sim_{\Q}K_Y+N$. To do this, we use a technique from the proof of \cite[Theorem 1.7]{PS14}. Since $g_*\mathcal{O}_{Y}(G)$ is globally generated, we have $g^*g_*\mathcal{O}_{Y}(G)$ is globally generated and so is $\mathcal{O}_{Y}(G-G')$. By Bertini's theorem, we can pick a general Cartier divisor $H\sim G-G'$ which is reduced and effective such that $H$ and $\Delta_Y'+G'$ have no common components and $H+\Delta_Y'+G'$ has simple normal crossings support. We define an effective divisor $T$ as
$$T:=\bigg\lfloor \Delta_Y'+\frac{m-1}{m}G' \bigg\rfloor.$$
We have $T\leq G'$ since the coefficient of each irreducible component of $\Delta_Y'$ is smaller than $1$. We deduce
$$G-T\sim_{\Q}m(K_Y+\Delta_Y')-T=K_Y+\Delta_Y'+(m-1)(K_Y+\Delta_Y')-T$$
$$\sim_{\Q}K_Y+\Delta_Y'+\frac{m-1}{m}G-T\sim_{\Q}K_Y+\Delta_Y'+\frac{m-1}{m}(G'+H)-T$$
$$=K_Y+\frac{m-1}{m}H+\Delta_Y'+\frac{m-1}{m}G'-\bigg\lfloor \Delta_Y'+\frac{m-1}{m}G' \bigg\rfloor.$$
We define $N$ as
$$N:=\frac{m-1}{m}H+\Delta_Y'+\frac{m-1}{m}G'-\bigg\lfloor \Delta_Y'+\frac{m-1}{m}G' \bigg\rfloor.$$
The effective $\Q$-divisor $N$ has simple normal crossings support and the coefficient of each irreducible component of itself is smaller than $1$. Thus $(Y, N)$ is a klt pair and we have proved our claim.

We know that $g_*\mathcal{O}_{Y}(G-T)\cong g_*\mathcal{O}_{Y}(G)$ since $0\leq T\leq G'$. Thus we have
$$(\varphi\circ g)_*\mathcal{O}_{Y}(G-T)\cong (\varphi\circ g)_*\mathcal{O}_{Y}(G)\cong \varphi_*f'_*\mathcal{O}_{X'}(D')$$
$$\cong f_*\varphi'_*\mathcal{O}_{X'}(\varphi'^*D)\cong f_*(\mathcal{O}_{X}(D)\otimes\varphi'_*\mathcal{O}_{X'}).$$
We know that $\mathcal{O}_{X}$ is a direct summand of $\varphi'_*\mathcal{O}_{X'}$. Then $f_*(\mathcal{O}_{X}(D))$ is a direct summand of $(\varphi\circ g)_*\mathcal{O}_{Y}(G-T)$. Since $G-T\sim_{\Q}K_Y+N$, there exists a generically finite surjective morphism $h'\colon Z\to Y$ from a smooth projective variety $Z$ as in the following diagram such that $g_*\mathcal{O}_{Y}(G-T)$ is a direct summand of $(g\circ h')_*\mathcal{O}_Z(K_Z)$ by Lemma \ref{m=1}. 
	\begin{center}
	\begin{tikzcd}
		Z \arrow[r, "h'"] &Y \arrow[r, "\mu"] \arrow[dr, "g"] & X' \arrow[r, "\varphi'"] \arrow[d, "f'"] \arrow[dr] & X \arrow[d, "f"] \\
			&& A' \arrow[r, "\varphi" ] & A 
	\end{tikzcd}
	\end{center}
Then $(\varphi\circ g)_*\mathcal{O}_{Y}(G-T)$ is a direct summand of $(\varphi\circ g\circ h')_*\mathcal{O}_Z(K_Z)$ and so is $f_*(\mathcal{O}_{X}(D))$. Define the morphism $h\colon Z\to X$ as $\varphi'\circ\mu\circ h'$. We have that $\varphi\circ g\circ h'=f\circ h$ and $h$ is a generically finite surjective morphism such that $f_*(\mathcal{O}_{X}(D))$ is a direct summand of $(f\circ h)_*\mathcal{O}_Z(K_Z)$.

Next, we prove that statement (iii) implies (i). It follows from \cite[Theorem A]{PPS17} and Proposition \ref{sum}. 

Lemma \ref{tor} implies that statements (ii) and (iv) are equivalent.
\end{proof}

\begin{coro}\label{red}
Let $f$ be a morphism from a klt pair $(X, \Delta)$ to an abelian variety $A$, $m\geq1$ a rational number and $D$ a Cartier divisor on $X$ such that $D\sim_{\Q}m(K_X+\Delta)$. Let $\varphi\colon A'\to A$ be an isogeny. Then $f_*\mathcal{O}_X(D)$ has the Chen-Jiang decomposition if and only if $f'_*\mathcal{O}_{X'}(D')$ has the Chen-Jiang decomposition where $D'=\varphi'^*D$.

	\begin{center}
	\begin{tikzcd}
			X' \arrow[r, "\varphi'"] \arrow[d, "f'"] & X \arrow[d, "f"] \\
			 A' \arrow[r, "\varphi" ] & A 
	\end{tikzcd}
	\end{center}
	
\end{coro}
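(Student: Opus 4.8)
The plan is to reduce Corollary~\ref{red} to Proposition~\ref{equi}, which characterizes the Chen--Jiang decomposition property through the condition of being globally generated after pullback by \emph{some} isogeny; this condition transforms transparently under the isogeny $\varphi$.

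First I would fix the set-up. Since $\varphi$ is an isogeny it is finite and \'etale, hence so is its base change $\varphi'\colon X'\to X$; defining $\Delta'$ by $K_{X'}+\Delta'=\varphi'^*(K_X+\Delta)$, the pair $(X',\Delta')$ is klt with $\Delta'$ effective, $D'=\varphi'^*D$ is Cartier, and pulling back $D\sim_{\Q}m(K_X+\Delta)$ gives $D'\sim_{\Q}m(K_{X'}+\Delta')$. Thus Proposition~\ref{equi} applies to $f\colon(X,\Delta)\to A$ with $D$ and to $f'\colon(X',\Delta')\to A'$ with $D'$. By flat base change $f'_*\mathcal{O}_{X'}(D')\cong\varphi^* f_*\mathcal{O}_X(D)$, and since $\varphi$ is faithfully flat the two sheaves vanish simultaneously, so we may assume both are nonzero. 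By Proposition~\ref{equi} it now suffices to prove: there is an isogeny onto $A$ after pullback by which $f_*\mathcal{O}_X(D)$ is globally generated if and only if there is an isogeny onto $A'$ after pullback by which $f'_*\mathcal{O}_{X'}(D')$ is globally generated.

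For the first direction, suppose $\psi'\colon B\to A'$ is an isogeny with $\psi'^* f'_*\mathcal{O}_{X'}(D')$ globally generated. Then $\varphi\circ\psi'\colon B\to A$ is an isogeny and $(\varphi\circ\psi')^* f_*\mathcal{O}_X(D)\cong\psi'^*\varphi^* f_*\mathcal{O}_X(D)\cong\psi'^* f'_*\mathcal{O}_{X'}(D')$ is globally generated, so $f_*\mathcal{O}_X(D)$ has the Chen--Jiang decomposition by Proposition~\ref{equi}. Conversely, suppose $\psi\colon B\to A$ is an isogeny with $\psi^* f_*\mathcal{O}_X(D)$ globally generated. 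Form the fiber product $B\times_A A'$, a finite \'etale group scheme over $A'$, and let $C$ be its connected component of the identity; the two projections give isogenies $\rho\colon C\to A'$ and $\sigma\colon C\to B$ with $\varphi\circ\rho=\psi\circ\sigma$. Then $\rho^* f'_*\mathcal{O}_{X'}(D')\cong\rho^*\varphi^* f_*\mathcal{O}_X(D)\cong\sigma^*\psi^* f_*\mathcal{O}_X(D)$ is globally generated as the pullback of a globally generated sheaf, hence $f'_*\mathcal{O}_{X'}(D')$ has the Chen--Jiang decomposition by Proposition~\ref{equi}.

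Once Proposition~\ref{equi} is available the argument is essentially formal, so there is no serious obstacle; the only point that requires a little care is the reverse implication, where global generation cannot be descended along $\varphi$ directly and one must pass to a common isogeny dominating both $\varphi$ and $\psi$ --- realized above by the fiber product, or equivalently by factoring a suitable multiplication map $[n]_A$ as $\psi\circ\psi''=\varphi\circ\lambda$ with $\psi''$ and $\lambda$ isogenies. A more hands-on alternative would combine Proposition~\ref{sum}, the projection formula, and the decomposition $\varphi_*\mathcal{O}_{A'}\cong\bigoplus_{\beta\in\Ker\hat\varphi}\beta$ to split $\varphi_* f'_*\mathcal{O}_{X'}(D')\cong\bigoplus_{\beta}\big(f_*\mathcal{O}_X(D)\otimes\beta\big)$ into torsion twists of $f_*\mathcal{O}_X(D)$, but the route through Proposition~\ref{equi} is cleaner.
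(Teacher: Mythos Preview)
Your proof is correct and follows essentially the same approach as the paper: both directions go through Proposition~\ref{equi} and the characterization via global generation after an isogeny, with the ``only if'' direction handled by a fiber product of the two isogenies. You are in fact slightly more careful than the paper in passing to the identity component of $B\times_A A'$ to ensure the resulting source is an abelian variety, whereas the paper simply asserts that the projections from the fiber product are isogenies.
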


\begin{proof}
We first prove the if part. By Proposition \ref{equi}, there exists an isogeny $\psi: A''\to A'$ such that $\psi^*f'_*\mathcal{O}_{X'}(D')\cong \psi^*\varphi^*f_*\mathcal{O}_X(D)$ is globally generated as in the following diagram. 
	\begin{center}
	\begin{tikzcd}
			X''\arrow[r,"\psi'"]\arrow[d, "f''"] & X' \arrow[r, "\varphi'"] \arrow[d, "f'"] & X \arrow[d, "f"] \\
			 A''\arrow[r, "\psi" ] &A' \arrow[r, "\varphi" ] & A 
	\end{tikzcd}
	\end{center}
By Proposition \ref{equi} again, $f_*\mathcal{O}_X(D)$ has the Chen-Jiang decomposition.

Next, we prove the only if part. By Proposition \ref{equi}, there exists an isogeny $g\colon B\to A$ such that $g^*f_*\mathcal{O}_X(D)$ is globally generated. Consider the following diagram for the fiber product $B':=B\times_{A}A'$. 
	\begin{center}
	\begin{tikzcd}
			B' \arrow[r, "p"] \arrow[d, "q"] & A' \arrow[d, "\varphi"] \\
			 B \arrow[r, "g" ] & A 
	\end{tikzcd}
	\end{center}
The morphisms $p$ and $q$ are the projections and isogenies between abelian varieties since $g$ and $\varphi$ are isogenies. Since 
$$p^*f'_*\mathcal{O}_{X'}(D')\cong p^*\varphi^*f_*\mathcal{O}_X(D)\cong q^*g^*f_*\mathcal{O}_X(D)$$ 
is globally generated, $f'_*\mathcal{O}_{X'}(D')$ has the Chen-Jiang decomposition by Proposition \ref{equi}.
\end{proof}

The following proposition claims that we only need to find a positive integer $N$ such that $f_*\mathcal{O}_X(ND)$ has the Chen-Jiang decomposition in order to prove that $f_*\mathcal{O}_X(D)$ has the same property.

\begin{prop}\label{N}
Let $f$ be a morphism from a klt pair $(X, \Delta)$ to an abelian variety $A$, $m\geq1$ a rational number and $D$ a Cartier divisor on $X$ such that $D\sim_{\Q}m(K_X+\Delta)$. If there exists a positive integer $N$ such that $f_*\mathcal{O}_X(ND)$ has the Chen-Jiang decomposition, then $f_*\mathcal{O}_X(D)$ has the Chen-Jiang decomposition.
\end{prop}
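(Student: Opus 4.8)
The plan is to reduce to the already-proved equivalences of Proposition \ref{equi}, exploiting the flexibility given by Corollary \ref{red} to pass freely between $f$ and its pullbacks along isogenies. First I would fix a positive integer $N$ with the stated property and consider the continuous evaluation morphism $e_{f_*\mathcal{O}_X(ND)}$; by Proposition \ref{equi} (i)$\Rightarrow$(iv) it is surjective, so by Lemma \ref{tor} there is an isogeny $\varphi\colon A'\to A$ with $\varphi^*f_*\mathcal{O}_X(ND)$ globally generated. Form the base-change square with $f'\colon X'\to A'$, set $\Delta'$ by $K_{X'}+\Delta'=\varphi'^*(K_X+\Delta)$ and $D'=\varphi'^*D$, so $(X',\Delta')$ is klt, $D'\sim_{\Q}m(K_{X'}+\Delta')$, and $f'_*\mathcal{O}_{X'}(ND')\cong\varphi^*f_*\mathcal{O}_X(ND)$ is globally generated. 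By Corollary \ref{red}, it suffices to prove that $f'_*\mathcal{O}_{X'}(D')$ has the Chen-Jiang decomposition. In other words, after renaming we may assume from the start that $f_*\mathcal{O}_X(ND)$ is \emph{itself} globally generated on $A$.

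Now I want to run the argument of the implication (ii)$\Rightarrow$(iii) in Proposition \ref{equi}, but starting from global generation of $f_*\mathcal{O}_X(ND)$ rather than of $f_*\mathcal{O}_X(D)$, and to arrive at a statement about $\mathcal{O}_X(D)$ (not $\mathcal{O}_X(ND)$). Consider the adjoint morphism $f^*f_*\mathcal{O}_X(ND)\to\mathcal{O}_X(ND)$; its image is $\mathcal{O}_X(ND)\otimes\mathcal{I}$ for the relative base ideal $\mathcal{I}$ of $ND$. Take a common log resolution $\mu\colon Y\to X$ of $\mathcal{I}$, of $(X,\Delta)$, and of the relative base locus of $D$ (so that both $\mu^*D$ and $\mu^*(ND)$ have the base part of their relative linear systems realized by honest divisors with simple normal crossings support). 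Writing $K_Y+\Delta_Y\sim_{\Q}\mu^*(K_X+\Delta)+E$ with $\Delta_Y,E$ effective without common components and $E$ exceptional, and passing to $\Delta_Y':=\Delta_Y+\frac{\lceil mE\rceil}{m}-E$, one gets a klt log smooth pair with $m(K_Y+\Delta_Y')\sim_{\Q}\mu^*(mD)+\lceil mE\rceil$, hence $N(K_Y+\Delta_Y')\sim_{\Q}\mu^* (ND)+\frac{N}{m}\lceil mE\rceil$ (choosing $N$ also divisible by the denominators so this is integral), and $\mu$-pushing-forward is harmless since $\lceil mE\rceil$ is exceptional, so the relevant pushforward on $Y$ is again globally generated. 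Subtracting from $G:=\mu^*(ND)+\frac{N}{m}\lceil mE\rceil$ the fixed divisor $G'$ coming from the two adjoint morphisms (for $ND$ and separately for $D$) as in the proof of Proposition \ref{equi}, one still has $g_*\mathcal{O}_Y(G-G'')\cong g_*\mathcal{O}_Y(G)$ for every effective $G''\le G'$, by the same factorization-of-the-identity argument.

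The key new input is the choice of the auxiliary divisor: I would pick, by Bertini applied to the (globally generated, hence base-point-free) moving part, a general reduced effective $H\sim (G-G')$ meeting $\Delta_Y'+G'$ transversally, and then set
$$T:=\Bigl\lfloor \Delta_Y'+\tfrac{N-1}{N}G'\Bigr\rfloor,\qquad N_0:=\tfrac{N-1}{N}H+\Delta_Y'+\tfrac{N-1}{N}G'-\Bigl\lfloor \Delta_Y'+\tfrac{N-1}{N}G'\Bigr\rfloor,$$
so that $T\le G'$, $(Y,N_0)$ is klt and log smooth, and $\tfrac{1}{N}G-T\sim_{\Q}K_Y+N_0$; here is where the divisibility of $N$ and the earlier relation $mD\sim_{\Q}\mu^*$-stuff are used to rewrite $G=\mu^*(ND)+(\text{exc})$ and extract a single copy of $D$, giving $\mu^*D - T' \sim_{\Q} K_Y+N_0$ for the corresponding $T'\le G'$. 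Since $T'\le G'$, we get $g_*\mathcal{O}_Y(\mu^*D-T')\cong g_*\mathcal{O}_Y(\mu^*D)\cong f_*\mathcal{O}_X(D)$ (up to exceptional twists), and Lemma \ref{m=1} applied to $\mu^*D-T'\sim_{\Q}K_Y+N_0$ exhibits $g_*\mathcal{O}_Y(\mu^*D-T')$, hence $f_*\mathcal{O}_X(D)$, as a direct summand of $(g\circ h')_*\mathcal{O}_Z(K_Z)$ for a generically finite $h'$; composing morphisms and invoking Proposition \ref{equi} (iii)$\Rightarrow$(i) finishes the proof. The main obstacle I anticipate is bookkeeping: one must simultaneously control the adjoint morphisms for $D$ and for $ND$ on a single resolution so that the fixed divisor $G'$ dominates both fixed parts, and one must verify carefully that after dividing $G$ by the integer implicit in $ND$ the leftover exceptional and fixed contributions remain $\le G'$ so that the isomorphism $g_*\mathcal{O}_Y(\mathord-\,T')\cong g_*\mathcal{O}_Y(\mathord-)$ still applies; the algebraic manipulation rewriting $m(K_Y+\Delta_Y')$ in terms of $D$ versus $ND$ has to be done with $N$ chosen divisible enough that every ceiling and floor in sight is integral.
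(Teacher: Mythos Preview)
Your overall strategy is the same as the paper's: reduce via Proposition~\ref{equi} and Corollary~\ref{red} to the case where $f_*\mathcal{O}_X(ND)$ is globally generated, pass to a common log resolution of $(X,\Delta)$ and of the relative base ideals of $D$ and $ND$, subtract an effective divisor $T$ from $G:=\mu^*D+\lceil mE\rceil$ so that $G-T\sim_{\Q}K_Y+(\text{klt boundary})$ while preserving the pushforward, and then invoke Lemma~\ref{m=1}. However, two concrete points in your write-up do not go through as written.

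First, the coefficient in your definition of $T$ is wrong: you use $\tfrac{N-1}{N}$, but the correct coefficient is $\tfrac{m-1}{Nm}$. The computation is
\[
G-T\sim_{\Q}m(K_Y+\Delta_Y')-T=K_Y+\Delta_Y'+\tfrac{m-1}{m}G-T\sim_{\Q}K_Y+\Delta_Y'+\tfrac{m-1}{Nm}(H+G'_N)-T,
\]
where $H\sim NG-G'_N$ and $G'_N$ is the relative fixed part of $NG$. With your coefficient the linear equivalence simply fails (it would force $(2-N)m=1$). This is not cosmetic: the whole point is that $m$, not $N$, governs the passage from $D$ to $K_Y+\Delta_Y'$; the integer $N$ only enters because you have rewritten $G\sim_{\Q}\tfrac{1}{N}(H+G'_N)$ using the globally generated system $|NG|$. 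Relatedly, $N$ is \emph{given} by hypothesis and cannot be ``chosen divisible enough''; fortunately no divisibility is needed, since all the manipulations take place with $\Q$-divisors and $\sim_{\Q}$.

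Second, and more importantly, the heart of the argument is missing. Having set $T=\big\lfloor \Delta_Y'+\tfrac{m-1}{Nm}G'_N\big\rfloor$, you must show $T\le G'$, where $G'$ is the relative fixed part of $G$ (i.e.\ of $D$), so that the factorization-of-the-identity trick gives $g_*\mathcal{O}_Y(G-T)\cong g_*\mathcal{O}_Y(G)$. This requires the inequality $G'_N\le NG'$, which is exactly what your sentence about ``the fixed divisor $G'$ dominating both fixed parts'' gestures at but does not prove. The paper obtains $G'_N\le NG'$ from the commutative square
\[
\begin{tikzcd}
(g^*g_*\mathcal{O}_Y(G))^{\otimes N}\arrow[r]\arrow[d] & g^*g_*\mathcal{O}_Y(NG)\arrow[d]\\
\mathcal{O}_Y(G)^{\otimes N}\arrow[r] & \mathcal{O}_Y(NG)
\end{tikzcd}
\]
which forces $\mathcal{O}_Y(N(G-G'))\subseteq\mathcal{O}_Y(NG-G'_N)$ as subsheaves of $\mathcal{O}_Y(NG)$. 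Once you have $G'_N\le NG'$, the chain $T\le\big\lfloor\Delta_Y'+\tfrac{m-1}{m}G'\big\rfloor\le G'$ follows since $\Delta_Y'$ has all coefficients $<1$. This multiplication-map step is the one genuinely new ingredient beyond Proposition~\ref{equi}, and it should be made explicit.
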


\begin{proof}
If the sheaf $f_*\mathcal{O}_X(ND)$ is $0$, then $f_*\mathcal{O}_X(D)=0$ and the statement is trivial. Thus we can assume $f_*\mathcal{O}_X(ND)\neq0$. By Proposition \ref{equi} and Corollary \ref{red}, we can assume $f_*\mathcal{O}_X(ND)$ is globally generated. 

We use a similar method as in the proof of Proposition \ref{equi}. We consider the following two adjoint morphisms 
$$f^*f_*\mathcal{O}_{X}(D)\to \mathcal{O}_{X}(D)\quad \text{and} \quad f^*f_*\mathcal{O}_{X}(ND)\to \mathcal{O}_{X}(ND).$$
We can take a log resolution $\mu\colon Y\to X$ of $(X, \Delta)$ and the relative base ideals of $D$ and $ND$.
	\begin{center}
	\begin{tikzcd}
			Y \arrow[r, "\mu"] \arrow[dr, "g" swap] & X \arrow[d, "f"] \\
			& A 
	\end{tikzcd}
	\end{center}
Then we have
	$$K_Y+\Delta_Y\sim_\mathbb{Q}\mu^*(K_{X}+\Delta)+E,$$
where the $\Q$-divisors $\Delta_Y$ and $E$ are effective and have no common components and $E$ is $\mu$-exceptional. We also have that the images of the new adjoint morphisms 
$$g^*g_*\mathcal{O}_{Y}(D_Y)\to \mathcal{O}_{Y}(D_Y)\quad \text{and} \quad g^*g_*\mathcal{O}_{Y}(ND_Y)\to \mathcal{O}_{Y}(ND_Y)$$
are line bundles $\mathcal{O}_{Y}(D_Y-F)$ and $\mathcal{O}_{Y}(ND_Y-F_N)$ where $D_Y=\mu^*D$ is a Cartier divisor, $F$ and $F_N$ are effective divisors and $\Delta_Y+E+F+F_N$ has simple normal crossings support. Since $(X, \Delta)$ is klt, we know $(Y, \Delta_Y':=\Delta_Y+\frac{\lceil mE\rceil}{m}-E)$ is also klt and  
$$m(K_Y+\Delta_Y')\sim_\mathbb{Q}\mu^*(m(K_{X}+\Delta))+\lceil mE\rceil\sim_{\Q} D_Y+\lceil mE\rceil.$$
Denote $D_Y+\lceil mE\rceil$ by $G$. We have $g_*\mathcal{O}_{Y}(G)\cong f_*\mathcal{O}_{X}(D)$ and $g_*\mathcal{O}_{Y}(NG)\cong f_*\mathcal{O}_{X}(ND)$ since $\lceil mE\rceil$ is $\mu$-exceptional and effective. The images of the two adjoint morphisms
$$g^*g_*\mathcal{O}_{Y}(G)\to \mathcal{O}_{Y}(G)\quad \text{and} \quad g^*g_*\mathcal{O}_{Y}(NG)\to \mathcal{O}_{Y}(NG)$$
are $\mathcal{O}_{Y}(G-F-\lceil mE\rceil)$ and $\mathcal{O}_{Y}(NG-F_N-N\lceil mE\rceil)$. Let $G'$ be the divisor $F+\lceil mE\rceil$ and $G'_N$ the divisor $F_N+N\lceil mE\rceil$. We claim that $G'_N\leq NG'$. We have the natural morphism 
$$(g_*\mathcal{O}_{Y}(G))^{\otimes N}\to g_*\mathcal{O}_{Y}(NG)$$
and the following commutative diagram.
	\begin{center}
	\begin{tikzcd}
			(g^*g_*\mathcal{O}_{Y}(G))^{\otimes N}\arrow[r] \arrow[d] & g^*g_*\mathcal{O}_{Y}(NG) \arrow[d] \\
			 (\mathcal{O}_{Y}(G))^{\otimes N} \arrow[r] & \mathcal{O}_{Y}(NG) 
	\end{tikzcd}
	\end{center}
We deduce from the diagram that $\mathcal{O}_{Y}(N(G-G'))$ is contained in $\mathcal{O}_{Y}(NG-G'_N)$ and thus $\mathcal{O}_{Y}(-NG')$ is contained in $\mathcal{O}_{Y}(-G'_N)$ as subsheaves of $\mathcal{O}_{Y}$ which proves the claim.

We know $g_*\mathcal{O}_{Y}(NG)$ is globally generated and thus $\mathcal{O}_{Y}(NG-G'_N)$ is globally generated. By Bertini's theorem, we can pick a general Cartier divisor $H\sim NG-G'_N$ which is reduced and effective such that $H$ and $\Delta_Y'+G'+G'_N$ have no common components and $H+\Delta_Y'+G'+G'_N$ has simple normal crossings support. We define an effective divisor $T$ as
$$T:=\bigg\lfloor \Delta_Y'+\frac{m-1}{Nm}G'_N \bigg\rfloor.$$
We have
$$T\leq\bigg\lfloor \Delta_Y'+\frac{m-1}{m}G' \bigg\rfloor\leq G'$$
since $G'_N\leq NG'$ and the coefficient of each irreducible component of $\Delta_Y'$ is smaller than $1$. Since $0\leq T\leq G'$, we know $g_*\mathcal{O}_{Y}(G)\cong g_*\mathcal{O}_{Y}(G-T)$ by the same argument of Proposition \ref{equi}. We have
$$G-T\sim_{\Q}m(K_Y+\Delta_Y')-T=K_Y+\Delta_Y'+(m-1)(K_Y+\Delta_Y')-T$$
$$\sim_{\Q}K_Y+\Delta_Y'+\frac{m-1}{m}G-T\sim_{\Q}K_Y+\Delta_Y'+\frac{m-1}{Nm}(G'_N+H)-T$$
$$=K_Y+\frac{m-1}{Nm}H+\Delta_Y'+\frac{m-1}{Nm}G'_N-\bigg\lfloor \Delta_Y'+\frac{m-1}{Nm}G'_N \bigg\rfloor.$$
We define $N$ as
$$N:=\frac{m-1}{Nm}H+\Delta_Y'+\frac{m-1}{Nm}G'_N-\bigg\lfloor \Delta_Y'+\frac{m-1}{Nm}G'_N \bigg\rfloor.$$
The effective $\Q$-divisor $N$ has simple normal crossings support and the coefficient of each irreducible component of itself is smaller than $1$. Thus $(Y, N)$ is a klt pair. Since $G-T\sim_{\Q}K_Y+N$, the sheaf $f_*\mathcal{O}_{X}(D)\cong g_*\mathcal{O}_{Y}(G)\cong g_*\mathcal{O}_{Y}(G-T)$ has the Chen-Jiang decomposition by Lemma \ref{m=1}.
\end{proof}

We will prove our main theorems by induction on $\dim A$. Our next lemma treats the central part of the induction. To prove it, we follow the strategy used in \cite[Sections 8 and 9]{LPS20} together with some additional techniques.

\begin{prop}\label{SNC}
Assume that Theorem \ref{main3} is true when the base abelian variety is of dimension up to $n-1$. Let $f$ be a morphism from a log smooth klt pair $(X, \Delta)$ to an abelian variety $A$ of dimension $n$. Then $f_*\mathcal{O}_X(N(K_X+\Delta))$ has the Chen-Jiang decomposition for every positive integer $N$ which is sufficiently big and divisible.
\end{prop}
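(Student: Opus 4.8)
The plan is to adapt the strategy of \cite[Sections 8 and 9]{LPS20} and combine it with Lemma \ref{split} and the inductive hypothesis. First I would fix $N$ sufficiently big and divisible so that $N\Delta$ is integral, $N(K_X+\Delta)$ is an honest Cartier divisor, and Lemma \ref{split} applies to $f_*\mathcal{O}_X(N(K_X+\Delta))$, giving it a singular Hermitian metric with semipositive curvature and the minimal extension property; we may assume $\mathcal{F}:=f_*\mathcal{O}_X(N(K_X+\Delta))\neq 0$. The first structural step is to produce the abelian subvarieties along which to decompose. As in \cite{LPS20}, one considers the cohomological support locus $V^0(A,\mathcal{F})$; by the generic vanishing package (Theorem \ref{main2} for smooth varieties via \cite[Theorem A]{PPS17}, or directly the results quoted in the introduction) each component of $V^0$ is a torsion translate $\alpha + \widehat{B}$ of a subtorus, where $B = (A/\ker)$ corresponds to a quotient $p\colon A\to B$. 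The strategy is then to peel off, for each such maximal component, a summand of the form $\alpha\otimes p^*\mathcal{G}$, and to show that what survives after removing all of them is M-regular (or zero), which is exactly the Chen-Jiang decomposition.

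The mechanism for peeling off a summand is the splitting statement: for a quotient $p\colon A\to B$ with kernel $K$ (so $K$ is an abelian variety of dimension $< n$ when the component is proper, or $p = \mathrm{id}$ in the M-regular-free extreme), one restricts $f$ over $p$, i.e.\ one looks at the induced family over $B$ whose fibers are morphisms to $K$; applying the inductive hypothesis (Theorem \ref{main3} in dimension $\le n-1$) to these fiberwise pushforwards — here is where one needs to massage $N(K_X+\Delta)$ into the relative form $N(K_{X/?}+\Delta) + (\text{pullback})$ and invoke a relative Chen-Jiang decomposition — one obtains, after base change by a suitable isogeny, that the relevant direct image splits off a piece pulled back from $B$. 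Then Lemma \ref{split} (more precisely the minimal extension property, which is stable under the operations \cite[Theorem 21.1, Corollary 21.2, Theorem 26.4]{HPS18}) upgrades the fiberwise splitting to a genuine direct sum decomposition $\mathcal{F}\cong (\alpha\otimes p^*\mathcal{G})\oplus \mathcal{F}'$ over $A$, with $\mathcal{G}$ again of the same type (a pushforward of a twisted pluricanonical-type sheaf over an abelian variety of smaller dimension), hence having the Chen-Jiang decomposition by induction; by Proposition \ref{sum} it then suffices to treat $\mathcal{F}'$. One runs this finitely many times — the finiteness coming from Noetherianity / the fact that each step strictly decreases the rank or the dimension of $V^0$ — until the remainder is M-regular or zero.

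A point requiring care is the invariance-of-plurigenera issue flagged in the introduction: in \cite{LPS20} the argument that the fiberwise $h^0$ is constant (so that Proposition \ref{IT} can be applied to conclude the complementary sheaf satisfies $\IT_0$, hence is M-regular) uses deformation invariance of plurigenera, which is unavailable for klt pairs with $m>1$. The workaround is to avoid asserting constancy directly: instead one argues on the level of $V^0$ itself — once all torsion components of $V^0(A,\mathcal{F}')$ have been removed, $V^0(A,\mathcal{F}')$ is at most a finite set of torsion points not contained in any positive-dimensional torsion translate, and combined with the GV property of $\mathcal{F}'$ (inherited because $\mathcal{F}$ is a pushforward of an adjoint-type bundle, \cite{Hac04, PP11a, PS14}) this forces $\codim \Supp R^i\widehat{\mathcal{S}}\mathcal{F}' > i$, i.e.\ M-regularity; alternatively one restricts to a general subvariety and uses Lemma \ref{split} for the restricted family, which is the route \cite[Section 9]{LPS20} takes. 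I expect the main obstacle to be exactly this step — organizing the induction so that the relative version of Theorem \ref{main3} is available in the precise form needed (including the control on the twist $\alpha$ being killed by the isogeny), and establishing M-regularity of the final remainder without invariance of plurigenera; the metric bookkeeping from \cite{HPS18} needed to globalize the fiberwise splitting is the other technically delicate piece, though it is essentially formal once the statements of \cite[Theorems 21.1, 26.4]{HPS18} are in hand.
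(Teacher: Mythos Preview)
Your outline diverges from the paper's argument in a way that contains a real gap. The paper does \emph{not} peel off summands $\alpha\otimes p^*\mathcal{G}$ one at a time. It sets $\mathcal{G}:=\Image e_{\mathcal{F}}$ and proves $\mathcal{H}:=\mathcal{F}/\mathcal{G}=0$, which by Proposition~\ref{equi} is equivalent to the Chen--Jiang decomposition. The key step, absent from your sketch, is to identify $\mathcal{G}$ with $f_*\mathcal{O}_X(D-T)$ where $D-T\sim_{\mathbb Q}K_X+\Delta'$ for a log smooth klt pair $(X,\Delta')$, via the trick of Proposition~\ref{equi}/\ref{N}. This puts $\mathcal{G}$ in the $m=1$ regime of Lemma~\ref{m=1}: it becomes a direct summand of a canonical pushforward, hence GV with torsion-structured loci, and---crucially---$R^1p_*\mathcal{G}$ inherits Koll\'ar torsion-freeness.

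The specific gap is your claim that Lemma~\ref{split} ``upgrades the fiberwise splitting to a genuine direct sum decomposition $\mathcal{F}\cong(\alpha\otimes p^*\mathcal{G})\oplus\mathcal{F}'$.'' The minimal extension property only splits surjections $\mathcal{F}\to\mathcal{O}_A$; it does not globalize a fiberwise Chen--Jiang decomposition into a direct summand of that shape. In the paper the metric input is invoked once, only in the degenerate case $V^1(A,\mathcal{G})=\{\mathcal{O}_A\}$ (as in \cite[Proposition 8.3]{LPS20}). In the main case---a positive-dimensional component $C\subseteq V^1(A,\mathcal{G})$ giving $p\colon A\to B$---the inductive hypothesis is applied both to $p_*\mathcal{F}$ on $B$ and to the \emph{fiberwise} sheaves $\mathcal{F}_b$ on $F=\ker p$, yielding surjectivity of $e_{\mathcal{F}_b}$. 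Together with base-change at the countably many torsion $\gamma\in\Pic^0(F)$ and an uncountability argument, this forces $\Supp\mathcal{H}$ to avoid a nonempty open $p^{-1}(U_0)$, so $p_*\mathcal{H}$ is torsion; since $p_*\mathcal{G}=p_*\mathcal{F}$ one has $p_*\mathcal{H}\hookrightarrow R^1p_*\mathcal{G}$, which is torsion-free by the identification of $\mathcal{G}$ above, whence $p_*\mathcal{H}=0$ and $V^0(A,\mathcal{H})\cap C=\emptyset$, a contradiction. This uncountability-plus-torsion-freeness maneuver \emph{is} the replacement for invariance of plurigenera; your proposed alternative (``argue on the level of $V^0$ directly'') does not supply it, and it cannot run without the $\mathcal{G}=f_*\mathcal{O}_X(D-T)$ identification.
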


\begin{proof}
We choose $N$ sufficiently divisible such that $N(K_X+\Delta)$ is Cartier. We can assume that $f_*\mathcal{O}_X(N(K_X+\Delta))\neq0$ for every positive integer $N$ which is sufficiently big and divisible. We choose $N$ sufficiently big and divisible such that Lemma \ref{split} works for $f_*\mathcal{O}_X(N(K_X+\Delta))$. 

We denote $f_*\mathcal{O}_X(N(K_X+\Delta))$ by $\mathcal{F}$ and $N(K_X+\Delta)$ by $D$. The sheaf $\mathcal{F}$ is a GV-sheaf by \cite[Variant 5.5]{PS14}. We consider the continuous evaluation morphism associated to $\mathcal{F}$
$$e_{\mathcal{F}}\colon \bigoplus_{\alpha\in \Pic^0(A)\atop \mathrm{torsion}}H^0(A, \mathcal{F}\otimes\alpha)\otimes\alpha^{-1}\to \mathcal{F}$$
and the sheaf $\mathcal{G}:=\Image e_{\mathcal{F}}\subseteq \mathcal{F}$. Lemma \ref{split} will work for the new pair for the same $N$ if we take an isogeny on $A$ and do the base change by the discussion after it. Thus we can assume that $\mathcal{G}$ is globally generated by Lemma \ref{tor} and Corollary \ref{red}. By Proposition \ref{equi} we only need to prove that $\mathcal{G}=\mathcal{F}$ to deduce our lemma. We consider the adjoint morphism
$$f^*\mathcal{G}\to\mathcal{O}_X(N(K_X+\Delta)).$$
By taking a log resolution, we can assume the image of the adjoint morphism is of the form $\mathcal{O}_X(N(K_X+\Delta)-E)$ where $E$ is an effective divisor and that $E+\Delta$ has simple normal crossings support. Since $\mathcal{G}$ is globally generated, the line bundle $\mathcal{O}_X(N(K_X+\Delta)-E)$ is globally generated and thus there exists an effective and reduced divisor $H\sim N(K_X+\Delta)-E$ such that $H$ and $E+\Delta$ have no common components and $H+E+\Delta$ has simple normal crossings support. By the same argument of Proposition \ref{equi}, there exists an effective divisor $T\leq E$ such that $D-T\sim_{\Q}K_X+\Delta'$ where $(X, \Delta')$ is a log smooth klt pair. We claim that the inclusion
$$\mathcal{G}\hookrightarrow f_*\mathcal{O}_X(D-T)$$
is an identity. Since $D-T\sim_{\Q}K_X+\Delta'$, the continuous evaluation morphism associated to $f_*\mathcal{O}_X(D-T)$ is surjective by Lemma \ref{m=1} and Proposition \ref{equi}. We know that
$$H^0(A, \mathcal{G}\otimes\alpha)\subseteq H^0(A, f_*\mathcal{O}_X(D-T)\otimes\alpha)\subseteq H^0(A, f_*\mathcal{O}_X(D)\otimes\alpha)$$
for every torsion line bundle $\alpha\in\Pic^0(A)$ and the two section spaces on the left and right are equal from the definition of $\mathcal{G}$. Thus we deduce that 
$$\mathcal{G}=f_*\mathcal{O}_X(D-T)$$
since the continuous evaluation morphisms associated to $\mathcal{G}$ and $f_*\mathcal{O}_X(D-T)$ are surjective. By Lemma \ref{m=1}, there exists a generically finite surjective morphism $h\colon W\to X$ from a smooth projective variety $W$ such that $\mathcal{G}=f_*\mathcal{O}_X(D-T)$ is a direct summand of $(f\circ h)_*\mathcal{O}_W(K_W)$. Thus $\mathcal{G}$ is a GV-sheaf by \cite{GL87} and \cite{Hac04} and the cohomological support loci $V_l^i(A, \mathcal{G})$ are finite unions of torsion subvarieties of $\Pic^0(A)$ for every $i$ and $l$ by \cite{GL91}, \cite{Sim93} and \cite[Lemma 10.3]{HPS18}. Thus the sheaf $\mathcal{H}:=\mathcal{F}/\mathcal{G}$ is also a GV-sheaf. We only need to prove $\mathcal{H}=0$ or $V^0(A, \mathcal{H})$ is empty to deduce our lemma. By \cite[Theorem 1.3]{Shi16}, the cohomological support loci $V_l^0(A, \mathcal{F})$ are finite unions of torsion subvarieties of $\Pic^0(A)$ for every $l$. We deduce that
$$H^0(A, \mathcal{G}\otimes\alpha)=H^0(A, \mathcal{F}\otimes\alpha)$$
for every $\alpha\in \Pic^0(A)$ since it is true when $\alpha$ is a torsion point and torsion points are dense inside $V_l^0(A, \mathcal{G})$ and $V_l^0(A, \mathcal{F})$ by the structures of these sets. By the exact sequence
$$0\to H^0(A, \mathcal{G}\otimes\alpha)\to H^0(A, \mathcal{F}\otimes\alpha)\to H^0(A, \mathcal{H}\otimes\alpha)\to H^1(A, \mathcal{G}\otimes\alpha),$$
we deduce that $V^0(A, \mathcal{H})\subseteq V^1(A, \mathcal{G})$. Thus $V^0(A, \mathcal{H})$ is contained in a finite union of torsion subvarieties of codimension $\geq1$. We can choose an isogeny and do the base change using this isogeny freely by Corollary \ref{red}. Thus we can assume that all the irreducible components of $V^1(A, \mathcal{G})$ are abelian varieties. If $V^1(A, \mathcal{G})$ is empty, then $\mathcal{H}=0$. Assume that $V^1(A, \mathcal{G})$ is not empty. If $V^1(A, \mathcal{G})$ does not contain any positive-dimensional abelian subvariety, then $V^1(A, \mathcal{G})=\{\mathcal{O}_A\}$. Then $\mathcal{H}=0$ follows from Lemma \ref{split} and the same proof of \cite[Proposition 8.3]{LPS20}.

We now assume that $V^1(A, \mathcal{G})$ contains a positive-dimensional abelian subvariety. If $V^0(A, \mathcal{H})$ does not intersect any of these positive-dimensional abelian subvarieties, then $V^0(A, \mathcal{H})$ is empty and thus $\mathcal{H}=0$. We assume that $V^0(A, \mathcal{H})$ intersects at least one of them which is denoted by $C\subseteq V^1(A, \mathcal{G})$. Then the abelian subvariety $C$ is the image of the pullback morphism $p^*\colon\Pic^0(B)\to\Pic^0(A)$ where $p\colon A\to B$ is a fibration to an abelian variety $B$. Since $\mathcal{G}$ is a GV-sheaf, we have $1\leq\dim B\leq n-1$. After a base change by an isogeny on $B$, we can assume $A=B\times F$ where $B$ and $F$ are abelian varieties, $p$ is the first projection and $q$ is the second projection. Denote $p\circ f$ by $g$.	
	\begin{center}
	\begin{tikzcd}
			X \arrow[r, "f"] \arrow[rr, bend right, "g"] & B\times F \arrow[r, "p"] & B
	\end{tikzcd}
	\end{center}
We know the continuous evaluation morphism $e_{p_*\mathcal{G}}$ associated to $p_*\mathcal{G}$ is surjective since $p_*\mathcal{G}$ is a direct summand of $(g\circ h)_*\mathcal{O}_W(K_W)$. Since we assume that Theorem \ref{main3} is true when the base abelian variety is of dimension up to $n-1$, the continuous evaluation morphism $e_{p_*\mathcal{F}}$ associated to $p_*\mathcal{F}$ is surjective by Proposition \ref{equi} and thus $p_*\mathcal{G}=p_*\mathcal{F}$ since we have
$$H^0(B, p_*\mathcal{G}\otimes\beta)=H^0(B, p_*\mathcal{F}\otimes\beta)$$
for every $\beta\in\Pic^0(B)$. By a similar argument, we can prove that 
$$p_*(\mathcal{G}\otimes\alpha)=p_*(\mathcal{F}\otimes\alpha)$$
for every torsion point $\alpha\in\Pic^0(A)$.

We now consider $Z=g(X)$ which is a reduced and irreducible subvariety of $B$ and $\mathcal{G}$, $\mathcal{F}$ and $\mathcal{H}$ as coherent sheaves on $f(X)\subseteq Z\times F$. For any $b\in Z$, let $X_b=g^{-1}(b)$ and $f_b$ the induced morphism as in the following base change diagram.
	\begin{center}
	\begin{tikzcd}
			X_b \arrow[r, hook, "h_b"] \arrow[d, "f_b"] & X \arrow[d, "f"] \arrow[dd, bend left=60, "g"] \\
			 F \arrow[r, hook, "k_b"] \arrow[d, ""] & Z\times F \arrow[d, "p"] \\
			 \{b\} \arrow[r, hook, "i_b" ] & Z
	\end{tikzcd}
	\end{center}
The morphism $k_b$ is $(i_b, \id_F)$. By the generic smoothness theorem, there exists a nonempty open subset $U'\subseteq Z$ such that $g$ is smooth over $U'$ and $(X_b, \Delta_b)$ is a log smooth klt pair for every $b\in U'$ where $\Delta_b=\Delta|_{X_b}$. Denote $(f_{b})_*\mathcal{O}_{X_b}(N(K_{X_b}+\Delta_b))$ by $\mathcal{F}_b$. By \cite[Proposition 4.1]{LPS20}, we can shrink $U'$ such that the base change morphism
$$k_b^*\mathcal{F}\to\mathcal{F}_b$$
is an isomorphism for every $b\in U'$ and thus the base change morphism
$$k_b^*(\mathcal{F}\otimes q^*\gamma)\to\mathcal{F}_b\otimes\gamma$$ 
is an isomorphism for every $b\in U'$ and every torsion point $\gamma\in\Pic^0(F)$ where $q$ is the second projection onto $F$. Since we assume that Theorem \ref{main3} is true when the base abelian variety is of dimension up to $n-1$ and $1\leq\dim F\leq n-1$, the continuous evaluation morphism associated to $\mathcal{F}_b$
$$e_{\mathcal{F}_b}\colon \bigoplus_{\gamma\in \Pic^0(F)\atop \mathrm{torsion}}H^0(F, \mathcal{F}_b\otimes\gamma)\otimes\gamma^{-1}\to\mathcal{F}_b$$
is surjective for every $b\in U'$. For every torsion point $\gamma\in\Pic^0(F)$, there exists a nonempty open subset $U_{\gamma}\subseteq U'$ such that the base change morphism
$$i_b^*\Big(g_*\big(\mathcal{O}_X(N(K_X+\Delta))\otimes f^*q^*\gamma\big)\Big)\to H^0(X_b, \mathcal{O}_{X_b}(N(K_{X_b}+\Delta_b))\otimes f_b^*\gamma)$$
is surjective for every $b\in U_{\gamma}$ by the base change theorem. Thus we know 
$$i_b^*(p_*(\mathcal{F}\otimes q^*\gamma))\to H^0(F, \mathcal{F}_b\otimes\gamma)$$
is surjective for every $b\in U_{\gamma}$ by the projection formula. Consider the set
$$U:=\bigcap_{\gamma\in\Pic^0(F)\atop\mathrm{torsion}}U_{\gamma}$$
which is an intersection of countably many nonempty open subsets of $Z$ since $\Pic^0(F)$ only has countably many torsion points. The set $U$ is not empty since an irreducible variety over an uncountable and algebraically closed field cannot be a countable union of proper closed subsets. By the surjective morphisms above, we deduce that the morphism $e$ induced from the adjoint morphisms
$$e\colon\bigoplus_{\gamma\in \Pic^0(F)\atop \mathrm{torsion}}p^*p_*(\mathcal{F}\otimes q^*\gamma)\otimes q^*\gamma^{-1}\to\mathcal{F}$$
is surjective after restricted to the fiber $p^{-1}(b)$ for every $b\in U$ since $e_{\mathcal{F}_b}$ is surjective for every $b\in U$. Then $e$ is surjective at every point of the fiber $p^{-1}(b)$ for every $b\in U$ by Nakayama's lemma since $\mathcal{F}$ is coherent. Since
$$p_*(\mathcal{G}\otimes q^*\gamma)=p_*(\mathcal{F}\otimes q^*\gamma)$$
for every torsion point $\gamma\in\Pic^0(F)$, the morphism $e$ factors through the subsheaf $\mathcal{G}$ of $\mathcal{F}$. We deduce that $\mathcal{G}=\mathcal{F}$ on $p^{-1}(U)$ and $\Supp\mathcal{H}$ does not intersect the set $p^{-1}(U)$. Denote $Z\times F\setminus p^{-1}(U_{\gamma})$ by $V_{\gamma}$ which is a proper closed subset of $Z\times F$. Then we have
$$\Supp\mathcal{H}\subseteq \bigcup_{\gamma\in\Pic^0(F)\atop\mathrm{torsion}}V_{\gamma}.$$
Since $\mathcal{H}$ is coherent, $\Supp\mathcal{H}$ is closed and can be decomposed as a union of irreducible components
$$\Supp\mathcal{H}=\bigcup_{k\in K}Z_k$$
where $K$ is a finite index set. We deduce that
$$Z_k=\bigcup_{\gamma\in\Pic^0(F)\atop\mathrm{torsion}}(Z_k\cap V_{\gamma}).$$
Since an irreducible variety over an uncountable and algebraically closed field cannot be a countable union of proper closed subsets, we deduce that $Z_k\subseteq V_{\gamma_k}$ for some torsion point $\gamma_k\in\Pic^0(F)$. Thus $\Supp\mathcal{H}$ does not intersect the nonempty open set
$$p^{-1}(\bigcap_{k\in K}U_{\gamma_k})$$
and we deduce that $p_*\mathcal{H}$ is a torsion sheaf on $Z$. We have the exact sequence
$$0\to p_*\mathcal{G}\to p_*\mathcal{F}\to p_*\mathcal{H}\to R^1p_*\mathcal{G}.$$
We know $\mathcal{G}$ is a direct summand of $(f\circ h)_*\mathcal{O}_W(K_W)$. By the five-term exact sequence, we deduce that $R^1p_*((f\circ h)_*\mathcal{O}_W(K_W))$ is a subsheaf of $R^1(p\circ f\circ h)_*\mathcal{O}_W(K_W)$ which is a torsion-free sheaf by \cite[Theorem 2.1]{Kol86}. Since $p_*\mathcal{G}=p_*\mathcal{F}$, the torsion sheaf $p_*\mathcal{H}$ is a subsheaf of $R^1p_*\mathcal{G}$ which is a torsion-free sheaf. Thus we deduce that $p_*\mathcal{H}=0$ and
$$H^0(A, \mathcal{H}\otimes p^*\beta)\cong H^0(B, p_*\mathcal{H}\otimes\beta)=0$$
for every $\beta\in\Pic^0(B)$. Thus $V^0(A, \mathcal{H})$ does not intersect $C$ which is a contradiction and we finish our proof.
\end{proof}

\begin{rem}\label{genera}
The invariance of plurigenera for smooth families of smooth varieties is used in the proof of \cite[Lemma 9.2]{LPS20} to conclude that there exists a nonempty open subset $O\subseteq Z$ such that $\Supp\mathcal{H}$ does not intersect $O\times F$. Our argument in Proposition \ref{SNC} shows that we can prove the same result even in the case of klt pairs without appealing to the invariance of plurigenera.
\end{rem}

Next, we prove our main theorems.

\begin{proof}[Proof of Theorem \ref{main3}]
We prove the theorem by induction on $\dim A$. The existence of the Chen-Jiang decomposition follows from Propositions \ref{N} and \ref{SNC} by taking a log resolution. The statement that we can choose $\alpha_i$ which becomes trivial when pulled back by the isogeny $\varphi$ in Theorem \ref{main1} follows from the same proof of \cite[Theorem C]{LPS20}.
\end{proof}

\begin{proof}[Proof of Theorem \ref{main1}]
For an individual $l$, the existence of such an isogeny follows from Theorem \ref{main3} and Proposition \ref{equi}. Next we prove that we can find an isogeny which works for every $l$. By \cite[Theorem 1.2]{BCHM}, for $N\in\N$ sufficiently divisible, the $\mathcal{O}_A$-algebra
$$\mathfrak{R}(f, ND)=\bigoplus_{i\in\N}f_*\mathcal{O}_X(iND)$$
is finitely generated. Thus the $\mathcal{O}_A$-algebra $\mathfrak{R}(f, D)$ is finitely generated by a Theorem of E. Noether on the finiteness of integral closure. Thus we can find an isogeny which works for every $l$.
\end{proof}

\begin{proof}[Proof of Theorem \ref{main2}]
It follows from Theorem \ref{main3} and Proposition \ref{equi}.
\end{proof}

As mentioned in the introduction, for the sake of completeness we also give an alternative approach to the main theorems, which uses the minimal model program as in the following lemma. However, we need an extra assumption which is that the general fiber $(F, \Delta|_F)$ of $f$ has a good minimal model. The good minimal model conjecture is a famous conjecture in birational geometry which states that every klt pair $(X, \Delta)$ has a good minimal model if $K_X+\Delta$ is pseudoeffective. The conjecture is known to hold in dimensions up to $3$ by a lot of work and when the pair $(X, \Delta)$ is of log general type by \cite{BCHM}.

\begin{prop}\label{MMP}
Let $f$ be a morphism from a klt pair $(X, \Delta)$ to an abelian variety $A$, $m>0$ a rational number and $D$ a Cartier divisor on $X$ such that $D\sim_{\Q}m(K_X+\Delta)$. If the general fiber $(F, \Delta|_F)$ of $f$ has a good minimal model, then $f_*\mathcal{O}_X(ND)$ has the Chen-Jiang decomposition for every positive integer $N$ which is sufficiently big and divisible.
\end{prop}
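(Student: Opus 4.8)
The plan is to reduce to the situation handled by the generic vanishing / singular Hermitian metric machinery via a good minimal model of $f$ over $A$, and then invoke the strategy that proved Proposition \ref{SNC}. First I would run a relative MMP. Since $(F, \Delta|_F)$ has a good minimal model, by the standard argument (see e.g.\ \cite{BCHM} together with the theory of good minimal models over a base) the pair $(X, \Delta)$ has a good minimal model over $A$: after passing to a log resolution we may assume $(X,\Delta)$ is log smooth, and running a $(K_X+\Delta)$-MMP over $A$ produces a birational contraction $\xi\colon (X,\Delta)\dashrightarrow (X', \Delta')$ over $A$ to a $\Q$-factorial klt pair with $K_{X'}+\Delta'$ semiample over $A$. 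The key point is that since $\xi$ is a birational contraction and $D\sim_\Q m(K_X+\Delta)$ with $m>0$, for every $N$ sufficiently divisible one has $f_*\mathcal{O}_X(ND)\cong f'_*\mathcal{O}_{X'}(ND')$ where $D'$ is the Cartier divisor on $X'$ with $D'\sim_\Q m(K_{X'}+\Delta')$; this is because the MMP only contracts divisors along which the log discrepancy strictly increases, so the relevant reflexive pushforwards agree. Hence it suffices to prove the statement for $f'$, i.e.\ we may assume from the start that $K_X+\Delta$ is semiample over $A$.

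Next I would use semiampleness over $A$ to pass to the relative ample model. Let $\pi\colon X\to T$ be the morphism over $A$ defined by a sufficiently divisible multiple of $K_X+\Delta$, so $K_X+\Delta\sim_\Q \pi^*(K_T+\Delta_T)$ for some klt pair $(T,\Delta_T)$ on the normal variety $T$ with $K_T+\Delta_T$ ample over $A$, and let $g\colon T\to A$ be the induced morphism (apply the canonical bundle formula / Ambro--Kawamata--Fujino adjunction to get the klt pair downstairs). For $N$ sufficiently divisible, $\pi_*\mathcal{O}_X(ND) = \pi_*\mathcal{O}_X(Nm(K_X+\Delta)) \cong \mathcal{O}_T(Nm(K_T+\Delta_T))$ up to the chosen $\Q$-linear equivalence, and $R^i\pi_*$ of it vanishes for $i>0$ once $N$ is large (relative Serre vanishing, since $K_T+\Delta_T$ is ample over $A$ hence over $T$-trivial direction — more precisely $Nm(K_X+\Delta)$ is $\pi$-trivial so this is just $\pi_*$ of a $\pi$-trivial bundle twisted appropriately; one packages this with Kawamata--Viehweg relative vanishing for the klt pair $(X,\Delta)$). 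Therefore $f_*\mathcal{O}_X(ND)\cong g_*\mathcal{O}_T(ND_T)$ where $D_T$ is Cartier with $D_T\sim_\Q m(K_T+\Delta_T)$ and $K_T+\Delta_T$ is $g$-ample. Now $g_*\mathcal{O}_T(ND_T)$ for $N\gg 0$ can be analyzed as in the proof of Proposition \ref{SNC}: the $g$-ampleness gives that the higher $R^ig_*$ vanish, the sheaf is a GV-sheaf, and its continuous evaluation morphism is controlled by the singular Hermitian metric supplied by Lemma \ref{split} applied after a log resolution of $(T,\Delta_T)$, so the inductive argument on $\dim A$ from Proposition \ref{SNC} (using Theorem \ref{main3} in dimensions $\le n-1$) produces the Chen--Jiang decomposition.

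The main obstacle I expect is the first step: showing that the existence of a good minimal model for the general fiber $(F,\Delta|_F)$ yields a good minimal model for $(X,\Delta)$ \emph{over $A$}, and in particular that $K_X+\Delta$ becomes semiample over $A$ rather than merely that the MMP over $A$ terminates. This is a known circle of results (lifting good minimal models from the general fiber of a fibration, in the style of Hacon--Xu, Birkar, Hashizume and others), but it has to be quoted carefully; once it is in place, the passage to the relative ample model and the reduction of $f_*\mathcal{O}_X(ND)$ to $g_*\mathcal{O}_T(ND_T)$ is routine, and the final positivity statement follows by re-running Proposition \ref{SNC} verbatim with the additional input that $g$ is the structure map of a klt pair whose log canonical divisor is $g$-ample, which only makes the generic vanishing estimates easier. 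Note also that Remark \ref{not} presumably flags that this argument, like the main one, still relies on the analytic Lemma \ref{split}, so the phrase ``not purely algebraic'' in the introduction is consistent with this plan.
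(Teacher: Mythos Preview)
Your approach diverges from the paper's in a key way, and in doing so misses the idea that makes Proposition~\ref{MMP} a genuine \emph{alternative} to Proposition~\ref{SNC}.

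The paper's argument runs as follows. After Stein factorization $f=g\circ h$ with $h\colon X\to Z$ a fibration and $g\colon Z\to A$ finite, one invokes \cite{BCHM} and \cite{HX13} to produce a good minimal model $(Y,\Delta_Y)$ of $(X,\Delta)$ over $Z$, so $K_Y+\Delta_Y$ is semiample over $Z$ and hence over $A$. The crucial step you are missing is then \cite[Corollary~1.2]{Hu16}: because the base is an abelian variety, semiampleness over $A$ upgrades to \emph{global} semiampleness of $K_Y+\Delta_Y$. Once this is in hand, for $N$ sufficiently big and divisible one picks a general $H\in |MNm(K_Y+\Delta_Y)|$ with $M\gg 0$ and writes
\[
Nm(K_Y+\Delta_Y)\;\sim_{\Q}\;K_Y+\Delta_Y+\tfrac{Nm-1}{MNm}H,
\]
where the right-hand pair is klt. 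Lemma~\ref{m=1} then applies directly: no induction on $\dim A$, no relative ample model, no singular Hermitian metrics on the pushforward. In particular, the non-algebraic input flagged in Remark~\ref{not} is the extension theorem of \cite{DHP13} hidden inside \cite{Hu16} via \cite{BC15}, \emph{not} Lemma~\ref{split}; your reading of that remark is off.

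Your plan has two issues beyond this. First, passing to the relative ample model $T$ and invoking the canonical bundle formula does not directly yield a klt pair $(T,\Delta_T)$ with $K_T+\Delta_T$ $\Q$-linearly equivalent to the relevant divisor: the formula produces $K_T+B_T+M_T$ with a moduli part $M_T$ that is only b-nef, and absorbing it into an effective klt boundary is not automatic. Second, and more to the point, finishing by ``re-running Proposition~\ref{SNC} verbatim'' defeats the purpose of the proposition, which is offered precisely as a route to the Chen--Jiang decomposition of $f_*\mathcal{O}_X(ND)$ that bypasses the inductive argument of Proposition~\ref{SNC}. The step that makes this bypass possible is exactly the passage from relative to global semiampleness via \cite{Hu16}, which your sketch never reaches.
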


\begin{proof}
The Stein factorization of $f$ gives a decomposition of $f$ as $g\circ h$ where $h\colon X\to Z$ is a fibration to a normal projective variety $Z$ and $g\colon Z\to A$ is a finite morphism. By assumption, the general fiber $(F, \Delta|_F)$ of $h$ has a good minimal model and $(X, \Delta)$ is klt. Thus by \cite[Theorem 1.2]{BCHM} and \cite[Theorem 2.12]{HX13}, $(X, \Delta)$ has a good minimal model $(Y, \Delta_Y)$ over $Z$. Let $\xi\colon X\dasharrow Y$ be the birational contraction and $(p,q)\colon W\to X\times Y$ a smooth resolution of indeterminacies of $\xi$ as in the following diagram.
	\begin{center}
	\begin{tikzcd}
			& W \arrow[dl, "p" swap] \arrow[dr, "q"] & \\
			X \arrow[rr, dashed, "\xi" ] \arrow[dr, "h"]&& Y \arrow[dl, "h'" swap]\arrow[d, "f'"]\\&Z \arrow[r, "g"]&A
	\end{tikzcd}
	\end{center} 
The pair $(Y, \Delta_Y)$ is klt and we have 
$$	p^*(K_X+\Delta)\sim_\Q q^*(K_Y+\Delta_Y)+E,$$
where $E$ is an effective $q$-exceptional $\Q$-divisor. We can choose a positive integer $N$ sufficiently divisible such that $Nm(K_X+\Delta)$, $Nm(K_Y+\Delta_Y)$ and $NmE$ are Cartier divisors, $ND\sim Nm(K_X+\Delta)$ and
$$p^*(Nm(K_X+\Delta))\sim q^*(Nm(K_Y+\Delta_Y))+NmE.$$
Note that $m>0$ is a rational number. We can also make $N$ sufficiently big such that $Nm\geq 1$. We deduce that
$$f_*\mathcal{O}_X(ND)\cong f_*\mathcal{O}_X(Nm(K_X+\Delta))\cong f_*p_*\mathcal{O}_W\big(p^*(Nm(K_X+\Delta))\big)$$
$$\cong f'_*q_*\mathcal{O}_W\big(q^*(Nm(K_Y+\Delta_Y))+NmE\big)\cong f'_*\mathcal{O}_Y(Nm(K_Y+\Delta_Y))$$
since $NmE$ is an effective $q$-exceptional divisor. Since $g$ is a finite morphism and $K_Y+\Delta_Y$ is semiample over $Z$, we deduce that $K_Y+\Delta_Y$ is semiample over $A$. By \cite[Corollary 1.2]{Hu16}, $K_Y+\Delta_Y$ is semiample and thus $Nm(K_Y+\Delta_Y)$ is semiample. Thus we can find a positive integer $M$ which is sufficiently big such that  $MNm(K_Y+\Delta_Y)\sim H$ where $H$ is a reduced and effective divisor such that $(Y,\Delta_Y+\frac{1}{M}H)$ is still klt. We have that
$$Nm(K_Y+\Delta_Y)\sim_{\Q}K_Y+\Delta_Y+\frac{Nm-1}{MNm}H.$$ 
The pair $(Y, \Delta_Y+\frac{Nm-1}{MNm}H)$ is klt since $Nm\geq 1$. Thus $f'_*\mathcal{O}_Y(Nm(K_Y+\Delta_Y))$ has the Chen-Jiang decomposition by Lemma \ref{m=1}. Then $f_*\mathcal{O}_X(ND)$ has the Chen-Jiang decomposition.
\end{proof}

\begin{rem}\label{not}
The proof of Proposition \ref{MMP} is not purely algebraic since \cite[Corollary 1.2]{Hu16} builds on the result by \cite{BC15} which exploits the extension theorem from \cite{DHP13}. The proof of the extension theorem needs applications of the theory of singular Hermitian metrics.
\end{rem}

Next, we give a simple corollary of Theorems \ref{main1}, \ref{main2} and \ref{main3} regarding some special dlt pairs.

\begin{coro}
Let $f$ be a morphism from a $\Q$-factorial dlt pair $(X, \Delta)$ to an abelian variety $A$, $m\geq1$ a rational number, $H$ an ample $\Q$-Cartier $\Q$-divisor on $X$ and $D$ a Cartier divisor on $X$ such that $D\sim_{\Q}m(K_X+\Delta+H)$. Then the conclusions of Theorems \ref{main1}, \ref{main2} and \ref{main3} are true.
\end{coro}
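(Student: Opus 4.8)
The plan is to reduce the statement about the dlt pair $(X,\Delta)$ with an extra ample term to the klt case already established in Theorems \ref{main1}, \ref{main2} and \ref{main3}. The key observation is that $K_X+\Delta+H$ is of the right shape: since $H$ is ample, we can perturb it to absorb both the dlt boundary and the ampleness into a single klt boundary.

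First I would choose a positive integer $M$ sufficiently big and divisible so that $MmH$ is an integral ample divisor and, by Bertini applied to a sufficiently positive multiple, so that there is a reduced effective divisor $H'\sim MmH$ with $(X,\Delta+\frac{1}{M}H')$ having simple normal crossings with $\Delta$ away from the non-snc locus; more precisely, since $(X,\Delta)$ is $\Q$-factorial dlt, a general such $H'$ makes $(X,\Delta+\frac{1}{M}H')$ klt (the coefficients of $\Delta$ are $\leq 1$, the dlt condition gives snc near the components of coefficient $1$, and a general very ample $H'$ avoids the relevant strata, so all coefficients in the new boundary become $<1$ where needed and log discrepancies stay positive). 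Set $\Delta_0:=\Delta+\frac{1}{M}H'$; then $(X,\Delta_0)$ is klt. A small technical point: one may need to first pass to a log resolution to guarantee the klt property cleanly, but since Theorems \ref{main1}--\ref{main3} are already formulated for arbitrary klt pairs $(X,\Delta)$ over $A$ (not just log smooth ones) and the conclusions for $f_*\mathcal{O}_X(D)$ only depend on $X$, $f$, and the $\Q$-linear equivalence class of $D$, this is harmless.

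Next I would record the $\Q$-linear equivalence
$$K_X+\Delta_0 = K_X+\Delta+\tfrac{1}{M}H' \sim_\Q K_X+\Delta+H,$$
so that $D\sim_\Q m(K_X+\Delta+H)\sim_\Q m(K_X+\Delta_0)$. Thus $D$ is a Cartier divisor with $D\sim_\Q m(K_X+\Delta_0)$ for the klt pair $(X,\Delta_0)$ and the same morphism $f\colon X\to A$ and rational number $m\geq 1$. Applying Theorems \ref{main1}, \ref{main2} and \ref{main3} to the data $(X,\Delta_0)$, $f$, $m$, $D$ yields exactly the conclusions of those theorems for $f_*\mathcal{O}_X(D)$, which is what we wanted.

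The only real point requiring care is the construction of $\Delta_0$: one must verify that a general member $H'$ of a sufficiently positive linear system, scaled by $\frac{1}{M}$, turns the $\Q$-factorial dlt pair into a genuinely klt pair. I expect this to be the main (minor) obstacle, handled by combining the dlt hypothesis with Bertini — the dlt condition ensures snc along the coefficient-one part of $\Delta$ so that adding a general small ample perturbation drops those coefficients below $1$ while keeping all log discrepancies positive. Everything else is a direct citation of the main theorems.
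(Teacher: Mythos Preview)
Your reduction strategy is the right idea, but the construction of $\Delta_0$ fails at exactly the point you flag as ``the only real point requiring care.'' If $(X,\Delta)$ is dlt but not klt, some prime components $S_i$ of $\Delta$ appear with coefficient exactly $1$. When you set $\Delta_0=\Delta+\tfrac{1}{M}H'$ with $H'$ a \emph{general} member of a very ample linear system, $H'$ will share no component with $\Delta$; hence each $S_i$ still appears in $\Delta_0$ with coefficient $1$, and $(X,\Delta_0)$ is not klt --- it is at best dlt again. Your heuristic that a general $H'$ ``avoids the relevant strata, so all coefficients in the new boundary become $<1$'' conflates two different things: genericity controls how $H'$ meets the strata of $\Delta$, but it cannot lower the coefficients of the components already present in $\Delta$. (There is also a minor slip: you want $H'\sim_\Q MH$, not $MmH$, for the displayed $\Q$-linear equivalence to hold.)

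The paper's proof fixes precisely this. It first writes $S:=\lfloor\Delta\rfloor$ and subtracts a small multiple $\varepsilon S$ from the boundary, using the standard fact that a $\Q$-factorial dlt pair becomes klt once one perturbs the coefficient-one components down: $(X,\Delta-\varepsilon S)$ is klt. The lost $\varepsilon S$ is then moved into the ample part, $H':=H+\varepsilon S$, which remains ample for $0<\varepsilon\ll 1$. Only at this point does one choose a general effective $E\sim_\Q H'$ with $(X,\Delta-\varepsilon S+E)$ klt, and set $\Delta':=\Delta-\varepsilon S+E$. Then $D\sim_\Q m(K_X+\Delta+H)\sim_\Q m(K_X+\Delta')$ with $(X,\Delta')$ klt, and Theorems \ref{main1}--\ref{main3} apply. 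The essential extra step you are missing is the subtraction of $\varepsilon\lfloor\Delta\rfloor$ before absorbing the ample divisor.
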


\begin{proof}
Denote $\lfloor\Delta\rfloor$ by $S$. Since $H$ is ample, we can choose a rational number $\varepsilon>0$ which is sufficiently small such that $H':=H+\varepsilon S$ is an ample $\Q$-Cartier $\Q$-divisor. Since $(X, \Delta)$ is dlt, the pair $(X, \Delta-\varepsilon S)$ is klt and thus we can choose an effective $\Q$-Cartier $\Q$-divisor $E\sim_{\Q}H'$ such that $(X, \Delta':=\Delta-\varepsilon S+E)$ is klt. We have that
$$D\sim_{\Q}m(K_X+\Delta+H)\sim_{\Q}m(K_X+\Delta-\varepsilon S+H')\sim_{\Q}m(K_X+\Delta')$$
and this finishes the proof.
\end{proof}

\section{Applications}\label{4}

We start with a corollary of Theorem \ref{main2}. Note that \cite{PS14} and \cite{Shi16} have some results related to statements (i) and (ii) of Corollary \ref{coro1} under different assumptions.

\begin{coro}\label{coro1}
Let $f$ be a morphism from a klt pair $(X, \Delta)$ to an abelian variety $A$, $m\geq1$ a rational number and $D$ a Cartier divisor on $X$ such that $D\sim_{\Q}m(K_X+\Delta)$. Then:
\begin{enumerate}
	\item[$\mathrm{(i)}$] The sheaf $f_*\mathcal{O}_X(D)$ is a $\GV$-sheaf.

	\item[$\mathrm{(ii)}$] The cohomological support loci $V_l^i(A, f_*\mathcal{O}_X(D))$ are finite unions of torsion subvarieties of $\Pic^0(A)$ for every $i$ and $l$. 

	\item[$\mathrm{(iii)}$] The Fourier-Mukai transform $\bold{R}\hat{\mathcal{S}}\bold{R}\mathcal{H}om_{\mathcal{O}_A}(f_*\mathcal{O}_X(D), \mathcal{O}_A)$ is computed locally around each point by a linear complex of trivial vector bundles.
\end{enumerate}
\end{coro}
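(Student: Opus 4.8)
The plan is to derive all three statements from Theorem~\ref{main2}. By that theorem there is a generically finite surjective morphism $h\colon Z\to X$ from a smooth projective variety $Z$ such that $f_*\mathcal{O}_X(D)$ is a direct summand of $(f\circ h)_*\mathcal{O}_Z(K_Z)$; set $g:=f\circ h\colon Z\to A$, and fix a splitting $(f\circ h)_*\mathcal{O}_Z(K_Z)\cong f_*\mathcal{O}_X(D)\oplus\mathcal{Q}$. All three properties are already known for the pushforward of a canonical bundle under a morphism to an abelian variety: $g_*\mathcal{O}_Z(K_Z)$ is a $\GV$-sheaf by \cite{GL87, Hac04, PP11a}, its cohomological support loci are finite unions of torsion subvarieties of $\Pic^0(A)$ by \cite{GL91, Sim93, Lai11, LPS20}, and the Fourier--Mukai transform of its derived dual is computed locally by a linear complex of trivial vector bundles by \cite{LPS20}. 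Hence the only real content is that each of these properties passes from $g_*\mathcal{O}_Z(K_Z)$ to its direct summand $f_*\mathcal{O}_X(D)$.

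Statement~(i) is purely formal: the functor $\bold{R}\hat{\mathcal{S}}$ is additive, so $\bold{R}^i\hat{\mathcal{S}}\big(f_*\mathcal{O}_X(D)\big)$ is a direct summand of $\bold{R}^i\hat{\mathcal{S}}\big(g_*\mathcal{O}_Z(K_Z)\big)$ for every $i$, whence $\Supp\bold{R}^i\hat{\mathcal{S}}\big(f_*\mathcal{O}_X(D)\big)\subseteq\Supp\bold{R}^i\hat{\mathcal{S}}\big(g_*\mathcal{O}_Z(K_Z)\big)$, which has codimension $\geq i$, and the $\GV$ inequality follows. For~(iii) I would argue the same way: $\bold{R}\hat{\mathcal{S}}\bold{R}\mathcal{H}om_{\mathcal{O}_A}(f_*\mathcal{O}_X(D),\mathcal{O}_A)$ is, locally around each point, a direct summand of $\bold{R}\hat{\mathcal{S}}\bold{R}\mathcal{H}om_{\mathcal{O}_A}(g_*\mathcal{O}_Z(K_Z),\mathcal{O}_A)$, which there is a linear complex of trivial vector bundles; since a direct summand of a linear complex of trivial vector bundles is again (locally) a linear complex of trivial vector bundles, (iii) follows. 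For~(ii) I would use the additivity of cohomology under the splitting, $h^i(A,g_*\mathcal{O}_Z(K_Z)\otimes\alpha)=h^i(A,f_*\mathcal{O}_X(D)\otimes\alpha)+h^i(A,\mathcal{Q}\otimes\alpha)$ for all $\alpha\in\Pic^0(A)$. As $\mathcal{Q}$ is itself a direct summand of $g_*\mathcal{O}_Z(K_Z)$, its cohomological support loci are finite unions of torsion subvarieties as well (by the same results of \cite{LPS20}, which apply to such direct summands); stratifying $\Pic^0(A)$ into the locally closed loci on which $\alpha\mapsto h^i(A,g_*\mathcal{O}_Z(K_Z)\otimes\alpha)$ and $\alpha\mapsto h^i(A,\mathcal{Q}\otimes\alpha)$ are simultaneously constant, one sees that $V_l^i(A,f_*\mathcal{O}_X(D))$ is a finite union of closures of such strata, hence a finite union of torsion subvarieties.

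An alternative route for~(ii) and~(iii) uses Theorem~\ref{main3} in place of Theorem~\ref{main2}: given the Chen--Jiang decomposition $f_*\mathcal{O}_X(D)\cong\bigoplus_{i\in I}(\alpha_i\otimes p_i^*\mathcal{F}_i)$ with $\mathcal{F}_i$ M-regular on the abelian variety $A_i$, the Leray spectral sequence for the isotrivial fibration $p_i\colon A\to A_i$ reduces both the computation of $V_l^\bullet(A,f_*\mathcal{O}_X(D))$ and that of the derived-dual Fourier--Mukai transform to the analogous data for the $\mathcal{F}_i$ on $A_i$, and generic vanishing on each $A_i$ finishes the job. In either approach the one non-formal point is the bookkeeping assertion that ``cohomological support loci are finite unions of torsion subvarieties'' is stable under passage to direct summands — equivalently, that the structural results of \cite{LPS20} hold verbatim for direct summands of $g_*\mathcal{O}_Z(K_Z)$ (and that M-regularity of the $\mathcal{F}_i$, together with the structure already known for pushforwards of canonical bundles, forces their jump loci to be torsion). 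This is where I would expect to have to be careful; once it is granted, the remainder of the corollary is routine manipulation with $\bold{R}\hat{\mathcal{S}}$, $\bold{R}\mathcal{H}om$, and semicontinuity.
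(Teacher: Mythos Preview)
Your approach matches the paper's exactly: invoke Theorem~\ref{main2}, then cite the known results for $g_*\mathcal{O}_Z(K_Z)$ and pass to the direct summand; the paper's proof is a single sentence citing \cite{GL87,Hac04} for (i), \cite{GL91,Sim93} together with \cite[Lemma~10.3]{HPS18} for (ii), and \cite{CH02b,LPS11} for (iii). The reference \cite[Lemma~10.3]{HPS18} is precisely the summand-stability statement you flagged as the delicate point in (ii), so you can replace your stratification argument---which as written is slightly circular, since you assume the conclusion for $\mathcal{Q}$, itself only a direct summand---by that citation (or repair it by stratifying only by the jump loci of $g_*\mathcal{O}_Z(K_Z)$ and noting that upper semicontinuity of both summands forces each to be constant on every stratum).
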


\begin{proof}
By Theorem \ref{main2}, statement (i) follows from \cite{GL87} and \cite{Hac04}, (ii) follows from \cite{GL91}, \cite{Sim93} and \cite[Lemma 10.3]{HPS18} and (iii) follows from \cite{CH02b} and \cite{LPS11} which are based on \cite{GL91}.
\end{proof}

Next we prove Theorem \ref{main6}. Given a klt pair $(X, \Delta)$ and a Cartier divisor $D\sim_{\Q}m(K_X+\Delta)$ on $X$ such that $\kappa(X, D)\geq0$ where $m>1$ is a rational number, we consider the Iitaka fibration associated to $D$. After a birational modification of $X$, we may assume from the beginning that the Iitaka fibration is a morphism $f\colon X\to Y$ where $Y$ is a smooth projective variety of dimension $\kappa(X, D)$. Note that we do not assume $(X, \Delta)$ is log smooth. Since $(X, \Delta)$ is klt, $X$ is of rational singularities. Thus its Albanese variety coincides with the Albanese variety of any of its log resolution by \cite[Proposition 2.3]{Rei83} and \cite[Lemma 8.1]{Kaw85a}. Thus by the universal property of the Albanese variety, we have the following commutative diagram
	\begin{center}
	\begin{tikzcd}
			X \arrow[r, "a_X"] \arrow[d, "f"] \arrow[dr, "g"]& \Alb(X) \arrow[d, "a_f"] \\
			 Y \arrow[r, "a_Y" ] & \Alb(Y) 
	\end{tikzcd}
	\end{center}
where $\Alb(X)$ and $\Alb(Y)$ are the two Albanese varieties and $a_X$ and $a_Y$ are the two Albanese morphisms. Denote $a_Y\circ f$ by $g$. We will give an explicit description of the cohomological support locus
$$V^0(X, \mathcal{O}_X(D))=\{\alpha\in\Pic^0(X)\mid\dim H^0(X, \mathcal{O}_X(D)\otimes\alpha)\geq 1\}$$
using the Iitaka fibration $f$. The morphism $f^*\colon \Pic^0(Y)\to \Pic^0(X)$ is injective since $f$ is a fibration.

\begin{proof}[Proof of Theorem \ref{main6}]
We claim that it suffices to prove the theorem when $(X, \Delta)$ is log smooth. We take a log resolution of $(X, \Delta)$ denoted by $\mu\colon W\to X$. Then we have
$$K_W+\Delta_W\sim_\mathbb{Q}\mu^*(K_{X}+\Delta)+E,$$
where the $\Q$-divisors $\Delta_W$ and $E$ are effective and have no common components, $E$ is $\mu$-exceptional and $\Delta_W+E$ has simple normal crossings support. We know $(W, \Delta_W':=\Delta_W+\frac{\lceil mE\rceil}{m}-E)$ is log smooth and klt since $m>1$ and  
$$m(K_W+\Delta_W')\sim_\mathbb{Q}\mu^*(m(K_{X}+\Delta))+\lceil mE\rceil\sim_{\Q} \mu^*D+\lceil mE\rceil.$$
By \cite[Lemma 8.1]{Kaw85a},
$$\mu^*\colon\Pic^0(X)\to \Pic^0(W)$$
is an isomorphism since $X$ is of rational singularities. Thus it suffices to prove the theorem when $(X, \Delta)$ is log smooth by considering $\mu^*D+\lceil mE\rceil$ since $\lceil mE\rceil$ is effective and $\mu$-exceptional.

We prove statement (i) first. We use $\alpha'$ to denote the Cartier divisor corresponding to the line bundle $\alpha$. We fix the torsion point $\alpha$. Given a very ample divisor $H'$ on $\Alb(Y)$, we can find a positive integer $c$ which is sufficiently big such that $cD\sim g^*H'+B'$ where $B'$ is an effective divisor on $X$ since $g$ factors through the Iitaka fibration associated to $D$. We have that $c(D+g^*L)\sim g^*(H'+cL)+B'$ where the divisor $H'+cL$ is ample. Thus we can find a positive integer $d$ which is sufficiently big and divided by $c$ such that
$$d(D+g^*L)\sim g^*H+B$$
where $B$ is an effective divisor on $X$ and $H$ is a very ample divisor on $\Alb(Y)$.

We take a log resolution $\mu\colon W\to X$ of $(X, \Delta)$ and the linear systems $|D+g^*L+\alpha'|$ and $|p(D+g^*L)|$ where $p$ is a positive integer which is sufficiently divisible such that $\alpha^{\otimes p}$ is trivial and $p$ is divided by $d$ as in the following diagram.
	\begin{center}
	\begin{tikzcd}
		W \arrow[r, "\mu"] & X \arrow[r, "a_X"] \arrow[d, "f"] \arrow[dr, "g"]& \Alb(X) \arrow[d, "a_f"] \\
			 & Y \arrow[r, "a_Y" ] & \Alb(Y) 
	\end{tikzcd}
	\end{center}
Then we have
$$K_W+\Delta_W\sim_\mathbb{Q}\mu^*(K_{X}+\Delta)+E,$$
where the $\Q$-divisors $\Delta_W$ and $E$ are effective and have no common components and $E$ is $\mu$-exceptional. We also have that
$$\mu^*|D+g^*L+\alpha'|=|H_1|+F_1\quad \text{and} \quad \mu^*|p(D+g^*L)|=|H_p|+F_p,$$ where the linear systems $|H_1|$ and $|H_p|$ are base point free, the effective divisors $F_1$ and $F_p$ are the fixed parts of the corresponding linear systems and $\Delta_W+E+F_1+F_p$ has simple normal crossings support. We observe that $pF_1\geq F_p$ and $\frac{p}{d}\mu^*B\geq F_p$. We define a divisor $T$ as
$$T:=\bigg\lfloor \Delta_W-E+\frac{m-1}{mp}F_p\bigg\rfloor.$$
We have that $T\geq\lfloor-E\rfloor$ and thus $-T\leq\lceil E\rceil$. We also have that
$$T\leq\bigg\lfloor \Delta_W+\frac{m-1}{mp}F_p\bigg\rfloor\leq \bigg\lfloor\Delta_W+\frac{m-1}{m}F_1\bigg\rfloor\leq F_1$$
since $(W, \Delta_W)$ is klt. We denote $D+g^*L+\alpha'$ by $D'$. Thus we have
	\begin{center}
	\begin{tikzcd}
		H^0(W, \mathcal{O}_W(\mu^*D'-F_1)) \arrow[r, "a_1"] \arrow[d, "a_2"] &  H^0(W, \mathcal{O}_W(\mu^*D'-T))\arrow[d, "a_3"] \\
			  H^0(W, \mathcal{O}_W(\mu^*D')) \arrow[r, "a_4"] & H^0(W, \mathcal{O}_W(\mu^*D'+\lceil E\rceil)) 
	\end{tikzcd}
	\end{center}
where $a_2$ and $a_4$ are isomorphisms since $F_1$ is the fixed part of the linear system $\mu^*|D'|$ and $\lceil E\rceil$ is $\mu$-exceptional and effective. We also know $a_1$ and $a_3$ are injective and thus $a_1$ and $a_3$ are isomorphisms. Thus we have 
$$H^0(X, \mathcal{O}_X(D'))\cong H^0(W, \mathcal{O}_W(\mu^*D'-T)).$$
Since $\alpha$ is a torsion line bundle, we have $\alpha'\sim_{\Q}0$. For any $\varepsilon\in\Q$, we have 
$$\mu^*(D+\alpha')\sim_{\Q}\frac{1}{m}\mu^*D+(\frac{m-1}{m}-\varepsilon)\mu^*D+\varepsilon\mu^*D$$
$$\sim_{\Q}\mu^*(K_X+\Delta)+\frac{m-1-m\varepsilon}{m}(\frac{1}{p}(H_p+F_p)-\mu^*g^*L)+\varepsilon\mu^*(\frac{1}{d}(g^*H+B)-g^*L)$$
$$\sim_{\Q}\frac{\varepsilon}{d}\mu^*g^*H+K_W+\Delta_W-E+\frac{\varepsilon}{d}\mu^*B-\frac{m-1}{m}\mu^*g^*L+\frac{m-1-m\varepsilon}{mp}(H_p+F_p).$$
Thus we deduce that
$$\mu^*D'-T\sim_{\Q}\mu^*(D+g^*L+\alpha')-T$$
$$\sim_{\Q}\mu^*g^*(\frac{\varepsilon}{d}H+\frac{1}{m}L)+K_W+\Delta_W-E+\frac{\varepsilon}{d}\mu^*B+\frac{m-1-m\varepsilon}{mp}(H_p+F_p)-T.$$
Since $|H_p|$ is base point free and $\Delta_W+E+F_p$ has simple normal crossings support, we can choose a reduced and effective divisor $E_p\sim H_p$ such that $(W, \frac{m-1}{mp}E_p+\Delta_W-E+\frac{m-1}{mp}F_p-T)$ is klt by the definition of $T$. Since $\frac{p}{d}\mu^*B\geq F_p$ and $m>1$, we can choose $\varepsilon\geq0$ sufficiently small and deduce that
$$\Delta'_W(\varepsilon):=\Delta_W-E+\frac{\varepsilon}{d}\mu^*B+\frac{m-1-m\varepsilon}{mp}(E_p+F_p)-T$$
$$\geq\Delta_W-E+\frac{\varepsilon}{p}F_p+\frac{m-1-m\varepsilon}{mp}(E_p+F_p)-\bigg\lfloor \Delta_W-E+\frac{m-1}{mp}F_p\bigg\rfloor$$
$$=\frac{m-1-m\varepsilon}{mp}E_p+\Delta_W-E+\frac{m-1}{mp}F_p-\bigg\lfloor \Delta_W-E+\frac{m-1}{mp}F_p\bigg\rfloor\geq0.$$
Thus we can choose $\varepsilon>0$ sufficiently small such that $(W, \Delta'_W(\varepsilon))$ is klt since $(W, \Delta'_W(0))$ is klt and $\Delta'_W(\varepsilon)\geq0$. We identify $\Pic^0(Y)$ and $\Pic^0(\Alb(Y))$ by $a_Y^*$. We use $\beta'$ to denote the Cartier divisor corresponding to the line bundle $\beta$. We deduce that
$$\mu^*g^*\beta'+\mu^*D'-T\sim_{\Q}\mu^*g^*(\frac{\varepsilon}{d}H+\frac{1}{m}L+\beta')+K_W+\Delta'_W(\varepsilon)$$
where $\frac{\varepsilon}{d}H+\frac{1}{m}L+\beta'$ is an ample $\Q$-divisor on $\Alb(Y)$ since $\varepsilon>0$, $L$ is nef and $\beta'$ is numerically trivial. By \cite[Corollary 10.15]{Kol95}, we have that 
$$H^i(\Alb(Y), \beta\otimes R^j(g\circ\mu)_*\mathcal{O}_W(\mu^*D'-T))=0$$
for every $i>0$, every $j\geq0$ and every $\beta\in\Pic^0(Y)$. Denote the sheaf $(g\circ\mu)_*\mathcal{O}_W(\mu^*D'-T)$ by $\mathcal{F}$. We deduce that 
$$h^0(\Alb(Y), \mathcal{F})=\chi(\Alb(Y), \mathcal{F})=\chi(\Alb(Y), \beta\otimes\mathcal{F})=h^0(\Alb(Y), \beta\otimes\mathcal{F})$$
for every $\beta\in\Pic^0(Y)$.
Since $-T\leq\lceil E\rceil$, we have that
$$h^0(X, \mathcal{O}_X(D'))=h^0(W, \mathcal{O}_W(\mu^*D'-T))=h^0(\Alb(Y), \mathcal{F})$$
$$=h^0(\Alb(Y), \beta\otimes\mathcal{F})=h^0(W, \mu^*g^*\beta\otimes\mathcal{O}_W(\mu^*D'-T))$$
$$\leq h^0(W, \mu^*g^*\beta\otimes\mathcal{O}_W(\mu^*D'+\lceil E\rceil))=h^0(X, g^*\beta\otimes\mathcal{O}_X(D')).$$
Recall that $D'=D+g^*L+\alpha'$. Since the inequality above is true for every $\beta\in\Pic^0(Y)$ and every nef divisor $L$ on $\Alb(Y)$, we can replace $L$ with $L-\beta'$ and get
$$h^0(X, g^*\beta^{-1}\otimes\mathcal{O}_X(D'))\leq h^0(X, \mathcal{O}_X(D'))\leq h^0(X, g^*\beta\otimes\mathcal{O}_X(D'))$$
for every $\beta\in\Pic^0(Y)$. Thus we conclude that for every $\beta\in\Pic^0(Y)$
$$h^0(X, \mathcal{O}_X(D+g^*L)\otimes\alpha)=h^0(X, \mathcal{O}_X(D+g^*L)\otimes\alpha\otimes f^*\beta).$$

We now prove statement (ii). We choose a positive integer $N$ sufficiently divisible such that $h^0(F, \mathcal{O}_F(ND|_{F}))>0$ where $F$ is the general fiber of $f$ such that $(F, \Delta|_{F})$ is a log smooth klt pair. We have $ND|_{F}\sim_{\Q}Nm(K_F+\Delta|_{F})$. The cohomological support locus 
$$V^0(F, \mathcal{O}_F(ND|_{F}))=\{\gamma\in\Pic^0(F)\mid\dim H^0(F, \mathcal{O}_F(ND|_{F})\otimes\gamma)\geq 1\}$$ 
is a finite union of torsion subvarieties of $\Pic^0(F)$ by Corollary \ref{coro1}. We claim that if $\alpha\in V^0(X, \mathcal{O}_X(D))$, then $(\alpha|_{F})^{\otimes N}\cong \mathcal{O}_F$ if $F$ is sufficiently general. Since $\alpha\in V^0(X, \mathcal{O}_X(D))$, we deduce that $(\alpha|_{F})^{\otimes N}\in V^0(F, \mathcal{O}_F(ND|_{F}))$ if $F$ is sufficiently general. If $(\alpha|_{F})^{\otimes N}$ is not trivial, then $V^0(F, \mathcal{O}_F(ND|_{F}))$ contains two different points since it contains $\mathcal{O}_F$ and thus it contains a torsion point $\gamma$ which is not trivial by the structure of $V^0(F, \mathcal{O}_F(ND|_{F}))$. However, $\gamma$ can only be trivial by $\kappa(F, D|_{F})=0$ and it is a contradiction. We conclude that $(\alpha|_{F})^{\otimes N}\cong \mathcal{O}_F$ and by \cite[Lemma 2.6]{CH04} we know that a nonzero multiple of $\alpha$ belongs to $f^*\Pic^0(Y)$. Thus there exist countably many torsion points $\alpha_j\in\Pic^0(X)$ such that
$$V^0(X, \mathcal{O}_X(D))\subseteq\bigcup_{j\in J}(\alpha_j\otimes f^*\Pic^0(Y))$$
where $J$ is a countable index set. Since $V^0(X, \mathcal{O}_X(D))$ is a closed subset of $\Pic^0(X)$, it can be decomposed as a union of irreducible components
$$V^0(X, \mathcal{O}_X(D))=\bigcup_{k\in K}Z_k$$
where $K$ is a finite index set. We deduce that
$$Z_k=\bigcup_{j\in J}(Z_k\cap(\alpha_j\otimes f^*\Pic^0(Y))).$$
Note that we are only considering closed points of varieties now. Since an irreducible variety over an uncountable and algebraically closed field cannot be a countable union of proper closed subsets, we deduce that $Z_k\subseteq\alpha_{j_k}\otimes f^*\Pic^0(Y)$ for some $j_k\in J$. By statement (i) of our theorem, we have 
$$Z_k\subseteq\alpha_{j_k}\otimes f^*\Pic^0(Y)\subseteq V^0(X, \mathcal{O}_X(D))$$
and thus $Z_k=\alpha_{j_k}\otimes f^*\Pic^0(Y)$ since $Z_k$ is an irreducible component of $V^0(X, \mathcal{O}_X(D))$. Then we deduce that 
$$V^0(X, \mathcal{O}_X(D))=\bigcup_{k\in K}(\alpha_{j_k}\otimes f^*\Pic^0(Y)).$$
\end{proof}

We are ready to prove Theorem \ref{main4} now.

\begin{proof}[Proof of Theorem \ref{main4}]
By the same argument of Theorem \ref{main6}, we can assume $(X, \Delta)$ is log smooth. By Theorem \ref{main3}, we know $(a_X)_*\mathcal{O}_X(lD)$ admits the Chen-Jiang decomposition for every positive integer $l$
$$(a_X)_*\mathcal{O}_X(lD)\cong \bigoplus_{i\in I}(\alpha_i\otimes p_i^*\mathcal{G}_i)$$
where each $A_i$ is an abelian variety, each $p_i\colon \Alb(X)\to A_i$ is a fibration, each $\mathcal{G}_i$ is a nonzero M-regular coherent sheaf on $A_i$, and each $\alpha_i\in\Pic^0(X)$ is a torsion line bundle whose order is bounded independently of $l$. We identify $\Pic^0(X)$ and $\Pic^0(\Alb(X))$ by $a_X^*$. By \cite[Lemma 3.3]{LPS20}, we have that
$$V^0(X, \mathcal{O}_X(lD))=\bigcup_{i\in I}(\alpha^{-1}_i\otimes p_i^*\Pic^0(A_i)).$$
We know that $V^0(X, \mathcal{O}_X(lD))$ is a finite union of torsion translates of $(a_f)^*\Pic^0(Y)$ by Theorem \ref{main6} and thus for every $i\in I$, we have a factorization
	\begin{center}
	\begin{tikzcd}
		\Alb(X) \arrow[r, "a_f"] \arrow[rr, bend right, "p_i"]& \Alb(Y) \arrow[r, "q_i"] & A_i
	\end{tikzcd}
	\end{center}
where each $q_i$ is a fibration since each $p_i$ is a fibration. We define $\mathcal{F}_i=q_i^*\mathcal{G}_i$ and then we have
$$(a_X)_*\mathcal{O}_X(lD)\cong \bigoplus_{i\in I}(\alpha_i\otimes a_f^*\mathcal{F}_i).$$
We claim that each $\mathcal{F}_i$ satisfies $\IT$. Fix a torsion point $\alpha\in\Pic^0(X)$. By Theorem \ref{main6}, $h^0(X, \mathcal{O}_X(lD)\otimes\alpha\otimes f^*\beta)$ is constant for every $\beta\in\Pic^0(Y)$. We know
$$h^0(X, \mathcal{O}_X(lD)\otimes\alpha\otimes f^*\beta)=\sum_{i\in I}h^0(\Alb(X), \alpha\otimes\alpha_i\otimes a_f^*\mathcal{F}_i\otimes a_f^*\beta)$$
and thus each term on the right is constant for every $\beta\in\Pic^0(Y)$ since those terms are upper semi-continuous functions with respect to $\beta$. In particular, we can take $\alpha$ to be $\alpha_i^{-1}$ and deduce that $h^0(\Alb(X), a_f^*\mathcal{F}_i\otimes a_f^*\beta)$ is constant for every $\beta\in\Pic^0(Y)$. By the base change theorem, we know that
$$h^0(\Alb(X), a_f^*\mathcal{F}_i\otimes a_f^*\beta)=h^0(\Alb(Y), \mathcal{F}_i\otimes\beta)$$
since $a_f$ is a fibration by \cite[Lemma 2.6]{CH04}. Thus $h^0(\Alb(Y), \mathcal{F}_i\otimes\beta)$ is constant for every $\beta\in\Pic^0(Y)$. Next we prove that each $\mathcal{F}_i$ is a GV-sheaf. We can take an isogeny $k\colon B\to\Alb(X)$ such that each $k^*\alpha_i$ is trivial and consider the following commutative diagram for the fiber product $X':=X\times_{\Alb(X)}B$.
	\begin{center}
	\begin{tikzcd}
			X' \arrow[r, "p"] \arrow[d, "q"] & B \arrow[d, "k"] \arrow[dr, "k'"]\\
			 X \arrow[r, "a_X" ] & \Alb(X) \arrow[r, "a_f"] &\Alb(Y)
	\end{tikzcd}
	\end{center}
The morphisms $p$ and $q$ are the projections and $q$ is \'{e}tale. Consider the log smooth klt pair $(X', \Delta')$ defined by $K_{X'}+\Delta'=q^*(K_{X}+\Delta)$. By Corollary \ref{coro1}, we deduce that each $\mathcal{F}_i$ is a GV-sheaf. Then each $\mathcal{F}_i$ satisfies $\IT$ by Proposition \ref{IT}.
\end{proof}

The next corollary is a special case of Theorem \ref{main4}, which applies to klt pairs of log general type.

\begin{coro}\label{ITL}
Let $f\colon X\to Y$ be a smooth model of the Iitaka fibration associated to a Cartier divisor $D$ on $X$ where $D\sim_{\Q}m(K_X+\Delta)$, $m>1$ is a rational number, $Y$ is smooth and $(X, \Delta)$ is a klt pair. Let $a_X\colon X\to \Alb(X)$ be the Albanese morphism of $X$. If $q(X)=q(Y)$ and $(a_X)_*\mathcal{O}_X(D)\neq0$, then:
\begin{enumerate}
	\item[$\mathrm{(i)}$] $V^0(X, \mathcal{O}_X(D))=\Pic^0(X)$.

	\item[$\mathrm{(ii)}$] $(a_X)_*\mathcal{O}_X(D)$ satisfies $\IT$, hence in particular it is ample. 
\end{enumerate}
\end{coro}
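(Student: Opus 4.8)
The plan is to derive both parts from Theorem~\ref{main4} applied with $l=1$, the hypothesis $q(X)=q(Y)$ entering only through the observation that $a_f\colon\Alb(X)\to\Alb(Y)$ is an isogeny. So I would first record this. The homomorphism $a_f$ is induced by the surjective morphism $f$, so its image is a closed subgroup of $\Alb(Y)$ containing the image of $a_Y$, which generates $\Alb(Y)$; hence $a_f$ is surjective, and since $\dim\Alb(X)=q(X)=q(Y)=\dim\Alb(Y)$ it is an isogeny. Dually, $f^*\colon\Pic^0(Y)\to\Pic^0(X)$ is an injective homomorphism of abelian varieties of equal dimension, hence an isomorphism, so $f^*\Pic^0(Y)=\Pic^0(X)$.

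For (ii), I would invoke Theorem~\ref{main4} with $l=1$ to write $(a_X)_*\mathcal{O}_X(D)\cong\bigoplus_{i\in I}(\alpha_i\otimes a_f^*\mathcal{F}_i)$ with each $\mathcal{F}_i$ a coherent sheaf on $\Alb(Y)$ satisfying $\IT$ and each $\alpha_i\in\Pic^0(X)$ torsion. The key point is that pullback along the isogeny $a_f$ preserves $\IT$: the dual isogeny $\widehat{a_f}$, which on $\Pic^0$ is the pullback map $a_f^*\colon\Pic^0(\Alb(Y))\to\Pic^0(\Alb(X))$, is again an isogeny and hence surjective, so every $\alpha\in\Pic^0(\Alb(X))$ equals $a_f^*\beta$ for some $\beta$; then, using $(a_f)_*\mathcal{O}_{\Alb(X)}\cong\bigoplus_{\gamma\in\Ker\widehat{a_f}}\gamma$, we get $H^j(\Alb(X),a_f^*\mathcal{F}_i\otimes\alpha)\cong\bigoplus_{\gamma\in\Ker\widehat{a_f}}H^j(\Alb(Y),\mathcal{F}_i\otimes\beta\otimes\gamma)=0$ for $j>0$, the last equality by $\IT$ of $\mathcal{F}_i$. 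Since twisting by a degree-zero line bundle and forming finite direct sums preserve $\IT$, the sheaf $(a_X)_*\mathcal{O}_X(D)$ satisfies $\IT$; being nonzero by hypothesis it is then M-regular, hence ample by \cite[Corollary~3.2]{Deb06}.

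For (i), the projection formula gives $H^0(X,\mathcal{O}_X(D)\otimes\alpha)\cong H^0(\Alb(X),(a_X)_*\mathcal{O}_X(D)\otimes\alpha)$ for every $\alpha\in\Pic^0(X)=\Pic^0(\Alb(X))$, so $V^0(X,\mathcal{O}_X(D))=V^0(\Alb(X),(a_X)_*\mathcal{O}_X(D))$. By (ii) the sheaf $\mathcal{G}:=(a_X)_*\mathcal{O}_X(D)$ is nonzero and satisfies $\IT$, so $h^0(\Alb(X),\mathcal{G}\otimes\alpha)=\chi(\mathcal{G})$ for every $\alpha$, and $\chi(\mathcal{G})>0$ since the Fourier-Mukai transform of $\mathcal{G}$ is then a nonzero locally free sheaf of rank $\chi(\mathcal{G})$ (cf.\ Proposition~\ref{IT}). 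Hence $V^0(X,\mathcal{O}_X(D))=\Pic^0(X)$.

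The argument is short, and the expected obstacle is mild: the one place requiring care is the isogeny reduction, which is exactly where $q(X)=q(Y)$ is used --- without it $a_f^*\mathcal{F}_i$ would be pulled back from a lower-dimensional abelian variety and would fail to be M-regular. I also note an alternative route to (ii) that bypasses the explicit decomposition of Theorem~\ref{main4}: by Corollary~\ref{coro1}(i) the sheaf $(a_X)_*\mathcal{O}_X(D)$ is a $\GV$-sheaf, and Theorem~\ref{main6}(i), applied with $L=0$ and $\alpha=\mathcal{O}_X$ and combined with $f^*\Pic^0(Y)=\Pic^0(X)$, shows that $h^0(\Alb(X),(a_X)_*\mathcal{O}_X(D)\otimes\alpha)$ is independent of $\alpha$; Proposition~\ref{IT} then yields $\IT$ directly.
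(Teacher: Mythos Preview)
Your proof is correct. The paper's argument and yours share the same backbone---both hinge on Theorem~\ref{main4} and on the observation that $q(X)=q(Y)$ forces $a_f$ to be an isomorphism (the paper) or at least an isogeny (you)---but the organization differs. The paper proves (i) first, using Corollary~\ref{coro1}(i) to see that $V^0$ is nonempty and then Theorem~\ref{main6}(ii) to see that $V^0$ is a union of translates of $f^*\Pic^0(Y)=\Pic^0(X)$; it then obtains (ii) by invoking Theorem~\ref{main4} directly, the point being that when $a_f$ is an isomorphism the pullback $a_f^*\mathcal{F}_i$ trivially inherits $\IT$. You instead prove (ii) first, giving the (slightly more general) verification that pullback along an isogeny preserves $\IT$, and then derive (i) from $\IT$ via the constancy and positivity of $h^0$. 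Your alternative route at the end, using Theorem~\ref{main6}(i) together with Proposition~\ref{IT}, is in fact closest in spirit to what the paper does. One small remark: since you already note that $f^*\colon\Pic^0(Y)\to\Pic^0(X)$ is an isomorphism, it follows by duality that $a_f$ itself is an isomorphism, not merely an isogeny; so your isogeny computation, while correct, is more than you need.
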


\begin{proof}
By the same argument of Theorem \ref{main6}, we can assume $(X, \Delta)$ is log smooth. Since $(a_X)_*\mathcal{O}_X(D)\neq0$ is a GV-sheaf by Corollary \ref{coro1}, we deduce $V^0(X, \mathcal{O}_X(D))$ is not empty by \cite[Lemma 7.4]{HPS18}. Since $q(X)=q(Y)$, the fibration $a_f$ is an isomorphism. Then statement (i) follows from Theorem \ref{main6}. By Theorem \ref{main4}, $(a_X)_*\mathcal{O}_X(D)$ satisfies $\IT$. It is ample by \cite[Proposition 2.13]{PP03} and \cite[Corollary 3.2]{Deb06}.
\end{proof}

Next we use Corollary \ref{ITL} to give effective bounds for the generation of klt pairs of log general type on irregular varieties. This extends some results in \cite{PP03, PP11b, LPS20} to klt pairs. Theorem \ref{main5} follows from Corollary \ref{ITL} and standard arguments. 

\begin{thm}\label{main5}
Let $f\colon X\to Y$ be a smooth model of the Iitaka fibration associated to a Cartier divisor $D$ on $X$ where $D\sim_{\Q}m(K_X+\Delta)$, $m>1$ is a rational number, $Y$ is smooth and $(X, \Delta)$ is a klt pair. Let $a_X\colon X\to \Alb(X)$ be the Albanese morphism of $X$. Assume that $q(X)=q(Y)$ and there exists a nonempty open subset $W\subseteq\Alb(X)$ consisting of points $a$ such that $\mathcal{O}_{X_a}(D)$ is globally generated and the natural map
$$(a_X)_*\mathcal{O}_{X}(D)\otimes\C(a)\to H^0(X_a, \mathcal{O}_{X_a}(D))$$
is surjective. Then:
\begin{enumerate}
	\item[$\mathrm{(i)}$] $\mathcal{O}_{X}(2D)\otimes\alpha$ is globally generated on $a_X^{-1}(W)$ for every $\alpha\in\Pic^0(X)$.

	\item[$\mathrm{(ii)}$] Assuming in addition that for every $x\in a_X^{-1}(W)$ and every $a\in W$, $\mathcal{O}_{X_a}(2D)\otimes\mathcal{I}_{x}|_{X_a}$ is globally generated and the natural map
$$\quad (a_X)_*(\mathcal{O}_{X}(2D)\otimes\mathcal{I}_{x})\otimes\C(a)\to H^0(X_a, \mathcal{O}_{X_a}(2D)\otimes\mathcal{I}_{x}|_{X_a})$$
is surjective, then $\mathcal{O}_{X}(3D)\otimes\alpha$ is very ample on $a_X^{-1}(W)$ for every $\alpha\in\Pic^0(X)$.
\end{enumerate}
\end{thm}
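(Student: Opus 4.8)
The plan is to combine Corollary \ref{ITL} with the Pareschi--Popa argument for effective global generation and very ampleness on the abelian variety $\Alb(X)$. Write $\mathcal{F}:=(a_X)_*\mathcal{O}_X(D)$, $\mathcal{F}_2:=(a_X)_*\mathcal{O}_X(2D)$, $\mathcal{F}_3:=(a_X)_*\mathcal{O}_X(3D)$. The first step is to note that all three satisfy $\IT$. We may assume $a_X^{-1}(W)\ne\emptyset$ (otherwise there is nothing to prove), so there is $a\in W$ with $X_a\ne\emptyset$, and then $\mathcal{O}_{X_a}(D)$ is globally generated, hence nonzero, so $\mathcal{F}\ne0$; since $X$ is integral, taking powers of a section of $\mathcal{O}_X(D)$ over a suitable open set gives $\mathcal{F}_2,\mathcal{F}_3\ne0$ as well. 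As $2D\sim_{\Q}2m(K_X+\Delta)$ and $3D\sim_{\Q}3m(K_X+\Delta)$ with $2m,3m>1$, as $f$ is again a smooth model of the Iitaka fibration associated to $2D$ and to $3D$, and as $q(X)=q(Y)$, Corollary \ref{ITL}(ii) applies to each; in particular $\mathcal{F}$, $\mathcal{F}_2$, $\mathcal{F}_3$ are M-regular, hence continuously globally generated by Pareschi--Popa. I identify $\Pic^0(X)$ with $\Pic^0(\Alb(X))$ via $a_X^*$ and write $\alpha=a_X^*\alpha_0$.

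To prove (i) I would fix $x\in a_X^{-1}(W)$, set $a:=a_X(x)$, and first show that
$$U:=\{\gamma\in\Pic^0(\Alb(X)):x\notin\mathrm{Bs}(\mathcal{O}_X(D)\otimes a_X^*\gamma)\}$$
is a dense open subset. It is open because $h^0(X,\mathcal{O}_X(D)\otimes a_X^*\gamma)=h^0(\Alb(X),\mathcal{F}\otimes\gamma)$ is independent of $\gamma$ (projection formula and $\IT$), so the spaces of sections form a vector bundle over $\Pic^0(\Alb(X))$, evaluation at $x$ is a morphism from it to a line bundle, and its zero locus is closed. It is dense because the hypotheses on $W$ give a section of $\mathcal{O}_{X_a}(D)$ nonzero at $x$ and a surjection $\mathcal{F}\otimes\C(a)\twoheadrightarrow H^0(X_a,\mathcal{O}_{X_a}(D))$, so by continuous global generation of $\mathcal{F}$ that section is a combination of restrictions to $X_a$ of sections of $\mathcal{O}_X(D)\otimes a_X^*\gamma_k$ with the $\gamma_k$ in any prescribed nonempty open subset, and one such $\gamma_k$ must lie in $U$. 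Now, given $\alpha_0$, both $U$ and $\alpha_0\cdot U^{-1}$ are dense open, so I pick $\gamma_1\in U\cap(\alpha_0\cdot U^{-1})$, set $\gamma_2:=\alpha_0\gamma_1^{-1}\in U$, choose $s_i\in H^0(X,\mathcal{O}_X(D)\otimes a_X^*\gamma_i)$ nonzero at $x$, and observe that $s_1\otimes s_2$ is a section of $\mathcal{O}_X(2D)\otimes\alpha$ nonzero at $x$. Letting $x$ vary over $a_X^{-1}(W)$ gives (i).

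For (ii) the plan is the same scheme, writing a section of $\mathcal{O}_X(3D)\otimes\alpha$ as a product of a section of $\mathcal{O}_X(2D)\otimes a_X^*\gamma_2$ carrying the relevant ideal-sheaf constraint and a section of $\mathcal{O}_X(D)\otimes a_X^*\gamma_1$ with $\gamma_1\gamma_2=\alpha_0$, and using that $\mathcal{O}_X(3D)\otimes\alpha$ is very ample on $a_X^{-1}(W)$ if and only if it separates points and tangent vectors there. To separate two points $x\ne y$ of $a_X^{-1}(W)$ in a common fibre $X_a$, or a tangent direction at such an $x$, I would invoke the additional hypothesis, which says precisely that $\mathcal{O}_{X_a}(2D)\otimes\mathcal{I}_x|_{X_a}$ is globally generated and that $(a_X)_*(\mathcal{O}_X(2D)\otimes\mathcal{I}_x)\otimes\C(a)$ surjects onto its sections; since $(a_X)_*(\mathcal{O}_X(2D)\otimes\mathcal{I}_x)$ is M-regular too --- M-regularity being preserved under twisting by the ideal sheaf of a point, cf.~\cite{PP03,PP11b} --- the density argument of (i) yields that $\mathcal{O}_X(2D)\otimes a_X^*\gamma\otimes\mathcal{I}_x$ has a section nonzero at $y$ (respectively generating $\mathfrak m_x/\mathfrak m_x^2$ in the prescribed direction) for $\gamma$ in a dense open set, and multiplying by a section of $\mathcal{O}_X(D)\otimes a_X^*(\alpha_0\gamma^{-1})$ nonvanishing at $y$ and symmetrising in $x$ and $y$ produces the required sections of $\mathcal{O}_X(3D)\otimes\alpha$; points in distinct fibres are treated by first producing a section of $\mathcal{O}_X(2D)\otimes a_X^*\gamma_2$ vanishing along the whole fibre $a_X^{-1}(a_X(x))$ but not at $y$, again reducing to (i).

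The cohomological input --- that the pushforwards satisfy $\IT$ --- will be the easy part, since it is Corollary \ref{ITL}. I expect the main obstacle to be the bookkeeping translating positivity on $\Alb(X)$ into generation properties on $a_X^{-1}(W)$: verifying that the ``good twist'' loci $U$ and their analogues are dense and open, which is where the hypotheses on $W$ and continuous global generation genuinely enter, and, for (ii), organising the separation of points in distinct fibres and of tangent directions while keeping the twisted pushforwards M-regular. All of this follows the Pareschi--Popa line of \cite{PP03,PP11b,LPS20}, but it requires care.
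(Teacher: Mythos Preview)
Your approach is essentially the same as the paper's: both invoke Corollary \ref{ITL} to obtain $\IT$ for the relevant pushforwards and then run the Pareschi--Popa continuous global generation argument, following \cite[Theorem 12.2]{LPS20}. Your openness/density argument for $U$ in part (i) is an explicit unpacking of what the paper packages as ``the same argument of \cite[Proposition 2.12]{PP03}'' (the paper takes a square root $\beta^{\otimes2}\cong\alpha$ rather than a factorisation $\alpha_0=\gamma_1\gamma_2$, but these are equivalent formulations).

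One place where your justification is looser than the paper's: in part (ii) you claim that $(a_X)_*(\mathcal{O}_X(2D)\otimes\mathcal{I}_x)$ is M-regular because ``M-regularity is preserved under twisting by the ideal sheaf of a point''. This is not a direct citation from \cite{PP03,PP11b}: the ideal sheaf $\mathcal{I}_x$ lives on $X$, not on $\Alb(X)$, so the sheaf in question is not simply $\mathcal{F}_2\otimes\mathcal{I}_{a_X(x)}$, and in any case such preservation requires a global generation input. The paper makes the dependence on part (i) explicit: since $\mathcal{O}_X(2D)\otimes\alpha$ is globally generated at $x$ for every $\alpha\in\Pic^0(X)$ by (i), the pushforward of $0\to\mathcal{O}_X(2D)\otimes\mathcal{I}_x\to\mathcal{O}_X(2D)\to\C_x\to0$ stays short exact, and then the long exact sequence together with $\IT$ for $\mathcal{F}_2$ and surjectivity of $H^0(\mathcal{F}_2\otimes\alpha)\to\C$ gives $\IT$ for $(a_X)_*(\mathcal{O}_X(2D)\otimes\mathcal{I}_x)$. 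With this in hand the paper simply reruns the argument of (i) uniformly, so your separate case analysis for points in distinct fibres is not needed.
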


\begin{proof}
It follows from the same proof of \cite[Theorem 12.2]{LPS20} and we include some details for the reader's convenience. We can assume that $a_X^{-1}(W)$ is not empty. We can identify $\Pic^0(X)$ and $\Pic^0(\Alb(X))$ using $a_X^*$ by \cite[Lemma 8.1]{Kaw85a}. We prove statement (i) first. We know $(a_X)_*\mathcal{O}_{X}(D)\neq0$ by assumptions and thus it satisfies $\IT$ by Corollary \ref{ITL}. Take a $\beta\in\Pic^0(X)$ such that $\beta^{\otimes2}\cong\alpha$. Then $(a_X)_*(\mathcal{O}_{X}(D)\otimes\beta)$ satisfies $\IT$. By \cite[Proposition 2.13]{PP03}, it is continuously globally generated which means that there exists an integer $N$ such that for general line bundles $\alpha_1, \dots, \alpha_N\in\Pic^0(\Alb(X))$ the sum of the twisted evaluation morphisms
$$\bigoplus_{i=1}^NH^0(\Alb(X), (a_X)_*(\mathcal{O}_{X}(D)\otimes\beta)\otimes\alpha_i)\otimes\alpha_i^{-1}\to (a_X)_*(\mathcal{O}_{X}(D)\otimes\beta)$$
is surjective. We know $a_X^*(a_X)_*\mathcal{O}_{X}(D)\to\mathcal{O}_{X}(D)$ is surjective on $a_X^{-1}(W)$ by assumptions. Thus 
$$\bigoplus_{i=1}^NH^0(X, \mathcal{O}_{X}(D)\otimes\beta\otimes a_X^*\alpha_i)\otimes a_X^*\alpha_i^{-1}\to \mathcal{O}_{X}(D)\otimes\beta$$
is surjective on $a_X^{-1}(W)$. We conclude that 
$$\mathcal{O}_{X}(2D)\otimes\alpha\cong(\mathcal{O}_{X}(D)\otimes\beta)^{\otimes2}$$
is generated by global sections on $a_X^{-1}(W)$ by the same argument of \cite[Proposition 2.12]{PP03}.

We prove statement (ii) now. By a similar argument as above and the additional assumptions, we only need to prove $(a_X)_*(\mathcal{O}_{X}(2D)\otimes\mathcal{I}_{x})$ satisfies $\IT$ for every $x\in a_X^{-1}(W)$. By Corollary \ref{ITL}, $(a_X)_*(\mathcal{O}_{X}(2D))$ satisfies $\IT$. Since $\mathcal{O}_{X}(2D)$ is generated by global sections on $a_X^{-1}(W)$ by statement (i) of our theorem, we can prove that $(a_X)_*(\mathcal{O}_{X}(2D)\otimes\mathcal{I}_{x})$ satisfies $\IT$ for every $x\in a_X^{-1}(W)$ by a standard argument using exact sequences.
\end{proof}

Corollary \ref{coro5} follows from Theorem \ref{main5} directly.

\begin{proof}[Proof of Corollary \ref{coro5}]
It follows from Corollary \ref{ITL} and Theorem \ref{main5} since the klt pair $(X, \Delta)$ is of log general type and $a_X$ is generically finite onto its image.
\end{proof}

	\bibliographystyle{amsalpha}
	\bibliography{biblio}	

\end{document}